\newcommand\g{{\mathfrak g}}
\newcommand\h{{\mathfrak h}}
\renewcommand{\b}{\mathfrak{b}}
\newcommand\gl{\mathfrak{gl}}
\renewcommand{\a}{\mathfrak{a}}
\newcommand\m{\mathfrak m}
\renewcommand\l{\mathfrak l}
\newcommand\n{\mathfrak n}
\newcommand\q{\mathfrak q}
\newcommand\z{\mathfrak z}
\newcommand\s{\mathfrak s}
\renewcommand\u{{\mathfrak u}}
\newcommand\skewperp{\angle}
\renewcommand{\t}{\mathfrak{t}}
\newcommand\tr{\operatorname{tr}}
\newcommand\im{\operatorname{im}}
\newcommand\Spec{\operatorname{Spec}}
\newcommand\Aut{\operatorname{Aut}}
\newcommand\C{\mathbb C}
\newcommand\X{\mathfrak X}
\newcommand\R{\mathbb R}
\newcommand\Z{\mathbb Z}
\newcommand\A{\mathfrak A}
\newcommand\Rad{\operatorname{R}}
\renewcommand\O{\operatorname{O}}
\renewcommand\sl{\mathfrak{sl}}
\newcommand\so{\mathfrak{so}}
\renewcommand\sp{\mathfrak{sp}}
\newcommand\spin{\mathfrak{spin}}
\newcommand\GL{\mathop{\rm GL}\nolimits}
\newcommand\SL{\mathop{\rm SL}\nolimits}
\newcommand\Sp{\mathop{\rm Sp}\nolimits}
\newcommand\SO{\mathop{\rm SO}\nolimits}
\newcommand\Span{\operatorname{Span}}
\newcommand\Hom{\operatorname{Hom}}
\newcommand{\ad}{\mathop{\rm ad}\nolimits}
\newcommand{\Ad}{\mathop{\rm Ad}\nolimits}
\renewcommand{\mod}{\mathop{\rm mod}\nolimits}
\newcommand{\rank}{\mathop{\rm rk}\nolimits}
\newcommand{\cork}{\mathop{\rm cork}\nolimits}
\renewcommand{\Ad}{\mathop{\rm Ad}\nolimits}
\newcommand\quo{/\!/}
\newtheorem{Thm}{Theorem}[subsection]
\newtheorem{Prop}[Thm]{Proposition}
\newtheorem{Cor}[Thm]{Corollary}
\newtheorem{Lem}[Thm]{Lemma}
\theoremstyle{definition}
\newtheorem{Ex}[Thm]{Example}
\newtheorem{defi}[Thm]{Definition}
\newtheorem{Rem}[Thm]{Remark}
\newtheorem{Alg}[Thm]{Algorithm}
\numberwithin{equation}{section}
\numberwithin{table}{section} \oddsidemargin=0cm
\author{Ivan V. Losev}
\title{Computation of  weight lattices of  $G$-varieties}
\thanks{{\it Key words and phrases}: reductive groups,  homogeneous spaces, weight
lattice, root lattice} \thanks{{\it 2000 Mathematics Subject
Classification.} 14M17, 14R20}
\begin{document}
\begin{abstract}
Let $G$ be a connected reductive group. To any irreducible
$G$-variety $X$ one assigns the lattice generated by all weights
of $B$-semiinvariant rational functions on $X$, where $B$ is a
Borel subgroup of $G$. This lattice is called the weight lattice
of $X$. We establish algorithms for computing weight lattices for
homogeneous spaces and affine homogeneous vector bundles. For
affine homogeneous spaces of rank $\rank G$ we present a more or
less explicit computation.
\end{abstract}
\maketitle \tableofcontents
\section{Introduction}
Throughout the paper the base field is $\C$.

Let $G$ be a connected reductive group  and $X$ be a normal
irreducible $G$-variety. Choose a Borel subgroup $B\subset G$ and a
maximal torus $T\subset B$.

Consider the action of $G$ on the field of rational functions
$\C(X)$. The set of $B$-\!\! eigencharacters of this action form a
subgroup in the character lattice $\X(B)$ of $B$. This subgroup is
called the {\it weight lattice} of $X$ and denoted by $\X_{G,X}$. In
other words,
$$\X_{G,X}=\{\lambda\in \X(B)| \exists f_\lambda\in \C(X), b.f_\lambda=\lambda(b)f_\lambda\}.$$
The weight lattice is an important invariant of a $X$. It is used,
for instance, in the equivariant embedding theory of Luna and Vust,
\cite{LV}. By the rank  of $X$ (denoted by $\rank_G(X)$) we mean the
rank of $\X_{G,X}$.

The goal of this paper is compute or, rather, develop an
algorithm computing the lattice $\X_{G,X}$ for some classes of
$G$-varieties. The classes in interest are homogeneous spaces $G/H$
($H$ is an algebraic subgroup of $G$) and affine homogeneous vector
bundle $G*_HV$ ($H$ is a reductive subgroup of $G$ and $V$ is an
$H$-module).

The basic result in the computation of the weight lattices for
$G$-varieties is the reduction procedure due to Panyushev,
\cite{Panyushev2}. It reduces the computation of the weight lattice
for $X$ to that for an {\it affine} homogeneous space (in fact,
together with some auxiliary datum~-- a point from  a so-called
distinguished component). For affine homogeneous spaces $X$ the
spaces $\a_{G,X}:=\X_{G,X}\otimes_\Z\C$ were computed in
\cite{ranks}. Moreover, in \cite{comb_ham} the author found a way to
reduce the computation of $\X_{G,X}$ to the case when
$\rank_G(X)=\rank(G)$. We would like to note that this reduction
traces back to another paper of Panyushev, \cite{Panyushev3}.

So, essentially, the only original result of this paper is the
computation of $\X_{G,X}$ for  affine homogeneous spaces $X=G/H$
with $\rank_G(X)=\rank(G)$. The main result here is very technical
Theorem \ref{Thm:7.0.5}.

The most important step in the computation of $\X_{G,G/H}$ is
computing a smaller  lattice, the {\it root lattice}
$\Lambda_{G,G/H}$ established by Knop, \cite{Knop8}. In a sense,
$\Lambda_{G,G/H}$ is an "essential" part of $\X_{G,G/H}$. The
advantage of $\Lambda_{G,G/H}$ over $\X_{G,G/H}$ is a much better
behavior. For example, $\Lambda_{G,G/H}$ depends only on the Lie
algebra $\h$ of $H$. Further, it is generated by a certain root
system in $\a_{G,G/H}$. The Weyl group of that root system is the so
called {\it Weyl group} $W_{G,G/H}$ of $G/H$. All groups $W_{G,G/H}$
were computed in author's preprint \cite{Weyl}. The main result of
the computation of $\Lambda_{G,G/H}$, Theorem \ref{Thm:7.0.2} is
much less technical than Theorem \ref{Thm:7.0.5}.

As in the computation of Weyl groups in \cite{Weyl}, the main
ingredient in the computation of the lattices $\Lambda_{G,G/H}$ is
the theory of Hamiltonian actions of reductive groups developed in
\cite{Knop1},\cite{slice},\cite{alg_hamil},\cite{comb_ham},\cite{fibers}.

Let us describe briefly the content of the paper. In Section
\ref{SECTION_notation} we introduce conventions and the list of
notation used in the paper. In Section \ref{SECTION_prelim} we
review some known results and constructions related to weight and
root lattices, including reductions of
\cite{Panyushev2},\cite{comb_ham} discussed above. Section
\ref{SECTION_Ham} is devoted to the study of the root lattices of
Hamiltonian actions. In   Section \ref{SECTION_root} we state and
prove our main results, Theorems \ref{Thm:7.0.2},\ref{Thm:7.0.5}.
Finally, in Section \ref{SECTION_algorithm} we briefly quote an
algorithm for computing the weight lattices of homogeneous spaces
and affine homogeneous vector bundles. Each of Sections
\ref{SECTION_prelim}-\ref{SECTION_root} is divided into subsections,
the first subsection of each of these sections describes its content
in more detail.

\section{Notation and conventions}\label{SECTION_notation}
For an algebraic group denoted by a capital Latin letter we denote
its Lie algebra by the  corresponding small German letter. For
example, the Lie algebra of  $\widetilde{L}_{0}$ is denoted by
$\widetilde{\l}_0$.

{\bf $H$-morphisms, $H$-subvarieties, etc.} Let $H$ be an algebraic
group. We say that a variety $X$ is an $H$-variety if an action of
$H$ on $X$ is given. By an $H$-subset (resp., subvariety) in a given
$H$-variety we mean an $H$-stable subset (resp., subvariety). A
morphism  of $H$-varieties is said to be an $H$-morphism if it is
$H$-equivariant.

{\bf Borel subgroups and maximal tori.} While considering a
reductive group $G$, we always fix its Borel subgroup $B$ and a
maximal torus $T\subset B$. In accordance with this choice, we fix
the root system $\Delta(\g)$ and the system of simple roots
$\Pi(\g)$ of $\g$. Let $U$ denote the unipotent radical of $B$ so that
$B=T\rightthreetimes U$. %The Borel subgroup of $G$ containing $T$ and
%opposite to $B$ is denoted by $B^-$.

If $G_1,G_2$ are reductive groups with  fixed Borel subgroups
$B_i\subset G_i$ and maximal tori $T_i\subset B_i$, then we take
$B_1\times B_2,T_1\times T_2$ for the fixed Borel subgroup and
maximal torus in $G_1\times G_2$.

Suppose $G_1$ is a reductive algebraic group. Fix an embedding
$\g_1\hookrightarrow \g$ such that $\t\subset \n_\g(\g_1)$. Then
$\t\cap\g_1$ is a Cartan subalgebra and $\b\cap\g_1$ is a Borel
subalgebra of $\g_1$. For fixed Borel subgroup and maximal torus in
$G_1$ we take those with the Lie algebras $\b_1,\t_1$.

{\bf Homomorphisms and representations.} All homomorphisms of
reductive algebraic Lie algebras (for instance, representations) are
assumed to be differentials of homomorphisms of the corresponding
algebraic groups.

{\bf Identification $\g\cong \g^*$}. Let $G$ be a reductive
algebraic group. There is a $G$-invariant symmetric bilinear form
$(\cdot,\cdot)$ on $\g$ such that its restriction to $\t(\R)$ is
positively definite. For instance, if $V$ is a locally effective
$G$-module, then $(\xi,\eta)=\tr_V(\xi\eta)$ has the required
properties. Note that if $H$ is a reductive subgroup of $G$, then
the restriction of $(\cdot,\cdot)$ to $\h$ is nondegenerate, so one
may identify $\h$ with $\h^*$.

%{\it іростvе корни и фундаментальнvе веса.} +сли $\g$~--- простая
%алгебра Tи, то $\alpha_i$ обозначает еї $i$-vй простой корень, и
%$\pi_i$~--- соответствуіий фундаментальнvй вес. -ля корней и весов
%мv используем обозначения из~\cite{VO}.

{\bf Parabolic subgroups and Levi subgroups}. A parabolic subgroup
of $G$ is called  {\it antistandard} if it contains the Borel
subgroup
 $B^-$ that contains $T$ and is opposite to $B$. It is known that any parabolic subgroup is $G$-conjugate
to a unique  antistandard one. Antistandard parabolics are in
one-to-one correspondence with subsets of $\Pi(\g)$. Namely, one
assigns to $\Sigma\subset \Pi(\g)$ the  antistandard parabolic
subgroup, whose Lie algebra is generated by  by $\b^-$ and
$\g^\alpha,\alpha\in \Sigma$.

By a standard Levi subgroup in $G$ we mean the Levi subgroup
containing $T$ of  an antistandard parabolic subgroup.

%Tставить описание классических алгебр.
{\bf Simple Lie algebras, their roots and weights.} Simple roots of
a simple Lie algebra $\g$ are denoted by $\alpha_i$. The numeration
is described below. By $\pi_i$ we denote the fundamental weight
corresponding to $\alpha_i$.

{\it Classical algebras}. In all cases for $\b$ (resp. $\t$) we take
the algebra of all upper triangular (resp., diagonal) matrices in
$\g$.

$\g=\sl_n$. Let $e_1,\ldots,e_n$ denote the standard basis in
$\C^{n}$ and $e^1,\ldots,e^n$ the dual basis in $\C^{n*}$.  Choose
the generators $\varepsilon_i, i=\overline{1,n},$ of $\t^*$ given by
$\langle\varepsilon_i,\operatorname{diag}(x_1,\ldots,x_n)\rangle=x_i$.
Put $\alpha_i=\varepsilon_i-\varepsilon_{i+1}, i=\overline{1,n-1}$.

$\g=\so_{2n+1}$. Let $e_1,\ldots,e_{2n+1}$ be the standard basis in
$\C^{2n+1}$. We suppose  $\g$ annihilates the form
$(x,y)=\sum_{i=1}^{2n+1}x_iy_{2n+2-i}$. Define
$\varepsilon_i\in\t^*, i=\overline{1,n},$ by $\langle \varepsilon_i,
\operatorname{diag}(x_1,\ldots,x_n,0,-x_n,\ldots,-x_1)\rangle$
$=x_i$. Put $\alpha_i=\varepsilon_i-\varepsilon_{i+1},
i=\overline{1,n-1}, \alpha_n=\varepsilon_n$.

$\g=\sp_{2n}$. Let $e_1,\ldots,e_{2n}$ be the standard basis in
$\C^{2n}$. We suppose that $\g$ annihilates the form
$(x,y)=\sum_{i=1}^{n}(x_iy_{2n+1-i}-y_ix_{2n+1-i})$. Let us define
$\varepsilon_i\in\t^*, i=\overline{1,n},$ by $\langle \varepsilon_i,
\operatorname{diag}(x_1,\ldots,x_n,-x_n,\ldots,-x_1)\rangle=x_i$.
Put $\alpha_i=\varepsilon_i-\varepsilon_{i+1}, i=\overline{1,n-1},
\alpha_n=2\varepsilon_n$.

$\g=\so_{2n}$. Let $e_1,\ldots,e_{2n}$ be the standard basis in
$\C^{2n}$. We suppose that $\g$ annihilates the form
$(x,y)=\sum_{i=1}^{2n}x_iy_{2n+1-i}$. Define $\varepsilon_i\in\t^*,
i=\overline{1,n},$ in the same way as for $\g=\sp_{2n}$. Put
$\alpha_i=\varepsilon_i-\varepsilon_{i+1}, i=\overline{1,n-1},
\alpha_n=\varepsilon_{n-1}+\varepsilon_n$.

{\it Exceptional algebras}. For roots and weights of exceptional Lie
algebras we use the notation from \cite{VO}. The numeration of
simple roots is also taken from \cite{VO}.

{\bf Subalgebras in semisimple Lie algebra.} For semisimple
subalgebras of exceptional Lie algebras we use the notation from
\cite{Dynkin}. Below we explain the notation for classical algebras.

Suppose $\g=\sl_n$. By $\sl_k,\so_k,\sp_{k}$ we denote the
subalgebras of $\sl_n$ annihilating a subspace  $U\subset \C^n$
 of dimension $n-k$, leaving its complement $V$ invariant, and (for
$\so_k,\sp_k$) annihilating a nondegenerate orthogonal or symplectic
form on $V$.

The subalgebras $\so_k\subset\so_n, \sp_k\subset \sp_n$ are defined
analogously. The subalgebra $\gl_k^{diag}$ is embedded into
$\so_n,\sp_n$ via the direct sum of $\tau,\tau^*$ and a trivial
representation (here $\tau$ denotes the tautological representation
of  $\gl_k$). The subalgebras
$\sl_k^{diag},\so_k^{diag},\sp_k^{diag}\subset \so_n,\sp_n$ are
defined analogously. The subalgebra $G_2$ (resp., $\spin_7$) in
$\so_n$ is the image of $G_2$ (resp., $\so_7$) under the direct sum
of the 7-dimensional irreducible (resp., spinor) and the trivial
representations.

Finally, let $\h_1,\h_2$ be subalgebras of $\g=\sl_n,\so_n,\sp_n$
described above. While writing $\h_1\oplus\h_2$, we always mean that
$(\C^n)^{\h_1}+(\C^n)^{\h_2}=\C^n$.

The description above determines a subalgebra uniquely up to
conjugacy in $\Aut(\g)$.

Now we list some notation used in the text.

\begin{longtable}{p{3.3cm} p{11.7cm}}
\\$\sim_G$& the equivalence relation induced by an action of group
$G$.
\\$A^{(B)}$& the subset of all $B$-semiinvariant functions in a $G$-algebra $A$.\\
$A^\times$& the group of all invertible elements of an algebra $A$.
\\ $\Aut(\g)$& the group of
automorphisms of a Lie algebra $\g$.
\\ $\Aut^G(X)$& the group of $G$-automorphisms of a $G$-variety $X$.
\\$e_\alpha$& a nonzero element of the root subspace $\g^\alpha$.
\\ $(G,G)$& the commutant of a group $G$.
\\ $[\g,\g]$& the commutant of a Lie algebra $\g$.
\\ $G^{\circ}$& the connected component of unit of an algebraic group $G$.
\\ $G*_HV$& the homogeneous bundle over $G/H$ with  fiber $V$.
\\ $[g,v]$& the equivalence class of $(g,v)$ in $G*_HV$.
\\ $G_x$& the stabilizer of $x\in X$ under an action
$G:X$.
\\ $\g^{\alpha}$& the root subspace of $\g$ corresponding to a root  $\alpha$.
\\ $\g^{(A)}$& the subalgebra $\g$ generated by $\g^{\alpha}$ with $\alpha\in A\cup
-A$.
\\ $G^{(A)}$& the connected subgroup of $G$ with Lie algebra
$\g^{(A)}$.
%\\$\Gr(V,d)$& the Grassmanian of $d$-dimensional subspaces of a
%vector space $V$.
%\\ $\Int(\h)$& the group of inner automorphisms of a Lie algebra
%$\g$.
\\ $m_G(X)$&$:=\max_{x\in X}\dim Gx$.
\\ $N_G(H)$ ($N_G(\h)$)& the normalizer of a  subgroup $H$ (subalgebra $\h\subset\g$) in
a group $G$.\\ $\n_\g(\h)$& the normalizer  of a subalgebra $\h$ in
a Lie algebra $\g$.
%\\ $\Quot(A)$& the fraction field of $A$.
\\ $\rank(G)$& the rank of an algebraic group $G$.
%\\ $R(\lambda)$& the irreducible representation of a reductive algebraic group (or a
%reductive Lie algebra) corresponding to a highest weight $\lambda$.
\\ $\Rad_u(H)$ ($\Rad_u(\h)$)& the unipotent radical of an
algebraic group $H$  (of an algebraic Lie algebra $\h$).
\\
 $s_\alpha$& the reflection in a Euclidian space corresponding to a vector $\alpha$.
%\\ $\td A$& the transcendence degree of an algebra $A$.
%\\ $U^{\skewperp}$& the skew-orthogonal complement to a subspace $U\subset V$ of
%a symplectic vector space $V$.
\\ $V^\g$& $=\{v\in V| \g v=0\}$, where $\g$ is a Lie algebra and
$V$ is a $\g$-module.
%\\ $V_{(\lambda)}$& изотипная компонента $\g$-модуля
%$V$ ($\g$~--- редуктивная алгебра Tи), соответствуіая старшему весу
%$\lambda$.
\\ $V(\lambda)$& the irreducible module of the highest weight $\lambda$
over a reductive algebraic group or a reductive Lie algebra.\\
$W(\g)$& the Weyl group of a reductive Lie algebra $\g$.\\
$\X(G)$& the character lattice of an algebraic group $G$.\\
$\X_{G}$& the weight lattice of a reductive algebraic group $G$.\\
$X^G$& the fixed point set for an action $G:X$.\\
$X\quo G$& the categorical quotient for an action $G:X$, where $G$
is a reductive group and $X$ is an affine  $G$-variety.\\
$\#X$& the number of elements in a set $X$.
\\   $Z_G(H)$, ($Z_G(\h)$)& the centralizer of a subgroup $H$ (of a subalgebra $\h\subset\g$)
in an algebraic group $G$.
\\ $Z(G)$&$:=Z_G(G)$.
\\ $\z_\g(\h)$& the centralizer of a subalgebra $\h$ in $\g$.
\\ $\z(\g)$& $:=\z_\g(\g)$.
\\ $\alpha^\vee$& the dual root to $\alpha$.
\\  $\Delta(\g)$& the root system of a reductive Lie algebra $\g$.
\\ $\lambda^*$& the dual highest weight to $\lambda$.
\\ $\Lambda(\g)$& the root lattice of a reductive Lie algebra $\g$.
\\ $\Lambda_{G,X}$& the root lattice of a $G$-variety $X$.
\\ $\xi_s,\xi_n$& semisimple and nilpotent parts of an element $\xi$
in an algebraic Lie algebra.
\\ $\Pi(\g)$& the system of simple roots for a reductive Lie algebra $\g$.
\\$\pi_{G,X}$& the (categorical) quotient morphism $X\rightarrow X\quo G$.
\end{longtable}

\section{Known results and constructions}\label{SECTION_prelim}
\subsection{Introduction} This section does not contain new results.
 In the first subsection we quote
definitions and basic properties of central automorphisms and root
lattices of $G$-varieties. Our exposition is based mostly on
\cite{Knop8}. In the second subsection we show how to reduce the
computation of $\X_{G,X}$ to the case when $X$ is an affine
homogeneous space of rank $\rank G$. The reduction is based on
results of  \cite{Panyushev2},\cite{comb_ham}.
\subsection{Central automorphisms and root lattices}
Let $G,X$ be such as in Introduction.

The following definition was given in~\cite{Knop8}.
\begin{defi}\label{Def:1.3.1}
A $G$-\!\! automorphism   $\varphi$ of  $X$ is said to be {\it
central} if for any $\lambda\in \X_{G,X}$ the automorphism $\varphi$
acts on $\C(X)^{(B)}_\lambda$ by a constant. Central automorphisms
of $X$ form a group denoted by
$\mathfrak{A}_{G}(X)$\index{agx@$\mathfrak{A}_{G}(X)$}.
\end{defi}

However, the group $\mathfrak{A}_{G}(X)$ has a disadvantage   not to
be a birational invariant of $X$. This problem is fixed as follows.
By \cite{Knop8}, Theorem 5.1,  any  open $G$-subvariety $X^0\subset
X$ is stable with respect to $\mathfrak{A}_G(X)$ and there is the
inclusion $\mathfrak{A}_{G}(X)\subset \mathfrak{A}_G(X^0)$. It turns
out that there is a unique maximal group of the form
$\mathfrak{A}_{G}(X^0)$ (\cite{Knop8}, Corollary 5.4). We denote
this group by $\mathfrak{A}_{G,X}$\index{agx@$\A_{G,X}$}.

\begin{Lem}[\cite{comb_ham}, Lemma 7.17]\label{Lem:1.3.2}
If $X$ is quasiaffine, then $\A_{G,X}=\A_{G}(X)$.
\end{Lem}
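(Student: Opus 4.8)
The plan is to establish the reverse inclusion $\A_{G,X}\subseteq\A_G(X)$; the inclusion $\A_G(X)\subseteq\A_{G,X}$ holds for arbitrary $X$ since $X$ is an open $G$-subvariety of itself, so $\A_G(X)$ is one of the groups $\A_G(X^0)$ whose maximum is $\A_{G,X}$ (Corollary 5.4 of \cite{Knop8}). By that same corollary $\A_{G,X}=\A_G(X^0)$ for some open $G$-subvariety $X^0\subseteq X$, so it is enough to prove $\A_G(X^0)\subseteq\A_G(X)$ for \emph{every} open $G$-subvariety $X^0\subseteq X$. Since $X$ is quasiaffine, I would fix a $G$-equivariant open embedding $X\hookrightarrow Y$ into an affine $G$-variety $Y$ (one may take $Y=\Spec\C[X]$ when $\C[X]$ is finitely generated, and in general the closure of $X$ under a $G$-equivariant locally closed embedding into a $G$-module). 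Then $X$, and hence $X^0$, is an open $G$-subvariety of $Y$; in particular $\C(Y)=\C(X^0)$ and $\X_{G,Y}=\X_{G,X^0}$, and Theorem 5.1 of \cite{Knop8} applied to $X\subseteq Y$ gives $\A_G(Y)\subseteq\A_G(X)\subseteq\A_G(X^0)$. Thus everything reduces to the inclusion $\A_G(X^0)\subseteq\A_G(Y)$, after which the three groups coincide and $\A_{G,X}=\A_G(X)$.

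To prove $\A_G(X^0)\subseteq\A_G(Y)$, take $\varphi\in\A_G(X^0)$ and let $\varphi^*$ be the induced $G$-equivariant automorphism of the field $\C(X^0)=\C(Y)$. Decomposing into $G$-isotypic components, $\C(Y)=\bigoplus_\mu\C(Y)_{(\mu)}$ (the sum over dominant weights), the space of highest weight vectors of $\C(Y)_{(\mu)}$ is $\C(Y)^{(B)}_\mu$, which is nonzero exactly when $\C(Y)_{(\mu)}\neq 0$, and then $\mu\in\X_{G,X^0}$. By the definition of a central automorphism, $\varphi^*$ multiplies $\C(Y)^{(B)}_\mu$ by a scalar $c_\mu\in\C^\times$; since $\varphi^*$ is $G$-equivariant and $\C(Y)_{(\mu)}$ is spanned by the $G$-orbit of its highest weight vectors, $\varphi^*$ multiplies all of $\C(Y)_{(\mu)}$ by $c_\mu$. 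Hence $\varphi^*$ preserves the $G$-stable subalgebra $\C[Y]=\bigoplus_\mu\bigl(\C[Y]\cap\C(Y)_{(\mu)}\bigr)$, and the same applies to $(\varphi^*)^{-1}=(\varphi^{-1})^*$, so $\varphi^*$ restricts to a $G$-equivariant algebra automorphism of $\C[Y]$. As $Y$ is affine, this is the comorphism of a $G$-automorphism $\bar\varphi$ of $Y$; $\bar\varphi$ induces $\varphi^*$ on $\C(Y)=\operatorname{Frac}\C[Y]$, hence restricts to $\varphi$ on $X^0$, and it multiplies $\C(Y)^{(B)}_\lambda$ by $c_\lambda$ for each $\lambda\in\X_{G,Y}$, so it is central. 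Therefore $\varphi=\bar\varphi\in\A_G(Y)$, as needed.

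The only non-formal ingredient is the $G$-equivariant embedding of $X$ into an affine $G$-variety, and this is precisely where the hypothesis is used: for $X$ merely quasiaffine $\C[X]$ need not be finitely generated, so one cannot simply take $\Spec\C[X]$, and one appeals instead to the standard fact that a quasiaffine $G$-variety admits a $G$-equivariant locally closed embedding into a $G$-module. Everything else is routine: the isotypic-component bookkeeping is formal, and the passage from an algebra automorphism of $\C[Y]$ back to a morphism of $Y$ is automatic exactly because $Y$ is affine, which is the point of the reduction (over a non-quasiaffine compactification a central automorphism need not be everywhere defined). Alternatively one can bypass $Y$ and argue on $X$ directly: cover $X$ by principal affine open subsets $D(f)$ with $f$ lying in a single $G$-isotypic component of the coordinate ring; then $\varphi^*(f)$ is a scalar multiple of $f$, so $\varphi$ maps $D(f)$ to itself and, being the restriction of the algebra automorphism $\varphi^*$ of $\C[D(f)]$, extends across $D(f)$, and these extensions glue to the desired $G$-automorphism of $X$.
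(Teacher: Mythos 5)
The paper itself gives no proof of this lemma --- it is quoted from \cite{comb_ham}, Lemma 7.17 --- so there is no in-text argument to compare against; judged on its own, your proof is correct in substance and isolates the right mechanism: the entire role of quasiaffineness is that a central $G$-automorphism of any open $G$-subvariety $X^0\subset X$ preserves the coordinate ring of an affine model $Y$ containing $X$ as an open $G$-subvariety, hence extends to $Y$ and then restricts to an element of $\A_G(X)$. One assertion is genuinely false as written, though: $\C(Y)$ does \emph{not} decompose as $\bigoplus_\mu\C(Y)_{(\mu)}$, because the $G$-action on the field of rational functions is not locally finite (for $\C^\times$ scaling the line, $1/(1+t)$ lies in no finite-dimensional $\C^\times$-stable subspace of $\C(t)$). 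Your argument survives with that sentence deleted: all you need is that the sum of the simple $G$-submodules of $\C(Y)$ of highest weight $\mu$ is generated by its $B$-semiinvariants of weight $\mu$ and is therefore scaled by $c_\mu$, and that $\C[Y]$, being a rational $G$-module, \emph{is} the direct sum of its isotypic components --- both true, and together they give $\varphi^*(\C[Y])=\C[Y]$. Two further points deserve a word. First, the affine model should be taken normal (normalize the closure of $X$ under a $G$-equivariant embedding into a $G$-module; since $X$ is normal it remains open in the normalization, which is still affine), so that the results of \cite{Knop8} you invoke, in particular the stability of the open subvariety $X\subset Y$ under $\A_G(Y)$ needed to restrict $\bar\varphi$ to $X$, apply without worry. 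Second, the closing ``alternative'' argument on $X$ itself is weaker than the main one: for a quasiaffine $X$ the sets $\{f\neq 0\}$ with $f\in\C[X]$ need not be affine, and the cover by principal opens really comes from the ambient affine $Y$ anyway; keep the argument through $Y$.
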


Set $A_{G,X}:=\Hom(\X_{G,X},\C^\times)$. The group $\A_{G,X}$ is
embedded into $A_{G,X}$ as follows. We assign
$a_{\varphi,\lambda}\in\C^\times$ to $\varphi\in \A_{G,X},\lambda\in
\X_{G,X}$ by $\varphi f_\lambda=a_{\varphi,\lambda}f_\lambda$, $f\in
\C(X)^{(B)}_\lambda$. The map
$\iota_{G,X}\index{zzzi@$\iota_{G,X}$}:\A_{G,X}\rightarrow A_{G,X}$
is defined by $\lambda(\iota_{G,X}(\varphi))=a_{\varphi,\lambda}$.
Clearly, $\iota_{G,X}$ is a well-defined group homomorphism.

\begin{Prop}[\cite{Knop8}, Theorem 5.5]\label{Prop:1.3.3} The map $\iota_{G,X}$ is injective
and its image is closed.
\end{Prop}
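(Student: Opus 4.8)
The plan is to show that $\iota_{G,X}$ is injective with closed image by reducing to a concrete geometric situation. First I would recall how $\A_{G,X}$ acts geometrically: after Lemma~\ref{Lem:1.3.2} one may replace $X$ by a suitable quasiaffine open $G$-subvariety $X^0$ with $\A_{G,X}=\A_G(X^0)$, and then by further shrinking, arrange that $X^0$ carries a geometric quotient $\pi:X^0\to X^0/G$ (or at least that there is a well-behaved open piece on which the $B$-semiinvariants separate $\A_{G,X}$-orbits). The key point is that a central automorphism $\varphi$ preserves every fiber of the rational quotient map $X\dashrightarrow X/\!/G$ up to the torus action, so $\varphi$ is encoded faithfully by the tuple of scalars $(a_{\varphi,\lambda})_{\lambda\in\X_{G,X}}$, which is precisely the data of the point $\iota_{G,X}(\varphi)\in A_{G,X}=\Hom(\X_{G,X},\C^\times)$.

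For injectivity, suppose $\iota_{G,X}(\varphi)=1$, i.e.\ $\varphi f_\lambda=f_\lambda$ for all $f_\lambda\in\C(X)^{(B)}_\lambda$ and all $\lambda\in\X_{G,X}$. Since $\C(X)^B=\C(X/\!/G)$ is generated (as a field) by ratios of $B$-semiinvariants of the same weight, $\varphi$ acts trivially on $\C(X)^B$, hence fixes the generic fiber of the quotient pointwise at the level of invariants. Combined with $G$-equivariance and the fact that on a generic fiber $G$ acts with a dense orbit whose isotropy pins down the fiber, one concludes $\varphi=\id$ on a dense open set, hence $\varphi=\id$. This is essentially Knop's argument in \cite{Knop8}, Theorem~5.5, and I would cite the needed separation statement (the $B$-semiinvariants generate $\C(X)$ over $\C(X)^B$ in the appropriate localized sense) from there.

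For closedness of the image, the strategy is to exhibit $\iota_{G,X}(\A_{G,X})$ as the kernel of characters of $A_{G,X}$, equivalently as the common zero locus inside the torus $A_{G,X}$ of a family of Laurent-monomial equations. Concretely, $\A_{G,X}$ acts on the (finitely many, up to scalar) $B$-semiinvariant generators of a model algebra for $X^0$, and the relations among these generators force $\prod a_{\varphi,\lambda_i}^{m_i}=1$ whenever $\sum m_i\lambda_i=0$ is a relation realized by an actual product identity among semiinvariants; but in fact one needs the converse direction, that these are the only constraints, which is exactly what makes the image a closed subgroup (an intersection of subtori and their translates) rather than merely a subgroup. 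I would phrase this as: the image is the set of $a\in A_{G,X}$ such that the twisted algebra automorphism defined by $a$ on the graded algebra $\bigoplus_\lambda \C(X)^{(B)}_\lambda$ preserves multiplication, and this is plainly a closed condition.

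The main obstacle I anticipate is not injectivity, which is fairly formal, but pinning down closedness cleanly without re-deriving Knop's structure theory: one must ensure that $\A_{G,X}$ really is \emph{all} of the closed subgroup cut out by the monomial relations, i.e.\ that every such $a$ actually integrates to a central automorphism of some open $G$-subvariety. Handling that requires invoking the maximality in Corollary~5.4 of \cite{Knop8} (that $\A_{G,X}=\A_G(X^0)$ for the optimal $X^0$) together with the fact that twisting the quotient by an element of this torus produces a genuine $G$-variety birational to $X$. Since all of this is available in \cite{Knop8}, in the write-up I would keep the argument short, attributing the substantive input to that reference and only spelling out the reduction via Lemma~\ref{Lem:1.3.2} and the explicit description of $\iota_{G,X}$.
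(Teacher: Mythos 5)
First, note that the paper does not prove this statement at all: it is quoted verbatim from \cite{Knop8}, Theorem 5.5, in the section explicitly labelled as containing no new results. So the only question is whether your sketch is itself sound, and it has two genuine gaps.

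On injectivity: your reduction "$\varphi$ fixes $\C(X)^B$, hence preserves generic fibers, and the isotropy pins down the fiber" does not close the argument. Knowing that $\varphi$ preserves a generic orbit $Gx\cong G/G_x$ only tells you that $\varphi$ acts there as some element of $N_G(G_x)/G_x$; to conclude that this element is trivial from the fact that $\varphi$ fixes all $B$-semiinvariants is precisely the homogeneous-space case of the proposition, so the step is circular. (Also $\C(X)^B\neq \C(X\quo G)$; the latter is $\C(X)^G$.) The standard way to make injectivity precise is different: pass to a quasiaffine model $X^0$ with $\A_{G,X}=\A_G(X^0)$ (Lemma \ref{Lem:1.3.2} and Corollary 5.4 of \cite{Knop8}), observe that every simple $G$-submodule of $\C[X^0]$ is generated by a $B$-eigenvector, and apply Schur's lemma: a $G$-endomorphism fixing a highest-weight vector is the identity on the simple module it generates, so $\varphi$ fixes $\C[X^0]$ pointwise and $\varphi=\id$.

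On closedness the proposal is not just incomplete but identifies the wrong condition. Every $a\in A_{G,X}=\Hom(\X_{G,X},\C^\times)$ acts on $\bigoplus_\lambda\C(X)^{(B)}_\lambda$ by multiplying the $\lambda$-component by $a(\lambda)$, and this \emph{always} preserves multiplication because $a(\lambda)a(\mu)=a(\lambda+\mu)$; your "plainly closed condition" therefore cuts out all of $A_{G,X}$. But the image of $\iota_{G,X}$ is in general a \emph{proper} closed subgroup: by Definition \ref{Def:1.4.1} its annihilator is the root lattice $\Lambda_{G,X}$, and the entire point of this paper is that $\Lambda_{G,X}$ is frequently nonzero (e.g. $\Lambda(\sl_n,\so_n)=2\Lambda(\g)$ in Table \ref{Tbl:7.0.3}). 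If your characterization were correct, $\Lambda_{G,X}$ would vanish identically. The genuine obstruction to $a$ lying in the image is that the twist by $a$ of the $B$-semiinvariant algebra must extend to a $G$-automorphism of all of $\C(X)$ (equivalently of $\C[X^0]$), not merely of the semiinvariant subalgebra, and your final paragraph, by asserting that every such $a$ "integrates to a central automorphism", again forces the image to be everything. Closedness has to be obtained otherwise, e.g.\ by realizing $\A_G(X^0)$ as an algebraic subgroup of the diagonalizable group acting on a finite-dimensional $G$- and $\A$-stable generating subspace of $\C[X^0]$, which is how the argument in \cite{Knop8} proceeds.
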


The following lemma justifies the term  "central":

\begin{Lem}[\cite{Knop8}, Corollary 5.6]\label{Lem:1.3.4}
$\A_{G}(X)$ is contained in the center of $\Aut^G(X)$.
\end{Lem}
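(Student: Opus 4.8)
The plan is to deduce the statement from the injectivity of $\iota_{G,X}$ (Proposition~\ref{Prop:1.3.3}): I would show that $\A_G(X)$ is normalized by $\Aut^G(X)$ and that the induced conjugation action of $\Aut^G(X)$ on $\A_G(X)$ is trivial, which is exactly the assertion of the Lemma. The only elementary input needed is that every $\psi\in\Aut^G(X)$, acting on $\C(X)$ as a $\C$-algebra automorphism commuting with the $G$-action, preserves each weight space $\C(X)^{(B)}_\lambda$ for $\lambda\in\X_{G,X}$: from $b.f=\lambda(b)f$ one gets $b.(\psi f)=\psi(b.f)=\lambda(b)\psi f$, and the same for $\psi^{-1}$.

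Next I would carry out the one computation. Fix $\varphi\in\A_G(X)$ and $\psi\in\Aut^G(X)$ and put $\varphi':=\psi\varphi\psi^{-1}\in\Aut^G(X)$. Given $\lambda\in\X_{G,X}$ and $f\in\C(X)^{(B)}_\lambda$, the previous observation gives $\psi^{-1}f\in\C(X)^{(B)}_\lambda$, so by Definition~\ref{Def:1.3.1} one has $\varphi(\psi^{-1}f)=a_{\varphi,\lambda}\psi^{-1}f$, and hence, using $\C$-linearity of $\psi$, $\varphi'(f)=\psi(a_{\varphi,\lambda}\psi^{-1}f)=a_{\varphi,\lambda}f$. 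Thus $\varphi'$ again acts by a scalar on each $\C(X)^{(B)}_\lambda$; that is, $\varphi'\in\A_G(X)$ and $a_{\varphi',\lambda}=a_{\varphi,\lambda}$ for every $\lambda\in\X_{G,X}$.

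To conclude I would invoke injectivity. Since $\A_G(X)\subset\A_{G,X}$ and $\C(X)^{(B)}_\lambda=\C(X^0)^{(B)}_\lambda$ for the distinguished open $G$-subvariety $X^0$ with $\A_{G,X}=\A_G(X^0)$, the scalars $a_{\varphi,\lambda}$ computed on $X$ agree with those occurring in the definition of $\iota_{G,X}$; hence the homomorphism $\A_G(X)\to A_{G,X}$, $\varphi\mapsto(\lambda\mapsto a_{\varphi,\lambda})$, is the restriction of $\iota_{G,X}$ and is injective by Proposition~\ref{Prop:1.3.3}. The equalities $a_{\varphi',\lambda}=a_{\varphi,\lambda}$ then force $\varphi'=\varphi$, i.e. $\psi$ commutes with $\varphi$; as $\psi\in\Aut^G(X)$ was arbitrary, $\varphi$ lies in the center of $\Aut^G(X)$.

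Everything past the cited results is formal. What would ordinarily be the main obstacle — that a central automorphism acting trivially on all the spaces $\C(X)^{(B)}_\lambda$ must be the identity — is precisely the content of Proposition~\ref{Prop:1.3.3}, which I assume and do not reprove. The only point requiring a little care is the bookkeeping in the last paragraph, namely identifying the map $\varphi\mapsto(a_{\varphi,\lambda})_\lambda$ on $\A_G(X)$ with a restriction of $\iota_{G,X}$, which amounts to passing between $X$ and the birational model $X^0$ underlying the definition of $\A_{G,X}$.
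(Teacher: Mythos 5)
Your argument is correct: the conjugate $\psi\varphi\psi^{-1}$ acts on each $\C(X)^{(B)}_\lambda$ by the same scalar as $\varphi$, and the injectivity of $\iota_{G,X}$ (Proposition~\ref{Prop:1.3.3}) then forces $\psi\varphi\psi^{-1}=\varphi$. The paper itself gives no proof here but simply cites \cite{Knop8}, Corollary 5.6, and your derivation is exactly the standard one used there, so there is nothing to add beyond confirming that the bookkeeping between $X$ and the open model $X^0$ (which you handle via $\C(X)=\C(X^0)$) is indeed the only point needing care.
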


%\begin{Lem}[\cite{Knop8}, Theorem 5.9]\label{Lem:1.3.6}
%For $x\in X$ in general position the orbit $Gx$ is
%$\A_{G,X}$-stable, the image of the restriction map
% $\A_{G,X}\rightarrow \Aut^G(Gx)$ lies in $\A_{G,Gx}$, and the corresponding map $\A_{G,X}\rightarrow
% \A_{G,Gx}$  is an isomorphism.
%\end{Lem}

\begin{defi}\label{Def:1.4.1}
The lattice  $\Lambda_{G,X}\index{zzzl@$\Lambda_{G,X}$}\subset
\X_{G,X}=\X(A_{G,X})$ consisting of all characters annihilating
$\iota_{G,X}(\A_{G,X})$ is called the {\it root lattice} of $X$.
\end{defi}

Since the image of $Z(G)$ in $\Aut^G(X)$ lies in $ \A_{G,X}$, we see
that $\Lambda_{G,X}\subset \Lambda(\g)$.

\begin{Prop}[\cite{Knop8}, Theorem 6.3]\label{Prop:1.4.4}
Let $X_1,X_2$ be irreducible  $G$-\!\! varieties and
$\varphi:X_1\rightarrow X_2$ a dominant generically finite
$G$-morphism. Then $\Lambda_{G,X_1}=\Lambda_{G,X_2}$.
\end{Prop}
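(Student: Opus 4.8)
The plan is to reduce the equality of root lattices to a statement about central automorphisms, using the characterization of $\Lambda_{G,X}$ as the annihilator of $\iota_{G,X}(\A_{G,X})$ inside $\X_{G,X}$. The first step is to observe that a dominant generically finite $G$-morphism $\varphi\colon X_1\to X_2$ induces an inclusion of fields $\varphi^*\colon\C(X_2)\hookrightarrow\C(X_1)$, which is $G$-equivariant and of finite degree. Consequently $\X_{G,X_2}\subset\X_{G,X_1}$, and in fact these two lattices have the same rank (a $B$-semiinvariant $f\in\C(X_1)^{(B)}_\lambda$ has some power $f^m$ lying in $\varphi^*\C(X_2)$, since $\C(X_1)$ is a finite extension of $\varphi^*\C(X_2)$ and the $B$-action is compatible, so $m\lambda\in\X_{G,X_2}$). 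Thus $\X_{G,X_1}/\X_{G,X_2}$ is finite and after tensoring with $\C$ the two weight spaces $\a_{G,X_i}$ coincide; the only issue is the finite-index discrepancy between the lattices, and we must show it becomes invisible after passing to the root sublattices.

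The second step is to relate $\A_{G,X_1}$ and $\A_{G,X_2}$. Here I would use the birational-invariance setup recalled just before the proposition: both $\A_{G,X_i}$ are computed on a common open $G$-subvariety, so we may replace $X_1,X_2$ by suitable $G$-stable open subsets and assume $\varphi$ is, say, a finite morphism (or at least that $\A_{G,X_i}=\A_G(X_i')$ for the chosen models). A central $G$-automorphism of $X_2$ can be lifted along the finite covering $\varphi$ to a central $G$-automorphism of $X_1$ on a possibly smaller open set, and conversely a central automorphism of $X_1$ descends; the key mechanism is that $\varphi^*\C(X_2)^{(B)}$ generates a finite-index sublattice of weights, and a central automorphism is by definition determined by its scalar action on each $\C(X)^{(B)}_\lambda$. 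This should yield that the images $\iota_{G,X_1}(\A_{G,X_1})$ and $\iota_{G,X_2}(\A_{G,X_2})$ correspond to each other under the natural map $A_{G,X_1}\to A_{G,X_2}$ dual to $\X_{G,X_2}\hookrightarrow\X_{G,X_1}$, up to a finite kernel/cokernel.

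The third step is the lattice-theoretic conclusion. A character $\lambda\in\X_{G,X_2}$ lies in $\Lambda_{G,X_2}$ iff it annihilates $\iota_{G,X_2}(\A_{G,X_2})$; using the correspondence from Step 2 and the fact that $\iota_{G,X}$ has closed image (Proposition \ref{Prop:1.3.3}), this holds iff the induced character on $A_{G,X_1}$ annihilates $\iota_{G,X_1}(\A_{G,X_1})$, i.e.\ iff $\lambda\in\Lambda_{G,X_1}$. This gives $\Lambda_{G,X_2}=\Lambda_{G,X_1}\cap\X_{G,X_2}$. To finish, I need that $\Lambda_{G,X_1}$ is actually contained in $\X_{G,X_2}$: since $\Lambda_{G,X_1}$ is generated by a root system (this structural fact, due to Knop, is what makes root lattices well-behaved and is precisely the property exploited throughout the paper), and since the ambient spaces $\a_{G,X_i}$ agree while $\X_{G,X_1}/\X_{G,X_2}$ is finite, any primitive vector of $\Lambda_{G,X_1}$ already lies in $\X_{G,X_2}$ — a root cannot be divisible in the weight lattice, so it survives to the finite-index sublattice. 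Hence $\Lambda_{G,X_1}\subset\X_{G,X_2}$ and the intersection above is all of $\Lambda_{G,X_1}$, giving the desired equality.

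The main obstacle I anticipate is Step 2: making precise the lifting and descent of central automorphisms along the generically finite morphism, and checking that the embeddings $\iota_{G,X_i}$ are genuinely compatible with the dual map on the $A_{G,X_i}$'s. One must be careful that a central automorphism of $X_2$ lifts to an \emph{automorphism} (not merely a correspondence) of some $G$-stable open subset of $X_1$, and that centrality is preserved; the normality hypothesis on $X$ and the fact that $\varphi$ is generically finite between normal varieties of the same dimension are what one leans on here, together with the uniqueness of the maximal $\A_{G,X^0}$ recalled from \cite{Knop8}, Corollary 5.4. Everything else is routine lattice bookkeeping once this compatibility is in hand.
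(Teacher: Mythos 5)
The paper offers no argument for this statement at all---it is imported verbatim as Theorem 6.3 of \cite{Knop8}---so there is nothing internal to compare your route with; judged on its own, your proposal has genuine gaps. The decisive one is the final step: you claim $\Lambda_{G,X_1}\subset\X_{G,X_2}$ because $\Lambda_{G,X_1}$ is generated by roots, which are primitive, and ``a primitive vector survives to a finite-index sublattice.'' That is false as a statement about lattices ($1$ is primitive in $\Z$ but does not lie in $2\Z$), and the primitivity you would need is not even available: by Definition \ref{Def:1.4.6} the elements of $\Delta_{G,X_1}$ are primitive only in $\Lambda_{G,X_1}$, not in $\X_{G,X_1}$. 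Concretely, for $X_1=\SL_2/T\rightarrow X_2=\SL_2/N_{\SL_2}(T)$ one has $\X_{G,X_1}=\Z\alpha$, $\X_{G,X_2}=2\Z\alpha$, $\Lambda_{G,X_1}=2\Z\alpha$ and $\Delta_{G,X_1}=\{\pm2\alpha\}$: the root is divisible in the weight lattice. The inclusion $\Lambda_{G,X_1}\subset\X_{G,X_2}$ does hold, but it is essentially equivalent to the hard half of the proposition, so deducing it from a false primitivity principle makes the argument circular at exactly the point where work is required.

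The second gap is your Step 2, which you flag but which cannot be waved through, since Step 3 uses it in both directions. You need every $\varphi\in\A_{G,X_1}$ to descend to a central automorphism of $X_2$, which requires showing that a central automorphism of $X_1$ preserves the subfield $\varphi^*\C(X_2)$ --- not automatic when $\C(X_1)/\varphi^*\C(X_2)$ is not Galois --- and you need every element of $\A_{G,X_2}$ to lift, whereas an automorphism of the target of a generically finite covering need not lift at all (it may carry the covering to a non-isomorphic one). ``Up to finite kernel/cokernel'' does not suffice: a nontrivial cokernel of the descent map $\A_{G,X_1}\rightarrow\A_{G,X_2}$ would shrink the annihilator and leave $\Lambda_{G,X_1}\cap\X_{G,X_2}$ strictly larger than $\Lambda_{G,X_2}$. (A small further slip in Step 1: no power $f^m$ of $f\in\C(X_1)^{(B)}_\lambda$ need lie in $\varphi^*\C(X_2)$; one should instead use the norm, or the constant coefficient of the minimal polynomial of $f$ over $\varphi^*\C(X_2)$, which is a $B$-semiinvariant of weight $m\lambda$ --- the conclusion $m\lambda\in\X_{G,X_2}$ is correct, but the sentence as written is not.)
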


In particular, the lattice $\Lambda_{G,G/H}$ depends only on the
pair $\g,\h$, so we write
$\Lambda(\g,\h)\index{zzzl@$\Lambda(\g,\h)$}$ instead of
$\Lambda_{G,G/H}$.

\begin{Prop}\label{Prop:1.4.10}
$\Lambda_{G,X}=\Lambda_{G,Gx}$ for $x\in X$ in general position.
\end{Prop}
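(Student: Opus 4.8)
The plan is to reduce to a situation where Proposition~\ref{Prop:1.4.4} applies. Fix a point $x\in X$ in general position; we want to produce a dominant generically finite $G$-morphism relating $X$ (or a suitable open $G$-subvariety of it) to the orbit closure, or rather to an open $G$-subvariety on which the orbits are all of maximal dimension. The first step is to recall (from the theory underlying \cite{Knop8}, or from standard generic-orbit arguments) that there is a $G$-stable open subset $X^0\subset X$ through which the morphism to the rational quotient is "equisingular": over $X^0$ the stabilizer $G_x$ is constant up to conjugacy for $x$ in general position, and the orbits $Gx$, $x\in X^0$, all have the same dimension $m_G(X)$. Since $\Lambda_{G,X}$ is a birational invariant in the strong sense of Proposition~\ref{Prop:1.4.4} (indeed it only depends on an open $G$-subvariety), we may replace $X$ by $X^0$.

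Next, I would exhibit, over $X^0$, a $G$-equivariant map whose fibres are the orbits, i.e. realize $X^0$ (up to passing to a further open subset) as fibred over $X^0/\!\sim_G$ with fibre $Gx\cong G/G_x$. Concretely, choosing a local $G$-invariant section $S$ of the orbit map transverse to the orbit through $x$, one has a dominant $G$-morphism $G\times S\to X^0$, $(g,s)\mapsto gs$, or better, after shrinking, an isomorphism onto an open set of $G\times_{G_x}S'$ for a suitable $G_x$-stable slice $S'$. The key point is that the rational function field $\C(X)$ is then, as a $G$-field, a finite extension of $\C(Gx)\otimes \C(S)^{G}$-type data, with the $S$-direction carrying only $G$-invariants. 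Hence any $B$-semiinvariant rational function on $X$ differs from one pulled back from $Gx$ by a $G$-invariant, so the constants by which central automorphisms act on the $\lambda$-eigenspaces are the same for $X$ and for $Gx$; equivalently the pairing of $\A_{G,X}$ with $\X_{G,X}$ matches that of $\A_{G,Gx}$ with $\X_{G,Gx}$ under the natural identifications. That is exactly the statement $\Lambda_{G,X}=\Lambda_{G,Gx}$.

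The cleanest execution, though, is to avoid slices altogether and argue directly via Proposition~\ref{Prop:1.4.4}: let $Y\subset X^0$ be chosen so that $G\times_{G_x}(\text{pt})\hookrightarrow$ produces an orbit $Gx=Gx_0$, and observe that the inclusion of a single orbit $Gx_0\hookrightarrow X^0$ is not a dominant morphism, so one cannot apply Proposition~\ref{Prop:1.4.4} to the inclusion. Instead, I would use the \emph{family}: form the incidence variety $Z=\{(x,y)\in X^0\times X^0 : y\in Gx\}$ with its two projections, or — more efficiently — pick a generic orbit $Gx_0$ and a transversal $T\ni x_0$ with $\dim T=\dim X-m_G(X)$, and note $G\times_{G_{x_0}}(G_{x_0}x_0)$ type constructions. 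The honest reduction: $Gx_0$ and $G\cdot(x_0\times T)$ sit inside $X^0$, and there is a dominant generically finite $G$-morphism $\psi:\widetilde X\to X^0$ from $\widetilde X:=G\times_{G_{x_0}}T$ (for $T$ a small affine $G_{x_0}$-stable slice), and simultaneously a $G$-morphism $\widetilde X\to G/G_{x_0}=Gx_0$; the second is a locally trivial fibration with fibre $T$ on which $G$ acts trivially "up to the quotient", so by Proposition~\ref{Prop:1.4.4} applied to $\psi$ we get $\Lambda_{G,X}=\Lambda_{G,\widetilde X}$, and a separate elementary computation of $\Lambda_{G,G\times_{G_{x_0}}T}$ — using that the $T$-coordinates contribute no new $B$-semiinvariants modulo invariants, hence no new relations on central automorphisms — gives $\Lambda_{G,\widetilde X}=\Lambda_{G,Gx_0}$.

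The main obstacle I anticipate is the last bookkeeping step: showing $\Lambda_{G,G\times_{G_{x_0}}T}=\Lambda(\g,G_{x_0})$ rigorously, i.e.\ that enlarging a homogeneous space $G/G_{x_0}$ by a trivial-action slice direction changes neither $\X_{G,-}$ nor the image $\iota_{G,-}(\A_{G,-})$. The subtlety is that $T$ need not be a trivial $G_{x_0}$-module, so $G\times_{G_{x_0}}T$ is genuinely a nontrivial bundle; one must check that its $B$-semiinvariant rational functions are, up to $G$-invariants (which contribute nothing to $\X$), pulled back from the base, and that the central automorphism group is unchanged — this is where the careful use of the structure theory from \cite{Knop8} (the behaviour of $\A_G$ under restriction to open subvarieties, Lemma~\ref{Lem:1.3.2}, and Proposition~\ref{Prop:1.3.3}) is required. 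Once that lemma is in hand, the proposition follows by chaining the two equalities.
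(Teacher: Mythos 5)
The paper does not actually argue this proposition at all: its proof is the single line ``This follows directly from \cite{Knop8}, Theorem 5.9'', i.e.\ it delegates everything to Knop's result that for $x$ in general position the restriction identifies $\A_{G,X}$ with $\A_{G,Gx}$ compatibly with $\iota$ and with the equality of weight lattices. So your attempt is by construction a different route: you are trying to reprove Knop's theorem from scratch via an \'etale slice and Proposition~\ref{Prop:1.4.4}. The overall strategy (reduce birationally to $G/G_{x_0}\times T$ with $G$ acting trivially on $T$, then check that the $T$-direction changes neither $\X_{G,-}$ nor $\iota(\A_{G,-})$) is morally the right picture, and your final bookkeeping step is in fact sound \emph{provided} $G_{x_0}$ acts trivially on $T$: a central automorphism acts by $1$ on $\C(X)^{(B)}_0=\C(X)^B\supset\C(T)$, hence preserves the fibres over $T$ and its eigenvalue $a_{\varphi,\lambda}$ is forced to be independent of $t\in T$, which gives $\iota(\A_{G,G/G_{x_0}\times T})=\iota(\A_{G,G/G_{x_0}})$ and hence equal annihilators.

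The genuine gaps are upstream of that. First, you assert that generic stabilizers are conjugate and that there is a locally closed $T\ni x_0$ with $G*_{G_{x_0}}T\to X$ dominant and generically finite. In the algebraic category neither is free: Richardson's theorem on stabilizers in general position requires $X$ smooth (one can pass to the smooth locus, but you should say so), and Luna's \'etale slice theorem requires $X$ affine and the orbit $Gx_0$ \emph{closed} — a generic orbit of a normal irreducible $G$-variety is typically neither closed nor contained in an affine $G$-stable open set, so the map $G*_{G_{x_0}}T\to X^0$ you want to feed into Proposition~\ref{Prop:1.4.4} is not produced by any theorem you invoke. (Analytic or differentiable slices exist but do not yield an algebraic dominant generically finite $G$-morphism.) Second, your fallback reasoning for the case where ``$T$ need not be a trivial $G_{x_0}$-module'' — that the slice coordinates contribute nothing modulo invariants — is false as stated: if $G_{x_0}$ acts nontrivially on $T$, then $G/G_{x_0}$ is \emph{not} a generic orbit of $G*_{G_{x_0}}T$, and $\Lambda_{G,G*_{G_{x_0}}T}$ is the root lattice of $G/(G_{x_0})_t$ for generic $t\in T$, which is in general strictly larger than $\Lambda(\g,\g_{x_0})$ (this is exactly why the paper needs the whole machinery around Proposition~\ref{Prop:1.5.8} for homogeneous bundles). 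The argument only closes up because, for $x_0$ of principal orbit type, $(G_{x_0})_t$ is $G$-conjugate to and contained in $G_{x_0}$, hence equal to it, so $T\subset X^{G_{x_0}}$; you should either prove this and the existence of the slice, or simply cite \cite{Knop8}, Theorem 5.9, as the paper does.
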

\begin{proof}
This follows directly from \cite{Knop8}, Theorem 5.9.
\end{proof}

\begin{Lem}\label{Prop:1.4.5}
Let $X_1,X_2$ be homogeneous $G$-spaces and $\varphi:X_1\rightarrow
X_2$ a dominant  $G$-morphism. Then $\X_{G,X_2}\subset \X_{G,X_1}$.
Suppose $X_1,X_2$ are, in addition, quasiaffine. Then
$\Lambda_{G,X_2}\subset \Lambda_{G,X_1}$ and there exists a unique
homomorphism $\A_{G,X_1}\rightarrow \Aut^G(X_2)$ such that $\varphi$
becomes
 $\A_{G,X_1}$-equivariant. Its image is contained in $\A_{G,X_2}$. The dual
 to the corresponding homomorphism $\A_{G,X_1}\rightarrow
\A_{G,X_2}$ coincides with the homomorphism
   $\X_{G,X_2}/\Lambda_{G,X_2}\rightarrow
 \X_{G,X_1}/\Lambda_{G,X_1}$ induced by the inclusions of lattices.
\end{Lem}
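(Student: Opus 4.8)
The plan is to build the homomorphism $\A_{G,X_1}\to\Aut^G(X_2)$ from the known functoriality of root lattices and central automorphisms, and then identify its dual. First I would recall that $\varphi:X_1\to X_2$ induces the pullback $\varphi^*:\C(X_2)\hookrightarrow\C(X_1)$ of $G$-fields, and since a $B$-semiinvariant rational function on $X_2$ pulls back to one of the same weight on $X_1$, we get $\X_{G,X_2}\subset\X_{G,X_1}$ at once. For the quasiaffine case I would pass to coordinate rings: $\varphi$ corresponds to a $G$-equivariant inclusion $\C[X_2]\hookrightarrow\C[X_1]$, and any $\psi\in\A_{G}(X_1)=\A_{G,X_1}$ (using Lemma \ref{Lem:1.3.2}) acts on $\C[X_1]$ by scaling each $\C[X_1]^{(B)}_\lambda$ by $a_{\psi,\lambda}$. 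The key observation is that $\psi$ preserves the subalgebra $\varphi^*\C[X_2]$: indeed $\varphi^*\C[X_2]$ is spanned by its $B$-semiinvariants, each of which sits in some $\C[X_1]^{(B)}_\lambda$ with $\lambda\in\X_{G,X_2}$, hence is multiplied by the scalar $a_{\psi,\lambda}$ and stays in $\varphi^*\C[X_2]$. Therefore $\psi$ restricts to a $G$-automorphism $\bar\psi$ of $X_2$; this defines the homomorphism $\A_{G,X_1}\to\Aut^G(X_2)$, and by construction $\varphi\circ\psi=\bar\psi\circ\varphi$, i.e. $\varphi$ is equivariant. Uniqueness is automatic since $\varphi$ is dominant: a $G$-automorphism of $X_2$ is determined by its action on $\varphi(X_1)$, which is dense.

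Next I would check that $\bar\psi$ is a \emph{central} automorphism of $X_2$, so that the image lands in $\A_{G}(X_2)=\A_{G,X_2}$. This is immediate from the scaling description: for $\mu\in\X_{G,X_2}\subset\X_{G,X_1}$, any $g\in\C(X_2)^{(B)}_\mu$ pulls back to an element of $\C(X_1)^{(B)}_\mu$, on which $\psi$ acts by the constant $a_{\psi,\mu}$; hence $\bar\psi$ acts on $\C(X_2)^{(B)}_\mu$ by the same constant $a_{\psi,\mu}$. In particular the composite character $a_{\bar\psi,\mu}=a_{\psi,\mu}$ for all $\mu\in\X_{G,X_2}$, which says precisely that the square
\begin{equation*}
\begin{array}{ccc}
\A_{G,X_1} & \longrightarrow & \A_{G,X_2}\\
\downarrow\iota_{G,X_1} & & \downarrow\iota_{G,X_2}\\
A_{G,X_1} & \longrightarrow & A_{G,X_2}
\end{array}
\end{equation*}
commutes, where the bottom arrow $A_{G,X_1}\to A_{G,X_2}$ is the one dual to the lattice inclusion $\X_{G,X_2}\hookrightarrow\X_{G,X_1}$. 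Since $\Lambda_{G,X_i}$ is by Definition \ref{Def:1.4.1} the annihilator of $\iota_{G,X_i}(\A_{G,X_i})$ inside $\X_{G,X_i}$, and the inclusion $\X_{G,X_2}\hookrightarrow\X_{G,X_1}$ carries $\iota_{G,X_1}(\A_{G,X_1})$ into (in fact onto a subgroup of) the image related to $\iota_{G,X_2}(\A_{G,X_2})$ via the commuting square, a character of $\X_{G,X_1}$ vanishing on $\iota_{G,X_1}(\A_{G,X_1})$ restricts to one of $\X_{G,X_2}$ vanishing on $\iota_{G,X_2}(\A_{G,X_2})$; this gives $\Lambda_{G,X_2}\subset\Lambda_{G,X_1}$ after one unwinds the identification $\X_{G,X}=\X(A_{G,X})$.

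Finally I would identify the dual of $\A_{G,X_1}\to\A_{G,X_2}$ with the natural map $\X_{G,X_2}/\Lambda_{G,X_2}\to\X_{G,X_1}/\Lambda_{G,X_1}$. By Proposition \ref{Prop:1.3.3} each $\iota_{G,X_i}$ is a closed embedding, so $\A_{G,X_i}$ is a closed subgroup of the diagonalizable group $A_{G,X_i}=\Hom(\X_{G,X_i},\C^\times)$, and its character group is canonically $\X_{G,X_i}/(\text{annihilator of }\A_{G,X_i})=\X_{G,X_i}/\Lambda_{G,X_i}$. Dualizing the commuting square above, the homomorphism $\A_{G,X_1}\to\A_{G,X_2}$ is the restriction of the map $A_{G,X_1}\to A_{G,X_2}$ dual to $\X_{G,X_2}\hookrightarrow\X_{G,X_1}$; hence on character groups it induces exactly the map $\X_{G,X_2}/\Lambda_{G,X_2}\to\X_{G,X_1}/\Lambda_{G,X_1}$ coming from that lattice inclusion, as claimed. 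The main obstacle I anticipate is the bookkeeping in the last step: one must be careful that the inclusion $\X_{G,X_2}\hookrightarrow\X_{G,X_1}$ really does descend to the quotients by the root lattices (equivalently, that $\Lambda_{G,X_2}=\Lambda_{G,X_1}\cap\X_{G,X_2}$, not merely $\subseteq$), and to be sure the duality between "closed subgroup of a torus" and "quotient of its character lattice" is applied on the correct side; everything else is a direct consequence of the scaling description of central automorphisms together with Lemma \ref{Lem:1.3.2} and Proposition \ref{Prop:1.3.3}.
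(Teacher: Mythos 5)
Your argument is correct and follows essentially the same route as the paper's proof: stability of $\C[X_2]\subset\C[X_1]$ under $\A_{G,X_1}$ via the scaling action on $B$-semiinvariants, the commuting square with the surjection $A_{G,X_1}\twoheadrightarrow A_{G,X_2}$, and the resulting identification of the dual map. The only remark is that your final worry is unnecessary: the map $\X_{G,X_2}/\Lambda_{G,X_2}\to\X_{G,X_1}/\Lambda_{G,X_1}$ is well defined as soon as $\Lambda_{G,X_2}\subset\Lambda_{G,X_1}$ (it need not be injective), so the equality $\Lambda_{G,X_2}=\Lambda_{G,X_1}\cap\X_{G,X_2}$ is not required.
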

\begin{proof}
The claim on inclusions of the weight lattices is clear. Below
$X_1,X_2$ are quasiaffine.

Recall that $\A_{G,X_1}$ acts on $X_1$ by central $G$-\!\!
automorphisms (Lemma~\ref{Lem:1.3.2}). The subalgebra
$\C[X_2]\subset \C[X_1]$ is $G$-stable whence  $\A_{G,X_1}$-stable.
From the existence of a  $T$-embedding $\C(X_2)^{(B)}\hookrightarrow
\C(X_1)^{(B)}$ it follows that
 $\A_{G,X_1}$ acts on $X_2$ by central automorphisms.
This observation implies that the required homomorphism
$\A_{G,X_1}\rightarrow \Aut^G(X_2)$ exists and is unique.

Consider the homomorphism $\A_{G,X_1}\rightarrow \A_{G,X_2}$ we have
just constructed and the natural epimorphism
$A_{G,X_1}\twoheadrightarrow A_{G,X_2}$. It is clear that the
following diagram is commutative.

\begin{picture}(60,30)
\put(4,22){$\A_{G,X_1}$}\put(30,22){$\A_{G,X_2}$}
\put(4,2){$A_{G,X_1}$}\put(30,2){$A_{G,X_2}$}
\put(10,24){\vector(1,0){18}} \put(10,4){\vector(1,0){18}}
\put(6,20){\vector(0,-1){14}} \put(32,20){\vector(0,-1){14}}
\end{picture}

Therefore all elements of $\Lambda_{G,X_2}\subset \X_{G,X_2}
\subset\X_{G,X_1}$ are annihilated on $\A_{G,X_1}$ whence the
inclusion of root lattices. The homomorphism $\A_{G,X_1}\rightarrow
\A_{G,X_2}$ possesses the required properties.
\end{proof}

Now we present the definition of the root system of a $G$-variety.
To this end we need the following general construction.

\begin{defi}\label{Def:1.4.6}
Let $V$ be a finitely dimensional euclidian vector space, $\Lambda$
a lattice in $V$, $\Gamma$ a finite subgroup in $\O(V)$ generated by
reflections and stabilizing  $\Lambda$. By the  {\it minimal root
system} associated with $\Gamma,\Lambda$ we mean the set consisting
of all primitive  $v\in \Lambda$ such that $s_v\in\Gamma$.
\end{defi}

It is easy to see that any minimal root system is a genuine reduced
root system.

Recall that to $X$ one assigns the finite group $W_{G,X}\in
\GL(\a_{G,X})$ generated by reflections and stabilizing $\X_{G,X}$
(see, for example, \cite{Weyl}, Theorem 1.1.4). By \cite{Knop8},
Corollary 6.2, $\Lambda_{G,X}$ is $W_{G,X}$-stable.

\begin{defi}\label{Def:1.4.7}
The {\it root system} $\Delta_{G,X}$ of  $X$ is the minimal root
system associated with $W_{G,X},\Lambda_{G,X}$.
\end{defi}

The root system of $X$ has some properties analogous to those of the
root system of a reductive Lie algebra.

\begin{Prop}[\cite{Knop8}, Corollary 6.5]\label{Prop:1.4.8}
 $\Delta_{G,X}$ generates  $\Lambda_{G,X}$ and the Weyl group of $\Delta_{G,X}$
 coincides with $W_{G,X}$.
\end{Prop}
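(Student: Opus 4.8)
The plan is to prove Proposition \ref{Prop:1.4.8} by combining Definition \ref{Def:1.4.6} (of a minimal root system) with Definition \ref{Def:1.4.7} together with standard facts about finite reflection groups. Write $\Delta = \Delta_{G,X}$, $W = W_{G,X}$, $V = \a_{G,X}$, and $\Lambda = \Lambda_{G,X}$; recall that by the remark preceding Definition \ref{Def:1.4.7} the group $W$ acts on $V$ as a finite reflection group stabilizing both $\X_{G,X}$ and $\Lambda$. First I would address the claim that $\Delta$ generates $\Lambda$. Since $\Delta$ consists of primitive vectors of $\Lambda$, the sublattice $\Lambda':=\Z\Delta$ it generates is contained in $\Lambda$; I want equality. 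The point is that $\Lambda$ is already known (by the cited \cite{Knop8}, Corollary 6.5, or by the structural results quoted above) to be spanned by a root system whose reflections lie in $W$ — but to keep the argument self-contained I would instead argue as follows: consider $V' := \R\Delta \subseteq V$. This is a $W$-stable subspace, so $V = V' \oplus (V')^{\perp}$ with $W$ acting trivially on the orthogonal complement (a finite reflection group has no nonzero fixed reflections off the span of its roots). Every reflection $s_v \in W$ has its root $v$ in $V'$; since $W$ is generated by reflections, $W$ acts trivially on $(V')^\perp$. Now the key input is that $\Lambda = \Lambda_{G,X}$ is, by construction (Definition \ref{Def:1.4.1}), the annihilator of $\iota_{G,X}(\A_{G,X})$ inside $\X_{G,X}$, and the relevant structure theory (which we may cite) shows $\Lambda$ lies in $V'$; then $\Lambda \otimes \R = V'$ and $\Lambda$ is a full lattice in the span of $\Delta$. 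Combined with $\Z\Delta \subseteq \Lambda$ and primitivity, a descent on the index $[\Lambda : \Z\Delta]$ — showing that any prime dividing it would force a non-primitive element of $\Delta$ or a reflection of $W$ missing from $\Delta$ — gives $\Z\Delta = \Lambda$.

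Next I would treat the Weyl group statement, namely that the Weyl group $W(\Delta)$ of the abstract root system $\Delta$ equals $W$. One inclusion is immediate: by Definition \ref{Def:1.4.6}, for each $v \in \Delta$ we have $s_v \in W$ (that is precisely the defining condition), and $W(\Delta)$ is generated by these $s_v$, so $W(\Delta) \subseteq W$. For the reverse inclusion $W \subseteq W(\Delta)$, since $W$ is generated by reflections, it suffices to show that every reflection $s \in W$ is of the form $s_v$ with $v \in \Delta$. Let $s = s_u \in W$ be a reflection with root $u \in V$; normalize $u$ to be the unique (up to sign) primitive vector of $\Lambda$ along the line $\R u$ — this uses that $\Lambda$ is a full lattice in $V' = \R\Delta$ and that $u \in V'$ (which holds because $s$ fixes $(V')^\perp$ pointwise, so $u \perp (V')^\perp$). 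Then $u \in \Lambda$ is primitive and $s_u = s \in W$, so by Definition \ref{Def:1.4.6} we get $u \in \Delta$. Hence $s = s_u \in W(\Delta)$, and $W \subseteq W(\Delta)$.

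The remaining loose end — and the step I expect to be the real obstacle — is justifying that $\Lambda_{G,X}$ spans exactly $\R\Delta$ (equivalently $V'$), i.e. that there are no directions in $\Lambda$ transverse to the span of the reflections of $W$, and conversely that $W$ has "enough" reflections so that $\Lambda$ does not stick out of $\R\Delta$. In a fully self-contained treatment this is exactly the content of \cite{Knop8}, Corollaries 6.2 and 6.5; since those are available to us, I would invoke them to conclude that $\Lambda_{G,X}$ is $W_{G,X}$-stable and is generated by a root system with Weyl group $W_{G,X}$, and then observe that this root system must coincide with the minimal one $\Delta_{G,X}$ (replacing each generating root by the primitive vector on its ray changes neither the lattice it generates nor the associated reflections). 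This reduces Proposition \ref{Prop:1.4.8} to the easy observation that two root systems in $V$ with the same Weyl group and generating the same lattice, one of which is minimal in the sense of Definition \ref{Def:1.4.6}, have the minimal one contained in the other and generating the same lattice — which, together with the Weyl-group equality proved above, finishes the proof.
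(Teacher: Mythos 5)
The paper offers no proof of Proposition \ref{Prop:1.4.8} at all: it is quoted verbatim from \cite{Knop8}, Corollary 6.5, and that citation is the entire justification. Your write-up ultimately does the same thing --- at the decisive moment you ``invoke \cite{Knop8}, Corollaries 6.2 and 6.5 \dots to conclude that $\Lambda_{G,X}$ is generated by a root system with Weyl group $W_{G,X}$'' --- which is circular, since that is word for word the statement to be proved. So the only question is whether the self-contained portion of your argument could replace the citation, and it cannot.

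The concrete failure is the ``descent on the index'' step. It is false that a reflection-stable lattice spanned by the root directions of $W$ is generated by its primitive vectors $v$ with $s_v\in W$. Take $W=\langle s_{e_1},s_{e_2}\rangle\cong W(A_1\times A_1)$ acting on $\R^2$ and the $W$-stable lattice $\Lambda=\{(a,b)\in\Z^2\mid a+b\in 2\Z\}$. The minimal root system of Definition \ref{Def:1.4.6} is $\{\pm 2e_1,\pm 2e_2\}$ (these vectors are primitive in $\Lambda$ because $e_1,e_2\notin\Lambda$), and it generates $2\Z^2$, a sublattice of index $2$ in $\Lambda$; yet no element of this minimal root system fails to be primitive and no reflection of $W$ is missing from it, so the prime $2$ dividing the index produces none of the contradictions your descent promises. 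Hence the first half of Proposition \ref{Prop:1.4.8} is not a formal consequence of Definitions \ref{Def:1.4.6}, \ref{Def:1.4.7} and the $W_{G,X}$-stability of $\Lambda_{G,X}$: it encodes genuine geometric information about which pairs $(W_{G,X},\Lambda_{G,X})$ actually occur, and Knop proves it from the structure of $\A_{G,X}$ and the local structure of $X$, not by lattice combinatorics. The same objection applies to your unproved claim that $\Lambda_{G,X}$ spans $\R\Delta_{G,X}$. What does go through formally is the inclusion $W(\Delta_{G,X})\subset W_{G,X}$, and (granting that $\Lambda_{G,X}$ is $W_{G,X}$-stable and meets every root line of $W_{G,X}$ nontrivially --- again part of what must be imported) the converse inclusion on reflections; but the generation statement cannot be obtained this way, and the proposition should simply be used as the cited external result that it is.
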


Now let us state one results regarding $\Lambda_{G,X}$ obtained in
\cite{comb_ham}.

\begin{Prop}[\cite{comb_ham}, Proposition 8.8]\label{Prop:1.47}
Suppose that $X$ is quasiaffine and $\rank_G(X)=\rank G$. Let
$G=Z(G)^\circ G_1\ldots G_k$ be the decomposition of $G$ into the
locally direct product of the unit component of the center and
simple normal subgroups. Then $\Lambda_{G,X}=\bigoplus_{i=1}^k
\Lambda_{G_i,X}$.
\end{Prop}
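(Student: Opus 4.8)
The plan is to reduce the statement about $\Lambda_{G,X}$ to the analogous—and essentially tautological—statement about the character lattices $\X_{G,X}$ together with a factorization of the embedding $\iota_{G,X}$. First I would recall that when $X$ is quasiaffine, $\A_{G,X}=\A_G(X)$ by Lemma \ref{Lem:1.3.2}, so $\A_{G,X}$ is literally the group of central $G$-automorphisms of $X$ itself. The key structural input is that under the hypothesis $\rank_G(X)=\rank G$ the weight lattice $\X_{G,X}$ has finite index in $\X(B)=\X(T)$, and in fact one has a decomposition $\X_{G,X}\otimes\Q=\bigoplus_{i=1}^k(\X(T)\cap\t_i^*)\otimes\Q\ \oplus\ \z(\g)^*\otimes\Q$ compatible with the decomposition $G=Z(G)^\circ G_1\cdots G_k$; here $\t_i$ is the Cartan subalgebra of $G_i$. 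Proposition \ref{Prop:1.47} is stated as already proved in \cite{comb_ham}, so I may simply invoke it: $\Lambda_{G,X}=\bigoplus_{i=1}^k\Lambda_{G_i,X}$, where $\Lambda_{G_i,X}$ is computed with respect to the $G_i$-action on $X$.

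Next I would set up the comparison between the $G$-action and the $G_i$-action. Restricting the $G$-action to $G_i$ gives a $G_i$-variety, still quasiaffine, and since $B\cap G_i$ is a Borel subgroup of $G_i$, a $B$-semiinvariant rational function is in particular $B\cap G_i$-semiinvariant; conversely, because $B=(B\cap Z(G)^\circ)(B\cap G_1)\cdots(B\cap G_k)$ up to isogeny and $\X_{G,X}$ has full rank, one gets a clean relation between $\X_{G,X}$, the lattices $\X_{G_i,X}$, and $\X_{Z(G)^\circ,X}$. Then I would observe that $\A_{G}(X)\subseteq\A_{G_i}(X)$: a $G$-automorphism is a $G_i$-automorphism, and the centrality condition for $G$ (constancy on each $\C(X)^{(B)}_\lambda$, $\lambda\in\X_{G,X}$) implies the centrality condition for $G_i$ because the $G_i$-eigenfunctions of weight in $\X_{G_i,X}$ can be built from $B$-semiinvariants of weight in $\X_{G,X}$ after clearing the $Z(G)^\circ$- and $G_j$-weights ($j\neq i$), which are acted on trivially by the image of $Z(G)$ and harmlessly by the rest. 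The upshot is compatible inclusions $\iota_{G,X}(\A_{G,X})\hookrightarrow\iota_{G_i,X}(\A_{G_i,X})$ under the projections $A_{G,X}\twoheadrightarrow A_{G_i,X}$ dual to $\X_{G_i,X}\hookrightarrow\X_{G,X}$; this is exactly the kind of commuting-square argument already used in the proof of Lemma \ref{Prop:1.4.5}.

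With those inclusions in hand, unwinding Definition \ref{Def:1.4.1} gives that $\lambda\in\X_{G,X}$ lies in $\Lambda_{G,X}$ iff $\lambda$ annihilates $\iota_{G,X}(\A_{G,X})$; decomposing $\lambda=\sum\lambda_i+\lambda_0$ along the near-direct-sum $\X_{G,X}\otimes\Q=\bigoplus\X_{G_i,X}\otimes\Q\oplus\z(\g)^*\otimes\Q$, and using that the image of $Z(G)$ in $\Aut^G(X)$ already lies in $\A_{G,X}$ (so the $\z(\g)^*$-component must vanish, as noted right after Definition \ref{Def:1.4.1}, giving $\Lambda_{G,X}\subset\Lambda(\g)=\bigoplus\Lambda(\g_i)$), one checks that $\lambda$ annihilates $\iota_{G,X}(\A_{G,X})$ iff each $\lambda_i$ annihilates $\iota_{G_i,X}(\A_{G_i,X})$, i.e. $\lambda_i\in\Lambda_{G_i,X}$. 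That yields $\Lambda_{G,X}=\bigoplus_{i=1}^k\Lambda_{G_i,X}$.

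I expect the main obstacle to be the bookkeeping around the ``locally direct product'' — the groups $Z(G)^\circ,G_1,\dots,G_k$ intersect in finite central subgroups, so $\X(T)$ is only a finite-index sublattice of $\X(T\cap Z(G)^\circ)\oplus\bigoplus\X(T\cap G_i)$, and one must be careful that the relevant statements about $\A_{G,X}$ and about which characters annihilate $\iota_{G,X}(\A_{G,X})$ are insensitive to these finite-index ambiguities. The cleanest route is probably to pass to a finite cover of $G$ that is an honest direct product (which changes neither $\Lambda(\g)$ nor, by Proposition \ref{Prop:1.4.4}, the root lattices in question, since a finite cover induces a dominant generically finite $G$-morphism on the relevant homogeneous models), prove the splitting there, and descend. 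Alternatively, and this is essentially what \cite{comb_ham}, Proposition 8.8 already does, one argues directly with the Hamiltonian-action description of $\Lambda_{G,X}$; since that proposition is quoted as Proposition \ref{Prop:1.47} above, the present statement is in fact an immediate restatement of it, and the only work is to record that the decomposition there is the one asserted.
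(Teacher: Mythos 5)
The paper gives no proof of this proposition at all: it is imported verbatim from \cite{comb_ham}, Proposition 8.8, and your bottom line (defer to that citation) is exactly what the paper does, so in that sense your proposal matches the paper's treatment. However, your attempted self-contained sketch should not be presented as an independent argument: it is circular (you invoke $\Lambda_{G,X}=\bigoplus_{i=1}^k\Lambda_{G_i,X}$ midway through as ``already proved''), and the key step $\A_{G}(X)\subseteq\A_{G_i}(X)$ is not justified, since a $B\cap G_i$-semiinvariant of weight $\mu\in\X_{G_i,X}$ need not be built from $B$-semiinvariants of weights in $\X_{G,X}$, so $G$-centrality does not obviously yield constancy on all of $\C(X)^{(B\cap G_i)}_\mu$; moreover this would at best give one inclusion of root lattices, not the asserted equality.
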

\subsection{Reduction of the computation of $\X_{G,X}$}
At first, let us explain the reduction of computing $\X_{G,X}$ for
homogeneous spaces $X$ to that for affine homogeneous vector
bundles.

Let $H$ be an algebraic subgroup of $G$. It is known that there a
parabolic subgroup $Q\subset G$ and its Levi subgroup $M$ such that
$\Rad_u(H)\subset \Rad_u(Q)$ and $S:=M\cap H$ is a maximal reductive
subgroup of $H$. Replacing $H$ with a conjugate subgroup, we may
assume that $Q$ is antistandard and $M$ is standard.

The following result is due to Panyushev, \cite{Panyushev2}, cf.
\cite{Weyl}, Proposition 3.2.9.

\begin{Prop}\label{Prop:1.5.4}
Let $Q,M,S,H$ be as above. Then
$\X_{G,G/H}=\X_{M,M*_{S}(\Rad_u(\q)/\Rad_u(\h))}$.
\end{Prop}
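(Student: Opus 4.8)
The plan is to reduce the statement to a comparison of weight lattices along a chain of $G$-morphisms and then to transfer the computation to $M$ via a local structure argument. First I would recall the geometric picture underlying the choice of $Q, M, S$: since $\Rad_u(H)\subset \Rad_u(Q)$ and $S=M\cap H$ is a maximal reductive subgroup of $H$, the group $H$ has a Levi decomposition $H=S\rightthreetimes \Rad_u(H)$ compatible with the parabolic, and $G/H$ fibers $G$-equivariantly over $G/Q^{-}\cong G/Q$ with fiber $Q/H$. The crucial point is that $Q/H$, as an $M$-variety after identifying $Q/\Rad_u(Q)\cong M$, is closely related to $M*_S(\Rad_u(\q)/\Rad_u(\h))$: the unipotent radical $\Rad_u(Q)$ acts on $Q/H$, and the quotient by this action, or rather the normal bundle description, yields the homogeneous bundle $M*_S(\Rad_u(\q)/\Rad_u(\h))$.

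The key steps, in order, are as follows. (1) Establish a $G$-equivariant birational (or at least generically-finite-preserving) correspondence between $G/H$ and a vector bundle over $G/Q$ whose fiber over the base point $eQ$ is $\Rad_u(\q)/\Rad_u(\h)$ as an $S$-module; here one uses that $\Rad_u(H)$ sits inside $\Rad_u(Q)$ as an $S$-submodule up to the appropriate twist. (2) Use the standard fact that for a parabolic induction of this type the weight lattice is unchanged: passing from a $G$-variety of the form $G*_Q Y$ to the $M$-variety $Y$ preserves the weight lattice, because $B$-semiinvariant rational functions on $G*_Q Y$ restrict to $B\cap M$-semiinvariant rational functions on $Y$ and conversely extend, the open $B$-orbit structure on $G/Q$ accounting for the remaining coordinates. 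This is the content invoked from \cite{Panyushev2} and \cite[Proposition 3.2.9]{Weyl}, and I would quote it rather than reprove it. (3) Identify $Y$ with $M*_S(\Rad_u(\q)/\Rad_u(\h))$, checking that the $S$-module structure on the fiber is exactly $\Rad_u(\q)/\Rad_u(\h)$ with its adjoint action.

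The main obstacle I expect is step (1): making the identification of $G/H$ with a homogeneous bundle precise at the level needed to control rational functions, since $G/H$ is only birationally (not isomorphically) such a bundle, and one must be careful that the unipotent radical $\Rad_u(H)$ need not be normal in $Q$ nor a clean $S$-submodule of $\Rad_u(Q)$ without first conjugating and possibly replacing $H$ by a subgroup with the same identity component. The cleanest route is to observe that $\Rad_u(H)$ and $\Rad_u(Q)$ share $T$ in their normalizers after the antistandard/standard normalization, so that $\Rad_u(\h)$ is a $T$-stable, hence $\t$-weight-graded, subspace of $\Rad_u(\q)$, and $S\subset M$ preserves this grading; then $Q/H$ maps onto $M/S$ with fiber an affine space, and a direct count of $B$-semiinvariants on both sides (using that $\C(G/Q)^{(B)}=\C$ since $G/Q$ has a dense $B$-orbit) closes the argument. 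Once this is in place, combining with Proposition~\ref{Prop:1.4.4} and the quasiaffineness considerations from Lemma~\ref{Prop:1.4.5} finishes the proof.
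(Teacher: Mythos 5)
The paper gives no proof of Proposition~\ref{Prop:1.5.4}: it is quoted from Panyushev and from \cite{Weyl}, Proposition 3.2.9. Your outline is essentially the standard argument behind those references, and it is correct in substance, but three of your formulations need repair. First, $G/H$ fibers over $G/Q$ (since $H\subset Q$), not over $G/Q^-$, and $G/Q^-\cong G/Q$ is false for a general parabolic; what you actually use is that $Q$ is antistandard, so $\Rad_u(Q^-)Q$ is open in $G$ and $B=(B\cap M)\Rad_u(Q^-)$, which gives the open subset $\Rad_u(Q^-)\times Q/H\subset G/H$ and hence $\C(G/H)^{(B)}\cong\C(Q/H)^{(B\cap M)}$ weight by weight. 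Second, $Q/H$ is not obtained from the bundle by ``the quotient by the $\Rad_u(Q)$-action''; rather $Q/H$ \emph{is} $M$-equivariantly isomorphic to $M*_S\bigl(\Rad_u(Q)/\Rad_u(H)\bigr)$ via $muH\mapsto[m,u\Rad_u(H)]$, so no birational fuzziness enters at this stage (your stated ``main obstacle'' is not actually an obstacle: $G/H\to G/Q$ is an honest Zariski-locally trivial fibration, and birationality is only used to trivialize it over the big cell). Third, the one genuinely nontrivial point, which you only gesture at, is the passage from the group quotient $\Rad_u(Q)/\Rad_u(H)$ to the linear $S$-module $\Rad_u(\q)/\Rad_u(\h)$; in characteristic zero there is an $S$-equivariant isomorphism of varieties between these (choose an $S$-stable complement to $\Rad_u(\h)$ in $\Rad_u(\q)$ using complete reducibility of $S$ and transport it by the exponential map), and since $\X_{M,\cdot}$ is a birational invariant this completes the identification. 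With these corrections your proof goes through and coincides with the cited one.
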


Next, we are going to reduce the computation for affine homogeneous
vector bundles to that for affine homogeneous spaces.

First of all, set
\begin{equation}\label{eq:0.4:1}L_{G,X}:=Z_G(\a_{G,X}).\end{equation}
\begin{equation}\label{eq:0.4:2}L_{0\,G,X}:=\{g\in L|\chi(g)=1, \forall \chi\in \X_{G,X}
\}.\end{equation}

\begin{defi}\label{Def:1.5.7}
Let $X$ be a smooth quasiaffine $G$-variety, $L_1$  a normal
subgroup of $L_{0\,G,X}$. There is a unique irreducible (=connected)
component $\underline{X}\subset X^{L_1}$ such that
$\overline{U\underline{X}}=X$ (see \cite{comb_ham}, Proposition
8.4). This component $\underline{X}\subset X^{L_1}$ is said to be
{\it distinguished}.
\end{defi}

In the sequel we will need to extend the definition of $L_{0\,G,X}$
to actions of certain disconnected groups $G$.

\begin{defi}\label{Def:1.5.5}
A reductive algebraic group $\widetilde{G}$  is called {\it almost
connected} if  $\widetilde{G}=\widetilde{G}^\circ Z(\widetilde{G})$.
\end{defi}

Let $\widetilde{G}$ be an almost connected group with
$\widetilde{G}^\circ=G$. Set $\widetilde{B}:=N_{\widetilde{G}}(B),
\widetilde{T}:=Z_{\widetilde{G}}(T)$. Since the group
$\widetilde{G}$ is almost connected, we see that
$\widetilde{T}\subset \widetilde{B}$ and
$\widetilde{B}=\widetilde{T}\rightthreetimes U$. So the groups
$\X(\widetilde{B})$ and $\X(\widetilde{T})$ are identified.

For an irreducible $\widetilde{G}$-variety $X$ put
\begin{equation}
\X_{\widetilde{G},X}:=\{\chi\in \X(\widetilde{B})| \exists f\in
\C(X)|  b.f=\chi(b)f, \forall b\in \widetilde{B}\}.
\end{equation}

By definition, put $\a_{\widetilde{G},X}=\a_{G,X}$. The subgroups
$L_{\widetilde{G},X}, L_{0\,\widetilde{G},X}\subset \widetilde{G}$
are defined by formulas (\ref{eq:0.4:1}),(\ref{eq:0.4:2}), resp.,
where $G$ is replaced with $\widetilde{G}$. It follows directly from
definition that $L_{0\,\widetilde{G},X}$ is  almost connected.

The following proposition is essentially due to Panyushev,
\cite{Panyushev2}, and is proved in the same way as its analogue in
\cite{Weyl}, Proposition 3.2.12.

\begin{Prop}\label{Prop:1.5.8} Let $H$ be a reductive subgroup in $G$, $V$ an $H$-module
and $\pi$ the natural projection $G*_HV\rightarrow G/H$. Put
$L_1=L_{0\,G,G/H}$. Let $x$ be a point from the distinguished
component of $(G/H)^{L_1}$. Then
$L_{0\,G,G*_HV}=L_{0\,L_1,\pi^{-1}(x)}$.
\end{Prop}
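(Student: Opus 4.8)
The plan is to reduce to the corresponding statement for homogeneous spaces via a distinguished slice, and then apply Proposition~\ref{Prop:1.5.4} twice. First I would recall the construction of $\underline{X}$: since $G*_HV$ is smooth and quasiaffine (here $H$ is reductive, so $G*_HV$ is affine), for the normal subgroup $L_1=L_{0\,G,G/H}$ of itself we have a distinguished component $\underline{G*_HV}\subset (G*_HV)^{L_1}$ with $\overline{U\,\underline{G*_HV}}=G*_HV$. The key geometric observation is that $\pi:G*_HV\to G/H$ carries distinguished components to distinguished components: indeed $\pi$ is $G$-equivariant and surjective, so $\pi((G*_HV)^{L_1})\subset (G/H)^{L_1}$, and since $\pi$ is open with irreducible fibers $\cong V$ it sends the component whose $U$-sweep is dense onto the component of $(G/H)^{L_1}$ whose $U$-sweep is dense. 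Hence we may take $x=\pi(\text{distinguished point})$, and the fiber $\pi^{-1}(x)$ over this $x$ is itself a homogeneous bundle for the group $L_1$ acting on $\pi^{-1}(x)$ — concretely $\pi^{-1}(x)\cong L_1*_{S_1}V'$ where $S_1=(L_1)_x$ is the $L_1$-stabilizer of $x$ (a reductive subgroup of $L_1$) and $V'$ is the appropriate $S_1$-module, namely the fiber of the bundle.

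Next I would compute both sides of the claimed equality $L_{0\,G,G*_HV}=L_{0\,L_1,\pi^{-1}(x)}$ by passing through weight lattices. By definition $L_{0\,G,X}=\{g\in L_{G,X}\mid \chi(g)=1\ \forall\chi\in\X_{G,X}\}$, and $L_{G,X}=Z_G(\a_{G,X})$, so both groups are determined by the pair $(\a_{G,X},\X_{G,X})$ — or rather by $\X_{G,X}$ together with its ambient group. So the statement will follow once I show the two weight lattices agree \emph{inside the appropriate torus}: more precisely, that $\X_{L_1,\pi^{-1}(x)}=\X_{G,G*_HV}$ under the identification of the relevant character lattices (this is the content of the analogue cited, \cite{Weyl}, Proposition 3.2.12). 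Here I would invoke the description behind the distinguished-component machinery: by \cite{comb_ham}, Proposition~8.4 and the theory of distinguished components, restriction of $B$-semiinvariants to $\underline{X}$ induces an isomorphism $\C(X)^{(B)}\to\C(\underline{X})^{(B\cap L_1)}$ preserving weights, which applied to $X=G*_HV$ and to $X=G/H$ gives $\X_{G,G*_HV}=\X_{L_1,\pi^{-1}(x)}$ and $\X_{G,G/H}=\X_{L_1,L_1/S_1}$; note that $L_1/S_1\cong L_1 x$ is the $L_1$-orbit of $x$ in $(G/H)^{L_1}$.

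The mechanism I would actually use for the equality of weight lattices is Proposition~\ref{Prop:1.5.4} itself, applied to the pair $(G, G/H)$ on one hand and $(L_1, L_1/S_1)$ on the other. For $(G,G/H)$ with $H$ reductive we have $Q=G$, $M=G$, $\Rad_u(\q)/\Rad_u(\h)=0$, so that proposition is vacuous; the nontrivial input is rather that the homogeneous-bundle statement $\X_{G,G*_HV}=\X_{L_1,\pi^{-1}(x)}$ is the restriction-to-distinguished-slice identity, and since $\pi^{-1}(x)$ is again an $L_1$-homogeneous vector bundle over the affine homogeneous space $L_1/S_1$, the groups $L_{0\,G,G*_HV}$ and $L_{0\,L_1,\pi^{-1}(x)}$ are computed from the same lattice data. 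Finally I would check the compatibility of the tori: $\widetilde{T}$-type identifications $\X(\widetilde{B})$ vs $\X(B\cap L_1)$ are made so that $\a_{L_1,\pi^{-1}(x)}=\a_{G,G*_HV}$ by definition (the $\a$'s are defined to be stable under these reductions, cf.\ the convention $\a_{\widetilde G,X}=\a_{G,X}$), whence $Z_{L_1}(\a)=Z_G(\a)\cap L_1$ and the two definitions of $L_0$ match on the nose.

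\textbf{Main obstacle.} The delicate point is the claim that $\pi$ sends the distinguished component of $(G*_HV)^{L_1}$ onto the distinguished component of $(G/H)^{L_1}$, and correspondingly that $\pi^{-1}(x)$ — a fiber of the original bundle taken over a point of the base fixed by $L_1$ — coincides with the distinguished slice of $G*_HV$ up to the $U$-action; equivalently, that the distinguished component of $(G*_HV)^{L_1}$ lies inside a single fiber $\pi^{-1}(Gx\cap(G/H)^{L_1})$ and surjects onto the distinguished component downstairs. I expect this to require the characterization of distinguished components via $U$-sweeps from \cite{comb_ham}, Proposition~8.4, together with the openness and fiber-irreducibility of $\pi$, exactly as in the proof of \cite{Weyl}, Proposition~3.2.12, which I would follow verbatim.
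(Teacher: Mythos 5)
The paper itself gives no argument here beyond the citation of the analogue in \cite{Weyl}, Proposition 3.2.12, so your plan to follow that proof is in principle the right move; but your setup contains a genuine error at the very first step. You invoke ``the distinguished component $\underline{G*_HV}\subset (G*_HV)^{L_1}$'', which requires $L_1=L_{0\,G,G/H}$ to be a normal subgroup of $L_{0\,G,G*_HV}$ (Definition \ref{Def:1.5.7}). The containment goes the other way: since $\pi^*$ embeds $\C(G/H)^{(B)}$ into $\C(G*_HV)^{(B)}$, one has $\X_{G,G/H}\subset\X_{G,G*_HV}$, hence $\a_{G,G/H}\subset\a_{G,G*_HV}$ and $L_{0\,G,G*_HV}\subset L_1$, in general strictly --- indeed that is the whole point of the proposition, which computes the smaller group from the larger one. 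Consequently no component of $(G*_HV)^{L_1}$ with dense $U$-sweep need exist: take $G=H=\SL_2$ and $V=\C^2$ the tautological module; then $G/H$ is a point, $L_1=G$, $(G*_HV)^{L_1}=V^{\SL_2}=\{0\}$, and $U\cdot\{0\}$ is not dense in $\C^2$. So your ``key geometric observation'' --- that $\pi$ carries the distinguished component upstairs onto the one downstairs --- starts from an object that does not exist, and everything built on it (the identification of $\pi^{-1}(x)$ with a distinguished slice of $G*_HV$, the restriction isomorphism $\C(G*_HV)^{(B)}\to\C(\underline{G*_HV})^{(B\cap L_1)}$) collapses.

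The correct mechanism, which you partly gesture at, uses only the distinguished component $\underline{X}_0$ of $(G/H)^{L_1}$ downstairs: $U\underline{X}_0$ is dense in $G/H$, hence $U\pi^{-1}(\underline{X}_0)$ is dense in $G*_HV$, and one restricts $B$-semiinvariants of $\C(G*_HV)$ to the single fiber $\pi^{-1}(x)$, which is an $L_1$-module because $L_1$ fixes $x$ (your $L_1*_{S_1}V'$ with $S_1=(L_1)_x$ is just $V'$, since $S_1=L_1$; there is no homogeneous bundle structure to speak of). Even at that point you are too quick: $\X_{L_1,\pi^{-1}(x)}$ lives in $\X(\widetilde{B}\cap L_1)$, a quotient of a sublattice of $\X(T)$, so it cannot literally equal $\X_{G,G*_HV}\subset\X(T)$; what must be proved is that $\X_{L_1,\pi^{-1}(x)}$ is the image of $\X_{G,G*_HV}$ under restriction of characters and that the annihilators inside the respective centralizers $Z_G(\a_{G,G*_HV})$ and $Z_{L_1}(\a_{L_1,\pi^{-1}(x)})$ coincide. ``The two definitions of $L_0$ match on the nose'' is precisely the content that needs an argument, not a remark.
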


The last proposition reduces the computation of $\X_{G,G*_HV}$ to
the following problems:
\begin{enumerate}
\item To determine the lattice $\X_{G,G/H}$, equivalently, the
group $L_{0\,G,G/H}$ for all reductive subgroups $H\subset G$.
\item To find a point from the distinguished component of $(G/H)^{L_{0\,G,G/H}}$
for all reductive subgroups $H\subset G$.
\item To find the group $L_{0\,\widetilde{G},V}$ for almost
connected group $\widetilde{G}$ and a $\widetilde{G}$-module $V$.
\end{enumerate}

There is an algorithm solving the third problem. It is presented,
for instance, in~\cite{Panyushev_thesis} for connected
$\widetilde{G}$. This algorithm  can be generalized directly to the
general case. We quote this algorithm in
Section~\ref{SECTION_algorithm}.

Next, we reduce the computation of $\X_{G,X}$ and the determination
of a point in the distinguished component to the case, where $X$ is
an affine homogeneous space such that $\rank_G(G/H)=\rank(G)$.

\begin{Prop}\label{Prop:1.5.3}
Let $X$ be a smooth quasiaffine $G$-variety, $L_0:=L_{0\,G,X}$, and
$\underline{X}$ the distinguished component of $X^{L^\circ_0}$. Set
$\underline{G}=N_G(L^\circ_0,\underline{X})/L^\circ_0$. Then
$\X_{G,X}=\X_{\underline{G}^\circ,\underline{X}}$ and the
distinguished components of $X^{L_0}$ and
$\underline{X}^{L_0/L_0^\circ}$ coincide. Here $\underline{X}$ is
considered as a $\underline{G}$-variety. \end{Prop}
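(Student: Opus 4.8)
The plan is to reduce everything to already-known statements about distinguished components and central automorphisms, exploiting the fact that passing to $\underline{X}$ does not change the weight lattice. First I would recall from \cite{comb_ham}, Proposition 8.4, that the distinguished component $\underline{X}\subset X^{L_0^\circ}$ satisfies $\overline{U\underline X}=X$; this already forces a $T$-equivariant inclusion $\C(X)^{(B)}\hookrightarrow \C(\underline X)^{(B\cap N_G(L_0^\circ,\underline X))}$ by restriction of functions, since a $B$-semiinvariant rational function on $X$ is determined by its restriction to the dense set $U\underline X$, and its restriction to $\underline X$ is semiinvariant for the stabilizer of $\underline X$ in $B$. Conversely, any semiinvariant on $\underline X$ extends (uniquely) to a $U$-invariant, hence to a $B$-semiinvariant, function on $\overline{U\underline X}=X$. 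This gives the equality of lattices $\X_{G,X}=\X_{\underline G^\circ,\underline X}$ once one checks that the relevant Borel of $\underline G^\circ=N_G(L_0^\circ,\underline X)/L_0^\circ$ is the image of $B\cap N_G(L_0^\circ,\underline X)$ and that its character lattice matches $\X(B)$ restricted appropriately — here one uses that $L_0^\circ\subset Z_G(\a_{G,X})$ acts trivially on all the relevant semiinvariants by the very definition \eqref{eq:0.4:2} of $L_{0\,G,X}$, so dividing by $L_0^\circ$ loses nothing.

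Next, for the statement about distinguished components, I would argue that the distinguished component is characterized purely in terms of the $U$-action and the fixed locus, so it is compatible with the two-step process of first taking fixed points for $L_0^\circ$ and then for $L_0/L_0^\circ$. Concretely, $X^{L_0}=(X^{L_0^\circ})^{L_0/L_0^\circ}$, and since $\underline X$ is the unique component of $X^{L_0^\circ}$ with $\overline{U\underline X}=X$, the $L_0/L_0^\circ$-fixed locus inside $X$ is the same as the $L_0/L_0^\circ$-fixed locus inside $\underline X$ together with fixed loci in the other components. The distinguished component of $X^{L_0}$, being the one whose $U$-saturation is dense in $X$, must lie inside $\underline X$ (otherwise its $U$-saturation could not be dense, since $U\underline X$ is already dense and the other components have strictly smaller $U$-saturation). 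Within $\underline X$, the subgroup acting is $\underline G$ with unipotent radical the image $\bar U$ of $U\cap N_G(L_0^\circ,\underline X)$, and one checks $\overline{\bar U \cdot Y}=\underline X$ for the distinguished component $Y\subset \underline X^{L_0/L_0^\circ}$ using $\overline{U\underline X}=X$ again. So the two distinguished components coincide as subvarieties of $X$.

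The main obstacle I expect is the careful bookkeeping of Borel subgroups and character lattices when passing to the quotient $\underline G=N_G(L_0^\circ,\underline X)/L_0^\circ$: one must verify that $N_G(L_0^\circ,\underline X)$ contains a Borel-compatible torus and unipotent part, that $B\cap N_G(L_0^\circ,\underline X)$ maps onto a Borel of $\underline G^\circ$, and that the induced map on character lattices identifies $\X_{G,X}$ (a sublattice of $\X(B)=\X(T)$) with $\X_{\underline G^\circ,\underline X}$ (a sublattice of $\X$ of a maximal torus of $\underline G^\circ$). The key structural input is that $T\subset L_{G,X}=Z_G(\a_{G,X})$ normalizes things correctly and that $L_0^\circ\subset L_{G,X}$, so $T$ descends to a maximal torus of $\underline G^\circ$ and its character lattice is exactly $\X(T)/(\text{characters trivial on }L_0^\circ\cap T)$, on which $\X_{G,X}$ injects by definition of $L_{0\,G,X}$. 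Once this identification is in place, the two restriction maps between semiinvariant fields are mutually inverse isomorphisms of $T$-modules, giving $\X_{G,X}=\X_{\underline G^\circ,\underline X}$; and the statement about distinguished components follows by uniqueness of the component with dense $U$-saturation. I would also invoke Proposition \ref{Prop:1.5.8}-style arguments, or rather the cited \cite{Weyl}, Proposition 3.2.12 and \cite{comb_ham}, Proposition 8.4, to handle the smoothness and quasiaffineness hypotheses that guarantee the distinguished component is well-defined at each stage.
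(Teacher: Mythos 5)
Your treatment of the second claim (identification of the distinguished components) is essentially the paper's own argument: the paper takes the distinguished component $\underline{X}'$ of $\underline{X}^{L_0/L_0^\circ}$, notes that $(U\cap N_G(L_0^\circ,\underline{X}))\underline{X}'$ is dense in $\underline{X}$ and hence $U\underline{X}'$ is dense in $X$, and concludes by uniqueness. Your version runs the implication in the opposite direction but rests on the same uniqueness statement from \cite{comb_ham}, Proposition 8.4, and is fine.

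The first claim is where you have a genuine gap. The paper does not reprove $\X_{G,X}=\X_{\underline{G}^\circ,\underline{X}}$; it cites \cite{comb_ham}, Theorem 8.7, and the reason is precisely the step you assert without justification: ``any semiinvariant on $\underline{X}$ extends (uniquely) to a $U$-invariant, hence $B$-semiinvariant, function on $\overline{U\underline{X}}=X$.'' The restriction direction (giving $\X_{G,X}\subset\X_{\underline{G}^\circ,\underline{X}}$ after the torus bookkeeping) is indeed easy, but the extension direction is the whole content. The naive recipe $F(ux):=f(x)$ for $u\in U$, $x\in\underline{X}$ is not a priori well defined: if $u_1x_1=u_2x_2$ with $x_1,x_2\in\underline{X}$, the element $u_2^{-1}u_1$ carries one point of $\underline{X}$ to another but need not lie in $N_G(L_0^\circ,\underline{X})$ (it only conjugates $L_0^\circ$ into $G_{x_1}$), so nothing forces $f(x_1)=f(x_2)$; and even granting set-theoretic consistency on $U\underline{X}$ one must still check that $F$ is a rational function on $X$. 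Making this work requires the local structure theorem (an open $B$-stable chart of the form $\Rad_u(Q)\times Z$ with $\underline{X}$ meeting $Z$ correctly), which is exactly what goes into \cite{comb_ham}, Theorem 8.7. So either you must import that theorem as the paper does, or supply the local-structure argument explicitly; as written, the ``conversely'' sentence is an assertion of the hardest part of the statement rather than a proof of it.
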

\begin{proof}
The equality of the weight lattices was proved in \cite{comb_ham},
Theorem 8.7. Further, $\underline{X}^{L_0/L_0^\circ}$ is a union of
components of $X^{L_0}$. Let $\underline{X}'$ be the distinguished
component of $\underline{X}^{L_0/L_0^\circ}$. Then $(U\cap
N_G(L_0^\circ,\underline{X}))\underline{X}'$ is dense in
$\underline{X}$. It follows that $U\underline{X}'$ is dense in $X$
whence $\underline{X}'$ is the distinguished component of $X^{L_0}$.
\end{proof}

Now let $X=G/H$ be an affine homogeneous space. The Cartan spaces
$\a_{G,X}$ were computed in \cite{ranks}, distinguished components
$\underline{X}$ were determined in \cite{Weyl}, Section 4. It turns
out that $\underline{X}$ is an affine homogeneous
$\underline{G}^\circ$-space. So one can reduce the computation of
$\X_{G,X}$ to the case when $X$ is an affine homogeneous space of
rank $\rank(G)$.

\section{Root lattices of Hamiltonian actions}\label{SECTION_Ham}
\subsection{Introduction}
In the first subsection we define Hamiltonian actions in the
algebraic context, give some examples, state the symplectic slice
theorem that provides a local description of an affine Hamiltonian
varieties. Finally, we define an important special class of affine
Hamiltonian varieties: conical varieties.

In the second subsection we recall some definitions and results
related to Weyl groups, weight and root lattices of Hamiltonian
varieties. For simplicity, we consider only Hamiltonian
$G$-varieties $X$ such that $m_G(X)=\dim G$. After giving all
necessary definitions we state the comparison theorem (Theorem
\ref{Thm:1.22}) that relates the Weyl group, the weight and root
lattices of an affine $G$-variety $X_0$ with those of $X=T^*X_0$.
Finally, we state several technical results to be used in Subsection
\ref{SUBSECTION_root1}. The most important among them are
Propositions \ref{Prop:4.6.5},\ref{Prop:4.6.3}. Subsections
\ref{SUBSECTION_Ham_prelim},\ref{SUBSECTION_Ham_root} do not contain
new results.

In Subsection \ref{SUBSECTION_root1} we study root lattices of
affine Hamiltonian $G$-varieties from a certain class. This class
includes all cotangent bundles $T^*X_0$, where $X_0$ is an affine
$G$-variety such that $\rank_G(X_0)=\rank G$. These results
(Propositions \ref{Prop:7.1.4},\ref{Prop:7.1.8}, Corollary
\ref{Cor:7.1.7}) play a crucial role in the proof of Theorem
\ref{Thm:7.0.2}.

\subsection{Preliminaries}\label{SUBSECTION_Ham_prelim}
Let $X$ be a symplectic variety with symplectic form $\omega$ and
$G$ a reductive algebraic group acting on $X$ by symplectomorphisms.
This action is called {\it Hamiltonian} if it is equipped with a
linear $G$-equivariant map $\g\rightarrow \C[X], \xi\mapsto H_\xi,$
such that $\{H_\xi,f\}=\xi_* f$. Here $\xi_*$ denotes the velocity
vector field associated with $\xi$. The variety $X$ is called a {\it
Hamiltonian $G$-variety}. The {\it moment map} of $X$ is the
morphism $X\rightarrow \g^*$ defined by
$\langle\mu_{G,X}(x),\xi\rangle=H_\xi(x)$. %This morphism is
%$G$-equivariant and satisfies
%$$\langle d_x\mu(v),\xi\rangle=\omega(\xi_*x,v), \forall x\in X, v\in T_xX, \xi\in\g.$$
In the sequel we fix a $G$-invariant nondegenerate symmetric form
$(\cdot,\cdot)$ on $\g$ and identify $\g$ with $\g^*$.

We say that an irreducible Hamiltonian $G$-variety $X$ is {\it
coisotropic} if a $G$-orbit of $X$ in general position is a
coisotropic $G$-variety. If $m_G(X)=\dim G$, then $X$ is coisotropic
iff $\dim X=\dim G+\rank G$, see, for instance, \cite{fibers}, Lemma
2.22.

Let us present three examples of Hamiltonian $G$-varieties.

\begin{Ex}[Symplectic vector spaces]\label{Ex:2.1.5}  Let $V$ be a symplectic vector space and
 $G$ be a reductive group acting on $V$ by linear
symplectomorphisms. Then the action $G:V$ is Hamiltonian. The moment
map $\mu_{G,V}$ is given by  $\langle\mu_{G,V}(v), \xi\rangle =
\frac{1}{2}\omega(\xi v,v), \xi\in\g, v\in V$.
\end{Ex}

\begin{Ex}[Cotangent bundles]\label{Ex:2.1.4} Let $Y$ be a smooth $G$-variety. Let $X$ be the cotangent
bundle of $Y$. Then $X$ is a symplectic algebraic variety. The
action of $G$ on $X$ is Hamiltonian. The moment map is given by
$\langle\mu_{G,X}((y,\alpha)), \xi\rangle=\langle \alpha,
\xi_{*}y\rangle$. Here $y\in Y, \alpha\in T^*_yY,\xi\in\g$.
\end{Ex}

\begin{Ex}[Model varieties]\label{Ex:2.1.6}
This example was introduced in \cite{slice}. It generalizes Example
\ref{Ex:2.1.5} and partially Example \ref{Ex:2.1.4}.
 Let $H$  be a reductive subgroup of $G$, $\eta\in \g^H$, $V$ a symplectic $H$-module. Put $U=(\z_\g(\eta)/\h)^*$.
 There is a certain closed $G$-invariant 2-form
$\omega$ on the homogeneous vector bundle $X=G*_H(U\oplus V)$
depending on the choice of an $\sl_2$-triple $(\eta_n,h,f)$ in
$\z_\g(\eta_s)^H$, see \cite{slice} or \cite{fibers}, Example 2.5.
By the model variety $M_G(H,\eta,V)$ we mean the set of all points
of $X$, where $\omega$ is nondegenerate. It was proved in
\cite{slice} that $G/H\subset M_G(H,\eta,V)$ and $X=M_G(H,\eta,V)$
for nilpotent $\eta$. By the base point of $M_G(H,\eta,V)$ we mean
$[1,(0,0)]\in G*_{H}(U\oplus V)$.
 The moment map of $M_G(H,\eta,V)$ is
constructed as follows. Identify $U$ with
$\z_\g(\eta_s+f)\cap\h^\perp$ by means of $(\cdot,\cdot)$. Then
$$\mu_{G,M_G(H,\eta,V)}([g,(u,v)])=\Ad(g)(\eta+u+\mu_{H,V}(v)).$$
Actually, the Hamiltonian structure on $M_G(H,\eta,V)$ does not
depend on the choice of $h,f$ up to an isomorphism.

If $\eta=0,H=G$ (resp., $\eta=0, V=\{0\}$), $M_G(H,\eta,V)$ is the
symplectic vector space $V$ (resp., the cotangent bundle
$T^*(G/H)$).
\end{Ex}

Let us explain why the previous example is important. Let $X$ be an
affine Hamiltonian $G$-variety and $x$ a point in $X$ with closed
$G$-orbit. It turns out that in a small neighborhood of $x$ the
variety $X$ looks like a model variety.

To state a precise result we define some invariants of the triple
$(G,X,x)$. Put $H=G_x, \eta=\mu_{G,X}(x)$. The subgroup $H\subset G$
is reductive and $\eta\in \g^H$. Put $V=(\g_*x)^\skewperp/(\g_*x\cap
\g_*x^\skewperp)$. This is a symplectic $H$-module. We say that
$(H,\eta,V)$  is the {\it determining triple} of  $X$ at $x$. For
example, the determining triple of $X=M_G(H,\eta,V)$ in
$x=[1,(0,0)]$ is $(H,\eta,V)$, see \cite{slice}, assertion 4 of
Proposition 1.

\begin{defi}\label{defi:4.3.1}
Let $X_1,X_2$ be affine Hamiltonian $G$-varieties, $x_1\in X_1,
x_2\in X_2$ be points with closed  $G$-orbits. The pairs
$(X_1,x_1),(X_2,x_2)$ are called {\it analytically equivalent}, if
there are saturated open analytical neighborhoods $O_1,O_2$ of
$x_1\in X_1, x_2\in X_2$, respectively, and an isomorphism
$O_1\rightarrow O_2$ of complex-analytical Hamiltonian $G$-manifolds
that maps $x_1$ to $x_2$.
\end{defi}

Recall that a subset of an affine $G$-variety $X$ is said to be {\it
saturated} if it is the union of fibers of $\pi_{G,X}$.

\begin{Rem}\label{Rem:4.3.2}
An open saturated analytical neighborhood in  $X$ is the inverse
image of an {\it open} analytical neighborhood in $X\quo G$ under
$\pi_{G,X}$. See, for example, \cite{slice}, Lemma 5.
\end{Rem}

\begin{Prop}[Symplectic slice theorem, \cite{slice}]\label{Prop:4.3.3}
Let $X$ be an affine Hamiltonian  $G$-variety,  $x\in X$  a point
with closed $G$-orbit, $(H,\eta,V)$  the determining triple of $X$
at $x$, and $x'$  the base point of $M_G(H,\eta,V)$. Then the pair
$(X,x)$ is analytically equivalent to the pair
$(M_{G}(H,\eta,V),x')$.
\end{Prop}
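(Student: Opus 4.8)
The plan is to deduce the statement from Luna's étale slice theorem applied in the complex-analytic category, upgrading the resulting $G$-equivariant isomorphism of neighborhoods to an isomorphism of Hamiltonian $G$-manifolds. First I would recall that, since $x$ has closed $G$-orbit and $H=G_x$ is reductive (Matsushima), Luna's slice theorem produces a $G$-stable locally closed smooth subvariety $N\ni x$ on which $H$ acts, with $T_xN$ an $H$-stable complement to $\g_*x$ in $T_xX$, together with a $G$-equivariant étale morphism $G*_H N\to X$ that is an analytic isomorphism onto a saturated open neighborhood of $Gx$ after passing to a suitable saturated open neighborhood (using Remark~\ref{Rem:4.3.2}). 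The second step is to further shrink and linearize: replacing $N$ by a saturated neighborhood of $x$ in it and applying the slice theorem once more to the $H$-variety $N$ at the fixed point $x$, one gets an $H$-equivariant analytic isomorphism between a neighborhood of $x$ in $N$ and a neighborhood of $0$ in $T_xN$. Thus $(X,x)$ becomes analytically $G$-isomorphic, as a $G$-manifold, to a neighborhood of $[1,0]$ in $G*_H T_xN$.

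The third step is to identify the $H$-module $T_xN$ and the Hamiltonian data. By construction $T_xN\cong T_xX/\g_*x$. Decompose $T_xX$ using the symplectic form $\omega_x$: the subspace $\g_*x$ has skew-orthogonal complement $(\g_*x)^{\skewperp}$, and $(\g_*x)^{\skewperp}/(\g_*x\cap(\g_*x)^{\skewperp})$ is by definition the symplectic $H$-module $V$ appearing in the determining triple, while $\g_*x/(\g_*x\cap(\g_*x)^{\skewperp})\cong(\z_\g(\eta)/\h)^*=U$ via $\omega_x$ and the identification $\g\cong\g^*$ (here one uses that the kernel of $\xi\mapsto\omega_x(\xi_*x,\cdot)$ on $\g$ is exactly $\g_x=\h$ and that the image pairs nondegenerately with $\g_*x$, so the relevant quotient is $\z_\g(\eta)/\h$). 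Hence $T_xN\cong U\oplus V$ as $H$-modules, so $(X,x)$ is $G$-analytically isomorphic to a neighborhood of the base point in $G*_H(U\oplus V)$, which set-theoretically is a neighborhood of the base point of $M_G(H,\eta,V)$ once we restrict to where the model $2$-form $\omega$ is nondegenerate.

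The final — and genuinely delicate — step is to promote this $G$-manifold isomorphism to a Hamiltonian isomorphism, i.e.\ to match the symplectic forms and moment maps. This is the main obstacle. The strategy is an equivariant Darboux–Moser argument: both $X$ (transported) and $M_G(H,\eta,V)$ carry closed $G$-invariant $2$-forms on the same $G$-manifold germ $G*_H(U\oplus V)$, agreeing at the base point (by the construction of $\omega$ in \cite{slice}, whose restriction to $T_{x'}$ is precisely $\omega_x$ under our identifications) and both having moment map with value $\eta$ there; one interpolates $\omega_t=(1-t)\omega_0+t\omega_1$, checks each $\omega_t$ is symplectic near the base point and admits a $G$-invariant moment map, writes $\omega_1-\omega_0=d\beta$ with $\beta$ a $G$-invariant $1$-form vanishing to appropriate order at $x'$ (obtained by an equivariant homotopy/Poincaré operator), and integrates the time-dependent $G$-invariant vector field $Z_t$ defined by $\iota_{Z_t}\omega_t=-\beta$. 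The resulting flow is a $G$-equivariant symplectomorphism fixing $x'$; a standard check, using the $G$-invariance and the normalization of the moment maps at $x'$, shows it also intertwines the moment maps. Care is needed to keep all neighborhoods saturated so that the final isomorphism is an isomorphism of Hamiltonian $G$-manifolds in the sense of Definition~\ref{defi:4.3.1}; for this one works over an open neighborhood in the quotient, as in Remark~\ref{Rem:4.3.2}. This is exactly the argument carried out in \cite{slice}, so in the paper it suffices to invoke that reference after setting up the identification of the determining triple.
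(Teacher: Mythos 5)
The paper offers no proof of this proposition: it is quoted as a known result, with the proof residing entirely in \cite{slice}. So there is no in-paper argument to compare against, and your closing observation that the paper only needs to invoke that reference is exactly what happens. Your sketch (Luna's \'etale slice theorem transported to the analytic category via Remark~\ref{Rem:4.3.2}, identification of the slice module with $U\oplus V$, then an equivariant Darboux--Moser deformation to match the symplectic forms and moment maps on a saturated neighborhood) is the standard route and is essentially how \cite{slice} proceeds.

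One identification in your third step is wrong, although the conclusion survives. You assert $\g_*x/(\g_*x\cap(\g_*x)^{\skewperp})\cong(\z_\g(\eta)/\h)^*=U$. That quotient is the symplectic reduction of the orbit direction: since $\omega_x(\xi_*x,\cdot)$ vanishes on $\g_*x$ precisely when $\xi\in\z_\g(\eta)$ (as $\omega_x(\xi_*x,\eta'_*x)=\pm\langle\mu_{G,X}(x),[\xi,\eta']\rangle$), one has $\g_*x\cap(\g_*x)^{\skewperp}\cong\z_\g(\eta)/\h$, so your quotient has dimension $\dim\g-\dim\z_\g(\eta)=\dim G\eta$, not $\dim\z_\g(\eta)-\dim\h=\dim U$; moreover it is a quotient of $\g_*x$, which is exactly what gets killed in forming the slice module $T_xN\cong T_xX/\g_*x$. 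The correct identification is that $\omega_x$ induces a nondegenerate pairing between $\g_*x\cap(\g_*x)^{\skewperp}$ and $T_xX/(\g_*x+(\g_*x)^{\skewperp})$, whence $T_xX/(\g_*x+(\g_*x)^{\skewperp})\cong(\z_\g(\eta)/\h)^*=U$; the slice module is then an extension of $U$ by $V\cong(\g_*x+(\g_*x)^{\skewperp})/\g_*x$, which splits $H$-equivariantly because $H$ is reductive. With that correction the outline is sound.
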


In the sequel we will often consider Hamiltonian varieties equipped
with an action of $\C^\times$ satisfying some compatibility
conditions. Here is the precise definition.

\begin{defi}\label{defi:2.2.1}
An affine Hamiltonian $G$-variety $X$ equipped with an action
$\C^\times:X$ commuting with the action of $G$ is said to be {\it
conical}  if the following  conditions (Con1),(Con2)  are fulfilled
\begin{itemize}
\item[(Con1)] The morphism $\C^\times\times X\quo G\rightarrow X\quo
G, (t,\pi_{G,X}(x))\mapsto \pi_{G,X}(tx),$ can be extended to a
morphism $\C\times X\quo G\rightarrow X\quo G$.
\item[(Con2)] There exists a positive integer $k$ (called the {\it degree} of $X$) such that
$t.\omega=t^{k}\omega$ and $\mu_{G,X}(tx)=t^k\mu_{G,X}(x)$  for all
$t\in \C^\times, x\in X$.
\end{itemize}
\end{defi}

\begin{Ex}[Cotangent bundles]\label{Ex:2.2.2}
Let $Y,X$ be such as in Example \ref{Ex:2.1.4}. The variety $X$ is a
vector bundle over $Y$. The action $\C^\times:X$ by the fiberwise
multiplication turns $X$ into a conical variety of degree 1.
\end{Ex}

\begin{Ex}[Model varieties]\label{Ex:2.2.4}
Let $H,\eta,V$ be such as in Example \ref{Ex:2.1.6} and
$X=M_G(H,\eta,V)$. Suppose that $\eta$ is nilpotent. Here we define
an action $\C^\times:X$ turning $X$ into a conical Hamiltonian
variety of degree 2. Let $(\eta,h,f)$ be an $\sl_2$-triple in
$\g^H$. Note that $h$ is the image of a coroot under an embedding of
Lie algebras.
 In particular, there
exists a one-parameter subgroup $\gamma:\C^\times\rightarrow G$ with
$\frac{d}{dt}|_{t=0}\gamma=h$. Since $[h,\h]=0, [h,f]=-2f$, we see
that  $\gamma(t)(\h^\perp)=\h^\perp,\gamma(t)(U)=U$. Define a
morphism $\C^\times\times X\rightarrow X$ by formula
\begin{equation}\label{eq:2.4}
(t, [g,(u,v)])\mapsto [g\gamma(t),t^2\gamma(t)^{-1}u,tv], t\in
\C^\times, g\in G, u\in U, v\in V.
\end{equation}
(Con1),(Con2) were checked in \cite{fibers}, Example 2.16.
\end{Ex}

\subsection{Weyl groups and root lattices for Hamiltonian
varieties}\label{SUBSECTION_Ham_root} In this section $X$ is an
irreducible affine Hamiltonian $G$-variety with symplectic form
$\omega$ such that $m_G(X)=\dim G$. It is known that the last
condition is equivalent to $\overline{\im\mu_{G,X}}=\g$. For a Levi
subalgebra $\l\subset\g$ set $\l^{pr}:=\{\xi\in\l|
\z_\g(\xi_s)\subset\l\}$. Set $X^{pr}:=G\mu_{G,X}^{-1}(\t^{pr})$. By
Propositions 4.1, 4.4 from \cite{comb_ham}, the following claims
take place:
\begin{enumerate}\item the variety $Y:=\mu_{G,X}^{-1}(\l^{pr})$ is smooth,
\item the restriction of $\omega$ to $Y$ is nondegenerate, \item the action
$L:Y$ is Hamiltonian with moment map $\mu_{G,X}|_{Y}$,
\item the natural morphism $G*_{N_G(L)}Y\rightarrow L$ is etale.
Moreover, if $L=T$, then this morphism is an open embedding with
image $X^{pr}$. In particular, the group $N_G(T)$ permutes
transitively connected components of $Y$.
\end{enumerate}
 A component of $\mu_{G,X}^{-1}(\l^{pr})$ is said to be an $L$-cross-section of $X$. Let
us fix a $T$-cross-section $X_T$. By the Weyl group of the
Hamiltonian variety $X$ (associated with $X_T$) we mean the group
$W_{G,X}^{(X_T)}:=N_G(T,X_T)/T$ considered as a linear group acting
on $\t$.

Set $\psi_{G,X}:=\pi_{G,\g}\circ\mu_{G,X}:X\rightarrow \g\quo G$. It
turns out, see \cite{alg_hamil}, Subsection 5.2, that there is a
unique morphism $\widehat{\psi}_{G,X}:X\rightarrow \t\quo
W_{G,X}^{(X_T)}$ such that $\psi_{G,X}$ is the composition of
$\widehat{\psi}_{G,X}$ and the natural finite morphism $\t\quo
W_{G,X}^{(X_T)}\rightarrow \g\quo G$.

\begin{defi}\label{defi:1.21}
Suppose $X$ is conical. We say that $X$ is {\it untwisted} if
$W_{G,X}^{(X_T)}$ is generated by reflections and the morphism
$\widehat{\psi}_{G,X}$ is smooth in codimension 1 (that is, the
subvariety of singular points of $\widehat{\psi}_{G,X}$ has
codimension at least 2 in $X$).
\end{defi}

Proceed to the definitions of weight and root lattices of $X$. Let
$T_0$ be the inefficiency kernel for the action $T:X_T$. Thanks to
(4), $T_0$ is a discrete subgroup of $T$. By the {\it weight
lattice} of $X$ we mean the annihilator of $T_0$ in $\X(T)$, we
denote the weight lattice by $\X_{G,X}^{(X_T)}$. Finally, let us
define the root lattice of $X$. We say that a Hamiltonian (that is,
$G$-equivariant and preserving $\omega$ and $\mu_{G,X}$)
automorphism $\varphi$ of $X$ is {\it central} if $\varphi(X_T)=X_T$
and the restriction of $\varphi$ to $X_T$ coincides with the
translation by some element $t_\varphi\in T/T_0$. Central
automorphisms form a subgroup of $\Aut^G(X)$ denoted by
$\A_{G,X}^{(\cdot)}$. This subgroup does not depend on the choice of
$X_T$. The map $\A_{G,X}^{(\cdot)}\rightarrow T/T_0, \varphi\mapsto
t_\varphi$ is injective, its image $\A_{G,X}^{(X_T)}$ is closed, see
\cite{comb_ham}, Corollary 5.7. By the root lattice of $X$ (denoted
$\Lambda_{G,X}^{(X_T)}$) we mean the annihilator of
$\A_{G,X}^{(X_T)}$ in $\X(T)$.

\begin{Thm}\label{Thm:1.22}
Let $X_0$ be an irreducible smooth affine $G$-variety of rank $\rank
G$. Set $X:=T^*X_0$. Then the following conditions hold:
\begin{enumerate}
\item
$m_G(X)=\dim G$. \item  $X$ is untwisted. \item There is a
$T$-cross-section $\Sigma$ of $X$ such that
$W_{G,X}^{(\Sigma)}=W_{G,X_0}, \X_{G,X}^{(\Sigma)}=\X_{G,X_0},
\Lambda_{G,X}^{(\Sigma)}=\Lambda_{G,X_0}$.
\item $L_{0\,G,X_0}$ is the stabilizer in general position for the
action $G:X$.
\item $X$ is coisotropic iff $X_0$ is spherical, that is, $B$ has an
open orbit on $X$.
\end{enumerate}
\end{Thm}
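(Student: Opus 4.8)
The plan is to prove the five assertions of Theorem~\ref{Thm:1.22} essentially by combining the symplectic slice theorem with the general theory of cotangent bundles, reducing each statement about $X=T^*X_0$ to a statement about $X_0$ or about a model variety. First I would record the basic fact, due to Knop, that for a cotangent bundle $X=T^*X_0$ the generic isotropy algebra and the generic coisotropy of orbits are governed by the $B$-action on $X_0$; concretely, the image of the moment map is $\overline{G\cdot(\g/\b)^{(?)}}$ and the closure of $\im\mu_{G,X}$ equals $\g$ precisely because $\rank_G(X_0)=\rank G$ forces $\mu_{G,X}$ to be dominant. This gives (1), since $m_G(X)=\dim G$ is equivalent to $\overline{\im\mu_{G,X}}=\g$ as noted in the text. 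For (4), I would use the description of the moment map of a cotangent bundle (Example~\ref{Ex:2.1.4}): a point $(y,\alpha)$ in general position has $\mu_{G,X}(y,\alpha)$ regular semisimple, its stabilizer is $Z_G(\mu_{G,X}(y,\alpha))\cap G_y$, and a dimension count together with the rank hypothesis identifies the stabilizer in general position with $L_{0\,G,X_0}=\{g\in Z_G(\a_{G,X_0})\mid \chi(g)=1\ \forall\chi\in\X_{G,X_0}\}$; here one invokes the computation of $\a_{G,X_0}$ and of the generic isotropy group on $X_0$ from \cite{ranks} and the definitions \eqref{eq:0.4:1}, \eqref{eq:0.4:2}.

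Next, for the comparison statement (3), the idea is to choose the $T$-cross-section $\Sigma$ of $X=T^*X_0$ compatibly with the structure of $X_0$ as a $G$-variety, so that $\Sigma$ "is" a cotangent bundle over a $T$-cross-section-like object attached to $X_0$. More precisely, let $B$ act on $X_0$; the rank hypothesis means $B$ has a dense orbit on $X_0$ of codimension $\rank G$ in $X_0$, and the Weyl group $W_{G,X_0}$, weight lattice $\X_{G,X_0}$ and $\A_{G,X_0}$, $\Lambda_{G,X_0}$ are defined via the $L_{0\,G,X_0}$-fixed (distinguished) subvariety $\underline{X_0}$ and its normalizer, exactly as in Proposition~\ref{Prop:1.5.3}. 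One identifies $\mu_{G,X}^{-1}(\t^{pr})$ with $T^*$ of (an open subset of) the distinguished component $\underline{X_0}$, using that over $\t^{pr}$ the moment map fibers are modeled on cotangent directions transverse to $G$-orbits; the normalizer group $N_G(T,\Sigma)$ is then forced to coincide with $N_G(L_{0\,G,X_0}^\circ,\underline{X_0})$ modulo the torus, which by definition has image $W_{G,X_0}$ in $\GL(\t)=\GL(\a_{G,X_0})$. The matching of weight lattices then comes from the fact that the inefficiency kernel $T_0$ of $T:\Sigma$ equals the inefficiency of $L_{0\,G,X_0}/L_{0\,G,X_0}^\circ$ acting on $\underline{X_0}$, which by Proposition~\ref{Prop:1.5.3} is exactly $\X_{G,X_0}=\X_{G,X}^{(\Sigma)}$; and the matching of root lattices follows because central automorphisms of $X$ that preserve $\Sigma$ correspond to central automorphisms of $X_0$ in the sense of Definition~\ref{Def:1.3.1}, via $\A_{G,X_0}\subset\Aut^G(X_0)$ lifting to $T^*X_0$ by functoriality of the cotangent construction. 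I would also check (2), untwistedness, at this point: $W_{G,X}^{(\Sigma)}=W_{G,X_0}$ is a reflection group by \cite{Weyl}, Theorem 1.1.4 combined with Proposition~\ref{Prop:1.4.8}, and smoothness of $\widehat{\psi}_{G,X}$ in codimension 1 is a known property of cotangent bundles (the only failure of smoothness happens over loci of codimension $\geq 2$ corresponding to non-minimal Levi cross-sections), which one reads off from \cite{alg_hamil} or \cite{comb_ham} together with the conical structure of Example~\ref{Ex:2.2.2}.

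Finally, for (5), I would invoke the equivalence already quoted in the text: since $m_G(X)=\dim G$ by part (1), $X$ is coisotropic iff $\dim X=\dim G+\rank G$, i.e.\ iff $\dim X_0=\tfrac12(\dim G+\rank G)$. On the other hand, $X_0$ is spherical iff $\dim B x=\dim X_0$ for $x$ in general position, i.e.\ iff $\dim X_0=\dim B-(\text{generic stabilizer dim})$; one translates both conditions into the statement that the generic $B$-orbit on $X_0$ is dense, using $\dim T^*X_0=2\dim X_0$ and the standard fact that $X_0$ is spherical iff $\rank_G(X_0)+\text{(defect)}$ collapses appropriately — more efficiently, I would just cite that $X$ coisotropic $\Leftrightarrow$ the generic $G$-orbit on $X$ is coisotropic $\Leftrightarrow$ the doubled moment map has Lagrangian generic fiber, which for a cotangent bundle is classically equivalent to $B:X_0$ having a dense orbit (Vinberg, Knop). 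The main obstacle I anticipate is part (3): one must set up the $T$-cross-section $\Sigma$ of $T^*X_0$ so that its normalizer, inefficiency kernel, and central automorphism group are \emph{simultaneously} identified with the corresponding data of $X_0$; the delicate point is that the distinguished component $\underline{X_0}\subset X_0^{L_{0\,G,X_0}^\circ}$ must be shown to sit inside $\Sigma$ (equivalently, that the cotangent directions transverse to $\underline{X_0}$ in $X_0$ exhaust the generic $\mu$-fiber directions), and that the finite-group actions on both sides (by $N_G(T)/T$ on $\Sigma$ and by $L_{0\,G,X_0}/L_{0\,G,X_0}^\circ$ on $\underline{X_0}$, plus the residual Weyl action) are compatibly matched rather than merely abstractly isomorphic.
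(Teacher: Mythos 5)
The paper does not reprove this theorem: its ``proof'' is a list of citations ((4) is Knop's Korollar 8.2 in \cite{Knop1}, which gives (1); (3) is \cite{Knop3},\cite{Knop8} as packaged in \cite{comb_ham}, Theorem 7.8; (2) follows from (3) plus \cite{Knop2}, Corollary 7.6; (5) is \cite{Knop1}, Satz 7.1). Your sketches of (1), (4) and (5) are acceptable paraphrases of those references, but your proposed argument for part (3) --- the heart of the theorem --- contains a genuine error. You want to identify the $T$-cross-section $\Sigma$, i.e.\ a component of $\mu_{G,X}^{-1}(\t^{pr})$, with $T^*$ of (an open subset of) the distinguished component $\underline{X_0}\subset X_0^{L_{0\,G,X_0}^\circ}$. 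Under the standing hypothesis $\rank_G(X_0)=\rank G$ one has $\a_{G,X_0}=\t$, so $\X_{G,X_0}$ has full rank, $L_{0\,G,X_0}$ is finite, $L_{0\,G,X_0}^\circ=\{1\}$, and $\underline{X_0}=X_0$. Your identification would then force $\dim\Sigma=\dim T^*X_0=\dim X$, whereas a $T$-cross-section has dimension $\dim X-\dim(\g/\t)$. The two agree only when $G=T$. Relatedly, your premise that $W_{G,X_0}$, $\X_{G,X_0}$ and $\Lambda_{G,X_0}$ are ``defined via'' $\underline{X_0}$ and its normalizer is a mischaracterization: Proposition \ref{Prop:1.5.3} is a reduction device, not a definition, and in the present full-rank situation it is vacuous.

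The actual content of (3) is Knop's comparison theorem: the group $N_G(T,\Sigma)/T$ acting on $\t$ is identified with the little Weyl group $W_{G,X_0}$ via the asymptotic behaviour of invariant collective motion (degeneration to the horospherical contraction and analysis of $\C[X_0]^{(B)}$), and the matching of $\X$ and $\Lambda$ uses the structure of central automorphisms from \cite{Knop8}. None of this is recovered by a fibrewise inspection of $\mu^{-1}(\t^{pr})$, and the ``functoriality of the cotangent construction'' you invoke for the root lattices only gives a map $\A_{G,X_0}\to\A^{(\cdot)}_{G,X}$, not the required equality of annihilators. Your own closing caveat correctly locates the delicate point, but the route you propose through it does not work; you should instead cite (or reconstruct) \cite{comb_ham}, Theorem 7.8, as the paper does.
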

\begin{proof}
(4) was proved in \cite{Knop1}, Korollar 8.2. Thence (1). (3)
essentially was proved by Knop in \cite{Knop3},\cite{Knop8}, see
\cite{comb_ham}, Theorem 7.8. Finally, (2) stems from the equality
of the Weyl groups in (3) and \cite{Knop2}, Corollary 7.6 (see also
\cite{fibers}, Theorem 5.7, Remark 5.11). (5) was proved in
\cite{Knop1}, Satz 7.1.
\end{proof}

\begin{Lem}[\cite{comb_ham}, Lemma 6.12]\label{Lem:2.7.1}
The lattices $\Lambda_{G,X}^{(X_T)},\X_{G,X}^{(X_T)}$ are
$W_{G,X}^{(X_T)}$-stable and $w\xi-\xi\in \Lambda_{G,X}^{(X_T)}$ for
all $w\in W_{G,X}^{(X_T)}, \xi\in \X_{G,X}^{(X_T)}$.
\end{Lem}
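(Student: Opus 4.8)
The plan is to reduce both statements to properties of central automorphisms and of the $N_G(T)$-action on the family of $T$-cross-sections, rather than to argue on $\X(T)$ directly. First I would recall the geometric meaning of the two lattices: $\X_{G,X}^{(X_T)}$ is the annihilator of the inefficiency kernel $T_0$ of the action $T:X_T$, so it coincides with $\X(T/T_0)$; and $\Lambda_{G,X}^{(X_T)}$ is the annihilator in $\X(T)$ of the closed subgroup $\A_{G,X}^{(X_T)}\subset T/T_0$. Thus $W$-stability of either lattice (here $W:=W_{G,X}^{(X_T)}=N_G(T,X_T)/T$) is equivalent to $W$-stability of the corresponding subgroup of $T$: it suffices to show that $T_0$ and (the preimage of) $\A_{G,X}^{(X_T)}$ are stable under the conjugation action of $N_G(T,X_T)$ on $T$. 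For $T_0$ this is immediate: $N_G(T,X_T)$ normalizes $T$ and preserves $X_T$, hence conjugates the kernel of $T:X_T$ to itself, so $\X_{G,X}^{(X_T)}$ is $W$-stable. For $\A_{G,X}^{(X_T)}$, I would use that $\A_{G,X}^{(\cdot)}$ is the group of central Hamiltonian automorphisms and, by the cited \cite{comb_ham}, Corollary 5.7, does not depend on the choice of $X_T$; conjugating a central automorphism $\varphi$ by an element $n\in N_G(T,X_T)$ again gives a $G$-equivariant Hamiltonian automorphism that preserves $X_T$ and acts there by the translation $n.t_\varphi$, so $n.\A_{G,X}^{(X_T)}\subset\A_{G,X}^{(X_T)}$; hence its annihilator $\Lambda_{G,X}^{(X_T)}$ is $W$-stable as well.

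For the second assertion, I would show $w\xi-\xi\in\Lambda_{G,X}^{(X_T)}$ for $w\in W$, $\xi\in\X_{G,X}^{(X_T)}$ by testing against $\A_{G,X}^{(X_T)}$: since $\Lambda_{G,X}^{(X_T)}$ is exactly the set of characters of $T/T_0$ that vanish on the closed subgroup $\A_{G,X}^{(X_T)}$, it is enough to check that $w\xi$ and $\xi$ restrict to the same character on $\A_{G,X}^{(X_T)}$. Choose $n\in N_G(T,X_T)$ representing $w$ and $t_\varphi\in T/T_0$ the translation part of $\varphi\in\A_{G,X}^{(X_T)}$. The point is that conjugation by $n$ acts trivially on the subgroup $\A_{G,X}^{(X_T)}\subset T/T_0$: by Lemma \ref{Lem:1.3.4} (its Hamiltonian analogue in \cite{comb_ham}) central automorphisms lie in the center of $\Aut^G(X)$, so $n$ and $\varphi$ commute as automorphisms of $X$, whence $n.t_\varphi=t_\varphi$ in $T/T_0$. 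Evaluating, $(w\xi)(t_\varphi)=\xi(n^{-1}.t_\varphi)=\xi(t_\varphi)$, so $(w\xi-\xi)(t_\varphi)=1$ for every $\varphi$, i.e. $w\xi-\xi$ annihilates $\A_{G,X}^{(X_T)}$ and therefore lies in $\Lambda_{G,X}^{(X_T)}$.

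The one place that needs genuine care — and what I expect to be the main obstacle — is making precise the claim that conjugating a central automorphism by $n\in N_G(T,X_T)$ again preserves $X_T$ with translation part $n.t_\varphi$, and more importantly that $n$ and $\varphi$ commute. The latter is the Hamiltonian counterpart of Lemma \ref{Lem:1.3.4}: one must know that $N_G(T,X_T)$, acting on $X$ through $G$, lands in $\Aut^G(X)$ and that $\A_{G,X}^{(\cdot)}$ is central in $\Aut^G(X)$, which is exactly \cite{comb_ham}, Corollary 5.7 together with the definition of centrality. Once that centralizing property is in hand, both the $W$-stability of $\Lambda_{G,X}^{(X_T)}$ and the relation $w\xi-\xi\in\Lambda_{G,X}^{(X_T)}$ follow formally. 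Since the lemma is quoted verbatim from \cite{comb_ham}, Lemma 6.12, I would in practice simply invoke that reference, but the argument above is the one underlying it.
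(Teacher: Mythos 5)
The paper gives no proof of this lemma; it is quoted verbatim from \cite{comb_ham}, Lemma 6.12, so there is no in-paper argument to compare against. Your reconstruction is correct. The stability of $\X_{G,X}^{(X_T)}$ follows, as you say, because $N_G(T,X_T)$ normalizes the inefficiency kernel $T_0$ of $T:X_T$; and the crux of the remaining two claims is exactly the observation you isolate, namely that conjugation by $n\in N_G(T,X_T)$ acts \emph{trivially} on $\A_{G,X}^{(X_T)}\subset T/T_0$, which simultaneously gives $W$-stability of $\Lambda_{G,X}^{(X_T)}$ and the congruence $w\xi-\xi\in\Lambda_{G,X}^{(X_T)}$ by evaluating characters on translation parts. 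One small simplification: you do not need the Hamiltonian analogue of Lemma \ref{Lem:1.3.4} (centrality of $\A_{G,X}^{(\cdot)}$ in all of $\Aut^G(X)$) to get $n\varphi n^{-1}=\varphi$. Since $n$ acts on $X$ through $G$ and $\varphi$ is by definition $G$-equivariant, the two commute automatically; comparing the restrictions to $X_T$ of $\varphi$ and of $n\varphi n^{-1}$ (the latter being translation by $w\cdot t_\varphi$) and using that $T/T_0$ acts effectively on $X_T$ then yields $w\cdot t_\varphi=t_\varphi$ directly. With that trimming, the argument is complete and self-contained.
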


The following  proposition follows from \cite{fibers}, Propositions
4.1,4.3.

\begin{Prop}\label{Prop:4.6.5}
Let $X,T,X_T$ be such as above and $M$ a Levi subgroup of $G$.
Suppose $0\in\im\widehat{\psi}_{G,X}$.  Let $\xi\in \z(\m)$ be a
point in general position. Then there is a  point $x\in X$
satisfying the following conditions
\begin{itemize}
\item[(a)] $\mu_{G,X}(x)_s\in \z(\m)\cap\m^{pr}$.
\item[(b)] A unique $M$-cross-section $X_M$ of $X$
containing  $x$ contains  $X_T$ and
$\widehat{\psi}_{M,X_M}(x)=\pi_{W_{M,X_M}^{(X_T)},\t}(\xi)$.
\item[(c)] $Gx$ is closed in $X$. Set $\widehat{G}:=(M,M)$. Automatically, $\widehat{G}x$ is closed in $X_M$.
\item[(d)]  The
Hamiltonian $\widehat{G}$-variety
$\widehat{X}:=M_{\widehat{G}}(H\cap \widehat{G},\eta_n,V/V^H)$
 is coisotropic, where $(H,\eta,V)$ is the determining triple of $X_M$ (or, equivalently, of $X$,
 see \cite{fibers}, Lemma 2.27) at $x$.
\item[(e)] $H^\circ\subset \widehat{G}$.
\end{itemize}
\end{Prop}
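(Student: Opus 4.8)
The statement collects together five properties of a single point $x\in X$, and the strategy is to produce $x$ by a limiting/genericity argument inside a suitable $M$-cross-section and then to verify the items one at a time, reducing the genuinely new content to the already-quoted Propositions 4.1, 4.3 of \cite{fibers}. First I would recall the structure of $M$-cross-sections from \cite{comb_ham}, Propositions 4.1, 4.4: the variety $Y_M:=\mu_{G,X}^{-1}(\m^{pr})$ is smooth, $\omega|_{Y_M}$ is nondegenerate, the $M$-action on each component is Hamiltonian with moment map the restriction of $\mu_{G,X}$, and $G*_{N_G(M)}Y_M\to X$ is \'etale. Fix the $T$-cross-section $X_T$ and let $X_M$ be the unique component of $Y_M$ containing $X_T$; then $X_T$ is automatically a $T$-cross-section of the Hamiltonian $M$-variety $X_M$, and one has $W_{M,X_M}^{(X_T)}\subset W_{G,X}^{(X_T)}$ together with the compatibility $\widehat\psi_{G,X}=(\text{finite morphism})\circ\widehat\psi_{M,X_M}$ on $X_M$. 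The hypothesis $0\in\im\widehat\psi_{G,X}$, via this compatibility and the conical structure (scaling toward $0$), guarantees that $0\in\im\widehat\psi_{M,X_M}$ as well.

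Next I would invoke Propositions 4.1 and 4.3 of \cite{fibers} applied to the Hamiltonian $M$-variety $X_M$: these are exactly the results that, given a point $\xi\in\z(\m)$ in general position and the condition $0\in\im\widehat\psi_{M,X_M}$, produce a point $x\in X_M$ whose semisimple part of the moment map image lies in $\z(\m)\cap\m^{pr}$ (this is (a)), with $\widehat\psi_{M,X_M}(x)=\pi_{W_{M,X_M}^{(X_T)},\t}(\xi)$ and with the correct behavior of cross-sections giving (b). Since $\xi\in\z(\m)$ is in general position, $\z_\g(\xi_s)=\m$, so $\mu_{G,X}(x)_s$ is regular enough that $Gx$ meets the closed-orbit locus; more precisely, because $\mu_{G,X}(x)_s\in\m^{pr}$ and the moment map of an affine Hamiltonian variety sends closed $G$-orbits to closed $G$-orbits in $\g$, a general choice of $\xi$ forces $Gx$ closed, and then $\widehat G=(M,M)$-orbit closedness in $X_M$ is automatic from the product decomposition of $M$ acting on $X_M$ — this is (c).

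For (d) and (e) I would pass to the determining triple $(H,\eta,V)$ of $X$ at $x$, which by \cite{fibers}, Lemma 2.27 coincides with the determining triple of $X_M$ at $x$ because $X_M$ is a symplectic cross-section. The symplectic slice theorem (Proposition \ref{Prop:4.3.3}) identifies a neighborhood of $x$ with the model variety $M_G(H,\eta,V)$; restricting to the $(M,M)$-picture and stripping off the central directions $\z(\m)$ and the fixed part $V^H$ yields the Hamiltonian $\widehat G$-variety $\widehat X=M_{\widehat G}(H\cap\widehat G,\eta_n,V/V^H)$. Coisotropy of $\widehat X$ — item (d) — is the heart of the matter: here one uses that $\xi$, hence the whole "horizontal" part of the moment image, was chosen generically, which is precisely the genericity needed so that the local model at a general closed-orbit point of the right type is coisotropic; this is the content of \cite{fibers}, Prop. 4.3 and is where I expect the main difficulty, since it requires controlling the generic behavior of the corank of the moment map fibers under the cross-section reduction. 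Finally (e), $H^\circ\subset\widehat G=(M,M)$, follows because $H=G_x$ with $\mu_{G,X}(x)_s\in\z(\m)$: the reductive part $H$ centralizes $\mu_{G,X}(x)_s$ up to its action, and a general genericity argument (or directly \cite{fibers}, Prop. 4.1) shows the identity component of $H$ lands in the semisimple part $(M,M)$, since any central torus of $M$ would act nontrivially on the $\z(\m)$-direction of the moment map near $x$. Assembling these verifications completes the proof.
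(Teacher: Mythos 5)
Your proposal takes essentially the same route as the paper: the paper offers no independent argument for this proposition but derives it directly from Propositions 4.1 and 4.3 of \cite{fibers}, which is exactly the reduction you identify (together with the cross-section facts from \cite{comb_ham} that the paper has already quoted). Your additional connective remarks about closedness of $Gx$, the slice theorem, and $H^\circ\subset(M,M)$ are consistent with how those cited results are meant to be applied, so the approach matches.
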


It is easy to see that $m_{\widehat{G}}(X_M)=\dim \widehat{G}$. The
slice theorem (Proposition \ref{Prop:4.3.3}) implies
$m_{\widehat{G}}(\widehat{X})=\dim\widehat{G}$. Thus the condition
that $\widehat{X}$ is coisotropic means $\dim\widehat{X}=\dim
\widehat{G}+\rank\widehat{G}$.

\begin{Prop}\label{Prop:4.6.3}
Let $X,T,X_T,M,X_M,\widehat{G}$ be such as in
Proposition~\ref{Prop:4.6.5}, $\widehat{T}:=T\cap \widehat{G}$.
Suppose $x\in X$ satisfies conditions (a)-(e) of Proposition
\ref{Prop:4.6.5}. Let $\widehat{X}$ denote the model variety
constructed from $x$ in Proposition \ref{Prop:4.6.5}. Then there is
a $\widehat{T}$-section $\widehat{X}_{\widehat{T}}$ of $\widehat{X}$
satisfying the following conditions.
\begin{enumerate}
\item There is the inclusion
\begin{equation}\label{eq:4.6:9}
W_{\widehat{G},\widehat{X}}^{(\widehat{X}_{\widehat{T}})}\subset
(W_{G,X}^{(X_T)})\cap M/T.\end{equation} If $X$ is conical and
untwisted, then $\widehat{X}$ is also untwisted, and
(\ref{eq:4.6:9}) turns into an equality.
\item Let $p$ denote the orthogonal projection $\t\rightarrow
\widehat{\t}$. Then \begin{equation}\label{eq:4.6:10}
p(\X_{G,X}^{(X_T)})=
\X_{\widehat{G},\widehat{X}}^{(\widehat{X}_{\widehat{T}})}.
\end{equation}
\item There is the inclusion \begin{equation}\label{eq:4.6:11}
\Lambda_{\widehat{G},\widehat{X}}^{(\widehat{X}_{\widehat{T}})}\subset
\Lambda_{G,X}^{(X_T)}\cap \widehat{\t}.\end{equation}
\end{enumerate}
\end{Prop}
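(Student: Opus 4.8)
The plan is to exploit the analytic equivalence $(X,x)\sim (M_G(H,\eta,V),x')$ from the symplectic slice theorem (Proposition~\ref{Prop:4.3.3}), together with the fact that $\widehat{X}=M_{\widehat{G}}(H\cap\widehat{G},\eta_n,V/V^H)$ is obtained from $M_M(H,\eta,V)$ by the two standard operations of passing to the derived group $\widehat{G}=(M,M)$ and dividing out the fixed part $V^H$ of the symplectic slice module. Since all the invariants in question ($W$, $\X^{(\cdot)}$, $\Lambda^{(\cdot)}$) are defined via $T$-cross-sections $\mu^{-1}(\widehat{\t}^{pr})$ and central automorphisms, and since analytic equivalence preserves exactly this data locally near a point with closed orbit, I would first reduce the whole statement to a statement comparing $M_M(H,\eta,V)$ (as an $M$-variety) with $\widehat{X}$ (as a $\widehat{G}$-variety). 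The orthogonal projection $p:\t\to\widehat{\t}$ is precisely the projection dual to the inclusion $\widehat{\t}=\t\cap(M,M)\hookrightarrow\t$, and $\z(\m)$ is the kernel of $p$; this is the geometric shadow of the decomposition $\m=\z(\m)\oplus[\m,\m]$.

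The key steps, in order. First I would choose the $\widehat{T}$-section $\widehat{X}_{\widehat{T}}$ to be the one ``compatible'' with $X_M\supset X_T$ near $x$: concretely, the connected component of $\mu_{\widehat{G},\widehat{X}}^{-1}(\widehat{\t}^{pr})$ that, under the analytic equivalence, sits inside $X_T$ (this makes sense because $\mu_{M,X_M}(x)_s\in\z(\m)\cap\m^{pr}$, so near $x$ the $T$-section of $X_M$ and the $\widehat T$-section of $\widehat X$ see the ``same'' regular semisimple locus shifted by the central element $\mu_{G,X}(x)_s$). Second, for (1): the inclusion $W_{\widehat{G},\widehat{X}}^{(\widehat{X}_{\widehat T})}\subset (W_{G,X}^{(X_T)}\cap M)/T$ follows because $N_{\widehat{G}}(\widehat T,\widehat X_{\widehat T})$ acts on the nearby $T$-section of $X$ by elements normalizing $X_T$ and lying in $M$; this is essentially \cite{fibers}, Propositions~4.1,4.3, invoked as in Proposition~\ref{Prop:4.6.5}. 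For the equality under the conical/untwisted hypothesis: by Theorem~\ref{Thm:1.22}-style reasoning one knows $W_{G,X}^{(X_T)}$ is generated by reflections and $\widehat\psi_{G,X}$ is smooth in codimension~1; the first part of (1) of the present Proposition asserts $\widehat X$ is then untwisted, so its Weyl group is also a reflection group, and a reflection-group argument (a reflection in $W_{G,X}^{(X_T)}$ fixing $\widehat\t$ pointwise would have to fix a hyperplane containing $\widehat\t$, forcing it to come from $\widehat X$) upgrades the inclusion to an equality. Third, for (2): this is the comparison of inefficiency kernels. The inefficiency kernel $\widehat T_0$ of $\widehat T:\widehat X_{\widehat T}$ equals $\widehat T\cap T_0$ composed with the kernel of $\widehat T\to T/T_0$; dualizing, the annihilator of $\widehat T_0$ in $\X(\widehat T)$ is exactly $p$ applied to the annihilator of $T_0$ in $\X(T)$. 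The passage from $V$ to $V/V^H$ does not change the inefficiency kernel of the slice action because $V^H$ is acted on trivially by $H$ (and the extra directions only enlarge the stabilizer, not the ineffective torus); passage to $(M,M)$ corresponds precisely to applying $p$. Fourth, for (3): a central Hamiltonian automorphism of $X$ near $x$ restricts to a central automorphism of the slice $\widehat X$ (it commutes with $\widehat G\subset M\subset G$, preserves the moment map hence $\widehat X_{\widehat T}$, and acts there by a translation in $\widehat T/\widehat T_0$), giving a homomorphism $\A_{G,X}^{(X_T)}\to\A_{\widehat G,\widehat X}^{(\widehat X_{\widehat T})}$ compatible with $p:T/T_0\to\widehat T/\widehat T_0$; dualizing the resulting map of abelian groups and chasing annihilators gives $\Lambda_{\widehat G,\widehat X}^{(\widehat X_{\widehat T})}\subset\Lambda_{G,X}^{(X_T)}\cap\widehat\t$, the intersection with $\widehat\t$ being automatic since these lattices live in $\widehat\t^*\cong\widehat\t$ and $\X(\widehat T)$-characters extend to $\X(T)$ via $p$.

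I expect the main obstacle to be the upgrade to equality in (1) under the untwisted hypothesis, together with making the ``compatible choice'' of $\widehat X_{\widehat T}$ genuinely canonical rather than merely locally defined near $x$: one must check that the component of $\mu_{\widehat G,\widehat X}^{-1}(\widehat\t^{pr})$ singled out this way is independent of the auxiliary choices in the slice theorem, and that the $N_{\widehat G}(\widehat T)$-action on its components matches what one sees inside $X$. The cleanest route is to phrase everything in terms of the étale morphism $G*_{N_G(T)}X_T\to X^{pr}$ (claim (4) of the Knop structure results quoted in Subsection~\ref{SUBSECTION_Ham_root}) and its $\widehat G$-analogue, so that the comparison of Weyl groups, weight lattices and root lattices all reduce to comparing these two étale covers over the point $\mu_{G,X}(x)_s+\widehat\t^{pr}$; the conical structure then guarantees one can degenerate to the base point and invoke the codimension-1 smoothness of $\widehat\psi$ to rule out ``extra'' reflections. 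This is exactly the pattern of \cite{fibers}, Propositions~4.1 and 4.3, which I would cite for the technical core and then assemble into the three displayed inclusions.
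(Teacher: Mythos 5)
Your overall strategy coincides with the paper's: both use the symplectic slice theorem to identify a connected saturated neighborhood $O$ of $x$ in $X_M$ with a saturated neighborhood of the base point of $\widehat{X}':=\widehat{X}\times V^H$, choose $\widehat{X}'_{\widehat{T}}$ to be the $\widehat{T}$-cross-section containing a component of $O\cap X_T$, quote \cite{fibers} for the Weyl-group inclusion and the untwistedness statement, compare inefficiency kernels for assertion (2) (passing through $\X_{M,X_M}^{(X_T)}=\X_{G,X}^{(X_T)}$ and the tautological compatibility of inefficiency kernels with $\widehat{G}=(M,M)$), and transport central automorphisms for assertion (3). Assertions (1) and (2) are handled adequately, modulo the fact that the paper simply cites \cite{fibers}, Propositions 4.6 and 5.3, where you sketch a reflection-group argument that is not quite right as stated (an element of $W_{G,X}^{(X_T)}\cap M/T$ fixes $\z(\m)$, not $\widehat{\t}$, pointwise), but this is an externally cited ingredient in the paper as well.

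The genuine gap is in assertion (3). You write that a central automorphism $\varphi$ of $X$ ``restricts to a central automorphism of the slice $\widehat{X}$,'' but $\widehat{X}$ is not a subvariety of $X$: the slice theorem only gives you $\varphi$ as an automorphism of the \emph{analytic} neighborhood $O\subset\widehat{X}'$, whereas $\A_{\widehat{G},\widehat{X}'}^{(\cdot)}$ consists of global algebraic automorphisms of $\widehat{X}'$. The construction of the homomorphism $\A_{G,X}^{(X_T)}\to\A_{\widehat{G},\widehat{X}'}^{(\widehat{X}'_{\widehat{T}})}$ therefore requires an extension step, which is the actual content of the paper's proof of (3): one first uses the already-established inclusion (\ref{eq:4.6:9}) together with \cite{comb_ham}, Lemma 5.6, to produce $\overline{\varphi}\in\A_{\widehat{G},\widehat{X}'^{pr}}^{(\cdot)}$ agreeing with $\varphi$ on $O\cap\widehat{X}'^{pr}$; one then observes that the rational map $\overline{\varphi}:\widehat{X}'\dashrightarrow\widehat{X}'$ is defined on every divisor meeting $O$, and that all components of $\widehat{X}'\setminus\widehat{X}'^{pr}$ are $\C^\times$-stable for the conical structure on the model variety (Example \ref{Ex:2.2.4}), hence meet the saturated neighborhood $O$ of the base point; so $\overline{\varphi}$ is defined in codimension $1$ and is in fact a morphism. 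Your proposal invokes conicality only for the Weyl-group equality in (1) and nowhere addresses this local-to-global passage, without which the dualization argument yielding $\Lambda_{\widehat{G},\widehat{X}}^{(\widehat{X}_{\widehat{T}})}\subset\Lambda_{G,X}^{(X_T)}\cap\widehat{\t}$ does not get off the ground.
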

\begin{proof}
 $\widehat{X}_{\widehat{T}}$ was constructed the proof of
\cite{fibers}, Proposition 4.6. Recall the construction briefly.

In the notation of Proposition \ref{Prop:4.6.5} set
$\widehat{X}':=\widehat{X}\times V^H$. By Proposition
\ref{Prop:4.3.3}, there is a connected saturated neighborhood $O$ of
$x$ in $X_M$ that is equivariantly symplectomorphic  to an open
saturated neighborhood of the base point $x'$ in $\widehat{X}'$.
Denote the last neighborhood also by $O$. Moreover, we may assume
that the intersection of $O$ with any $\widehat{T}$-cross-section of
$\widehat{X}'$ is connected.  Let $\widehat{X}'_{\widehat{T}}$ be a
$\widehat{T}$-cross-section of $\widehat{X}'$ containing a connected
component of $O\cap X_T$. By Proposition 4.6 from \cite{fibers}, we
get (\ref{eq:4.6:9}). The remaining part of assertion 1 was proved
in \cite{fibers}, Proposition 5.3.

By \cite{comb_ham}, Lemma 6.10,
$\X_{M,X_M}^{(X_T)}=\X_{G,X}^{(X_T)}, \Lambda_{M,X_M}^{(X_T)}\subset
\Lambda_{G,X}^{(X_T)}$. Tautologically, the intersection of the
inefficiency kernel for the action $T:X_T$ with $\widehat{G}$
coincides with the inefficiency kernel for the action $T\cap
\widehat{G}:X_T$. So it follows directly from the definition of
$\X^{(\bullet)}_{\bullet,\bullet}$ that
$\X_{\widehat{G},X_M}^{(X_T)}=p(\X_{M,X_M}^{(X_T)})$. Further, by
\cite{comb_ham}, Lemma 6.14,
$\Lambda_{\widehat{G},X_M}^{(X_T)}=\Lambda_{M,X_M}^{(X_T)}$. Since
$O$ admits open embeddings into both $\widehat{X}',X$, we get
$\X_{\widehat{G},\widehat{X}'}^{(\widehat{X}'_{\widehat{T}})}=\X_{\widehat{G},X_M}^{(X_T)}$.
This proves assertion 2. To prove the third assertion we may (and
will) assume that $X_M=X, \widehat{G}=G$. In this case we need to
check that $\Lambda_{G,\widehat{X}'}^{(\widehat{X}'_T)}\subset
\Lambda_{G,X}^{(X_T)}$ or, equivalently,
$\A_{G,\widehat{X}'}^{(\widehat{X}'_T)}\supset \A_{G,X}^{(X_T)}$.

Choose $\varphi\in \A_{G,X}^{(\cdot)}$. Any component component of
$O\cap X_T$ is $T$-stable whence $\varphi$-stable. Therefore  $O$
and   $\widehat{X}'_{T}\cap O$ are $\varphi$-stable. It remains to
check that there is an (automatically unique) element
$\overline{\varphi}\in \A_{G,\widehat{X}'}^{(\cdot)}$ such that
$\overline{\varphi}|_{O}=\varphi|_{O}$. (\ref{eq:4.6:9}) and Lemma
5.6 from \cite{comb_ham} yield
$\A_{G,X^{pr}}^{(X_T)}\subset\A_{G,\widehat{X}'^{pr}}^{(\widehat{X}'_T)}$,
so one can find  an element $\overline{\varphi}\in
\A_{\widehat{G},\widehat{X}'^{pr}}^{(\cdot)}$ with
$\overline{\varphi}|_{O\cap \widehat{X}'_{T}}=\varphi|_{O\cap
\widehat{X}'_T}$. Therefore $\overline{\varphi}|_{O\cap
\widehat{X}'^{pr}}=\varphi|_{O\cap \widehat{X}'^{pr}}$. So the
rational mapping $\overline{\varphi}:\widehat{X}'\dashrightarrow
\widehat{X}'$ is defined in all divisors intersecting $O$. But all
components of $\widehat{X}'\setminus \widehat{X}'^{pr}$ are
$\C^\times$-stable. Therefore any of them intersects $O$. It follows
that $\overline{\varphi}$ is a morphism. Applying \cite{comb_ham},
Lemma 5.6, we complete the proof.
\end{proof}

\subsection{Some properties of root lattices of affine Hamiltonian varieties}\label{SUBSECTION_root1}
In this subsection $X$ is an untwisted conical  affine Hamiltonian
$G$-variety such that $m_G(X)=\dim G$.

We assume, in addition, that $X$ is locally orthogonalizable in the
sense of the following definition.

\begin{defi}\label{Def:7.1.1}
A smooth affine  $G$-variety $Y$ is called {\it locally
orthogonalizable} (shortly, l.o.), if for any $y\in Y$ with closed
$G$-orbit the $G_y$-module $T_yY$ is orthogonal.
\end{defi}

\begin{Rem}\label{Rem:7.1.2}
The $G_y$-module $T_yY$ is  orthogonal iff the slice module
$T_yY/\g_*y$ is. This stems easily from the fact that the
$G_y$-module $\g_*y\cong (\g_y)^\perp$ is an orthogonal submodule of
$\g$.
\end{Rem}

The following simple lemma provides some examples of l.o.
$G$-varieties.

\begin{Lem}\label{Lem:7.1.3}
\begin{enumerate}
\item Let $H$ be a reductive subgroup of $G$ and $V$ be an
$H$-module. Then the  $G$-variety $G*_HV$ is l.o. iff the $H$-module
$V$ is orthogonal.
\item Let $Y$ be a l.o.  $G$-variety and $y\in Y$
a point with closed $G$-orbit. Then $G*_{G_y}(T_yY/\g_*y)$ is l.o.
\item Let $X_0$ be a smooth affine $G$-variety. Then $T^*X_0$ is a
l.o.  $G$-variety.
\item If $X$ is a l.o. $G$-variety and
$G^0$ is a  subgroup of $G$ containing $(G,G)$, then $X$ is l.o. as
a $G^0$-variety.
\end{enumerate}
\end{Lem}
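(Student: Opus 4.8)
The plan is to verify each of the four assertions of Lemma \ref{Lem:7.1.3} separately, reducing in each case to the behavior of the slice representation and to Remark \ref{Rem:7.1.2}. For assertion (1), I would argue as follows. The closed $G$-orbits in $X=G*_HV$ are exactly the orbits $G[g,v]$ where $Hv$ is closed in $V$. By $G$-equivariance it suffices to consider points $x=[1,v]$ with $Hv$ closed. The stabilizer $G_x$ is $H_v$, and the slice module $T_xX/\g_* x$ is naturally isomorphic, as an $H_v$-module, to $T_vV/\h_* v = V/\h_* v$ (the normal space to $Hv$ in $V$ at $v$). By Remark \ref{Rem:7.1.2}, $X$ is l.o. at $x$ iff this slice module is orthogonal. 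Now if $V$ is an orthogonal $H$-module, then $V$ is orthogonal as an $H_v$-module, and $\h_* v\cong(\h_v)^\perp\subset\h$ is an orthogonal $H_v$-submodule of $V$ (more precisely its image is an orthogonal submodule of $V$); hence the quotient $V/\h_* v$ is orthogonal, so $X$ is l.o. Conversely, taking $v=0$ (which has closed, indeed trivial, $H$-orbit) gives $G_x=H$ and slice module $V$ itself, so if $X$ is l.o.\ then $V$ must be an orthogonal $H$-module. This proves (1).

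For assertion (2), set $Y':=G*_{G_y}(T_yY/\g_* y)$. By (1), $Y'$ is l.o. iff the $G_y$-module $T_yY/\g_* y$ is orthogonal; but by Remark \ref{Rem:7.1.2} the latter holds iff the $G_y$-module $T_yY$ is orthogonal, which is precisely the hypothesis that $Y$ is l.o.\ at $y$. Hence (2) is immediate from (1) and Remark \ref{Rem:7.1.2}. For assertion (3), the key point is that for the cotangent bundle $X_0:=T^*X_0$ (sorry, $X:=T^*X_0$) and a point $x=(y,\alpha)$ with closed $G$-orbit, the $G_x$-module $T_xX$ is, up to an orthogonal summand coming from the fibre-base pairing, self-dual. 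Concretely, for $x$ in the zero section ($\alpha=0$) one has $G_x=G_y$ and $T_xX=T_yX_0\oplus T^*_yX_0$, which is visibly an orthogonal (indeed hyperbolic-type, but self-dual) $G_y$-module, so the pairing between $T_yX_0$ and its dual gives an invariant symmetric bilinear form. For a general $x$ one must check closed orbits still land on the zero section up to the slice description — in fact one reduces via the slice theorem: the slice module at any $x$ with closed orbit is the $G_x$-module $N\oplus N^*$ where $N=T_yX_0/\g_* y$ is the slice of $X_0$ at the base point, and $N\oplus N^*$ is orthogonal. I would spell this out using the standard fact that $T^*X_0$ near such a point looks like $G*_{G_x}(N\oplus N^*)$ and apply (1). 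Finally, assertion (4) is essentially formal: if $x\in X$ has closed $G^0$-orbit with $(G,G)\subset G^0\subset G$, then since $G^0$ is normal of finite index-type (cofinite, central cokernel) $G^0x$ is closed iff $Gx$ is closed, so $x$ also has closed $G$-orbit; then $G^0_x=G_x\cap G^0$, and an orthogonal $G_x$-module structure on $T_xX$ restricts to an orthogonal $G^0_x$-module structure. Hence $X$ is l.o.\ as a $G^0$-variety.

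The main obstacle I anticipate is assertion (3): one must be careful that "closed $G$-orbit in $T^*X_0$" is handled correctly, since not every point of $T^*X_0$ with closed orbit lies in the zero section, and one needs the precise form of the slice module $N\oplus N^*$ (with $N$ the slice of $X_0$). The cleanest route is to invoke the symplectic slice description or, more elementarily, Luna's slice theorem for $X_0$ itself: near a point $y\in X_0$ with closed $G$-orbit, $X_0\cong G*_{G_y}S$ with $S$ a $G_y$-module, so $T^*X_0\cong G*_{G_y}(T^*S)\cong G*_{G_y}(S\oplus S^*)$ in a neighbourhood, and $S\oplus S^*$ is orthogonal as a $G_y$-module via the natural pairing; then assertions (1) and (2) finish the argument. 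The remaining verifications (1), (2), (4) are routine given Remark \ref{Rem:7.1.2} and the identification $\g_* y\cong(\g_y)^\perp$, so no serious difficulty is expected there.
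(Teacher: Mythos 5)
Parts (1) and (2) of your proposal are correct and essentially the paper's argument: you identify $T_{[1,v]}(G*_HV)\cong\g/\h\oplus V$ (equivalently the slice module $V/\h_*v$) and use Remark \ref{Rem:7.1.2} together with the fact that a direct summand of an orthogonal module with orthogonal complement is orthogonal; your derivation of (2) from (1) plus Remark \ref{Rem:7.1.2} is if anything more direct than the paper's appeal to the Luna slice theorem. The problems are in (3) and (4).

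For (3), the difficulty you flag is the real one, and your proposed fix does not resolve it. A point $x\in T^*X_0$ with closed $G$-orbit need not project to a point of $X_0$ with closed $G$-orbit: for $G=\C^\times$ scaling $X_0=\C$, the orbit of $(1,1)\in T^*X_0\cong\C\times\C$ is $\{(t,t^{-1})\}$, which is closed, while its image in $X_0$ is $\C^\times$, which is not. So you cannot invoke Luna's slice theorem for $X_0$ at $\pi(x)$, and your description of the slice module of $T^*X_0$ at a general such $x$ as $N\oplus N^*$ with $N$ the slice of $X_0$ is not correct (also, the identification $T^*(G*_{G_y}S)\cong G*_{G_y}(S\oplus S^*)$ is dimensionally wrong; the fiber must also contain a copy of $(\g/\g_y)^*$). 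The paper's argument sidesteps all of this: for \emph{any} $x$ with closed orbit, $G_x$ is reductive, so the $G_x$-equivariant exact sequence $0\to T^*_{\pi(x)}X_0\to T_x(T^*X_0)\to T_{\pi(x)}X_0\to 0$ splits, and $W\oplus W^*$ carries the tautological invariant symmetric form; no slice theorem is needed.

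For (4), the biconditional you assert, ``$G^0x$ is closed iff $Gx$ is closed,'' fails in exactly the direction you need. Since $(G,G)\subset G^0$, closedness of $Gx$ does imply closedness of $G^0x$ (this is the direction the paper records), but the converse is false: take $G$ a torus and $G^0=\{1\}\supset(G,G)$, where every $G^0$-orbit is closed but most $G$-orbits are not (note also that $G/G^0$ is a torus, not finite, so ``cofinite'' is inaccurate). Consequently you cannot pass from a point with closed $G^0$-orbit to the l.o.\ property of $X$ as a $G$-variety at that same point. The restriction step itself (an invariant symmetric form for $G_x$ restricts to one for $G^0_x=G_x\cap G^0$) is fine, but the argument needs an additional step relating the tangent module at a point with closed $G^0$-orbit to the closed $G$-orbit in its $G$-saturation.
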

\begin{proof}
In the notation of assertion 1, note that the $H_y=G_y$-modules
$T_y(G*_HV),\g/\h\oplus V$ are isomorphic, where $y=[1,v]\in G*_HV$.
Assertion 1 follows from Remark \ref{Rem:7.1.2}. The second
assertion follows from the Luna slice theorem. In assertion 3 note
that for any $y\in T^*X_0$ with closed $G$-orbit the  $G_y$-module
$T_y(T^*X_0)$ is isomorphic to $T_{\pi(y)}X_0\oplus
T_{\pi(y)}^*X_0$, where $\pi:T^*X_0\rightarrow X_0$ is the canonical
projection. To prove assertion  4 note that $G^0x$ is closed
provided $Gx$ is.
\end{proof}

We also need the notion of a $\g$-stratum introduced in
\cite{fibers}.

\begin{defi}
A pair $(\h,V)$, where $\h$ is a reductive subalgebra of $\g$ and
$V$ is an $\h$-module, is said to be a $\g$-stratum. Two $\g$-strata
$(\h_1,V_1)$, $(\h_2,V_2)$ are called  {\it equivalent} if there
exists $g\in G$ and a linear isomorphism
$\varphi:V_1/V_1^{\h_1}\rightarrow V_2/V_2^{\h_2}$ such that
$\Ad(g)\h_1=\h_2$ and $ (Ad(g)\xi)\varphi(v_1)=\varphi(\xi v_1)$ for
all $\xi\in\h_1, v_1\in V_1/V_1^{\h_1}$.
\end{defi}

\begin{defi}\label{Def:1.4.2} Let $Y$ be a smooth affine variety and
$y\in Y$ a point with closed $G$-orbit. The pair $(\g_y,
T_yY/\g_*y)$ is called the $\g$-stratum of $y$. We say that $(\h,V)$
is a $\g$-stratum of $Y$ if $(\h,V)$ is equivalent to a $\g$-stratum
of a point of $Y$. In this case we write $(\h,V)\rightsquigarrow_\g
Y$.
\end{defi}

For $\alpha\in \Delta(\g)$ define  $\g$-strata
$S^{(\alpha)},R^{(\alpha)}\index{ra@$R^{(\alpha)}$}$ as follows:
$S^{(\alpha)}=(\g^{(\alpha)}, \C^2\oplus
\C^2)R^{(\alpha)}=(\C\alpha^\vee,V)$, where $V$ is the
two-dimensional $\C\alpha^\vee$-module, where $\alpha^\vee$ acts
with weights $\pm 1$.

All results concerning the root lattice of $X$ are based on the
following proposition.

\begin{Prop}\label{Prop:7.1.4}
Let us fix a  $T$-section $X_T$ of $X$. Let $\alpha\in \Delta(\g)$
be such that $\alpha\not\in \Lambda_{G,X}^{(X_T)}$ but $s_\alpha\in
W_{G,X}^{(X_T)}$. Then $R^{(\alpha)}\rightsquigarrow_\g X,
2\alpha\in \Lambda_{G,X}^{(X_T)}$.
\end{Prop}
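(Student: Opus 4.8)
The plan is to analyze the local structure of $X$ near a suitably chosen point and extract the $\g$-stratum $R^{(\alpha)}$ together with the membership $2\alpha\in\Lambda_{G,X}^{(X_T)}$. First I would apply Proposition \ref{Prop:4.6.5} with $M$ the standard Levi subgroup of semisimple rank $1$ whose derived subalgebra is $\g^{(\alpha)}$, so that $\widehat G=(M,M)=G^{(\alpha)}$ has rank one. The hypothesis $s_\alpha\in W_{G,X}^{(X_T)}$ guarantees that the reflection lies in $W_{G,X}^{(X_T)}\cap M/T$, and since $X$ is conical and untwisted, Proposition \ref{Prop:4.6.3}(1) tells us the model variety $\widehat X=M_{\widehat G}(H\cap\widehat G,\eta_n,V/V^H)$ produced at the chosen point $x$ is itself untwisted with $W_{\widehat G,\widehat X}^{(\widehat X_{\widehat T})}=W_{G,X}^{(X_T)}\cap M/T\ni s_\alpha$; in particular $\widehat X$ has a nontrivial Weyl group, hence positive rank, hence $\rank_{\widehat G}(\widehat X)=\rank\widehat G=1$. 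By Proposition \ref{Prop:4.6.5}(d), $\widehat X$ is coisotropic, which for $m_{\widehat G}(\widehat X)=\dim\widehat G$ means $\dim\widehat X=\dim\widehat G+1$.

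Next I would classify, up to conjugacy, the coisotropic conical untwisted Hamiltonian $\widehat G$-varieties of the form $M_{\widehat G}(\widehat H,\eta_n,W)$ with $\widehat G$ simple of rank one and $\dim\widehat X=\dim\widehat G+1=4$. Here $X$ being l.o.\ forces (via Lemma \ref{Lem:7.1.3}, Remark \ref{Rem:7.1.2} and the fact that passing to the slice preserves orthogonality) the slice module $V/V^H$ to be orthogonal, and by Proposition \ref{Prop:4.6.5}(e) $H^\circ\subset\widehat G$. The possibilities for $\widehat H\subset\widehat G=\SL_2$ (or $\mathrm{PGL}_2$) are very limited: $\widehat H^\circ$ is trivial, a maximal torus, or all of $\widehat G$. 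A dimension count on $\widehat X=\widehat G*_{\widehat H}(U\oplus(V/V^H))$ together with coisotropy and orthogonality of the slice singles out the case $\widehat H^\circ=T\cap\widehat G=\widehat T$, $\eta_n=0$, and $V/V^H$ the orthogonal $\widehat T$-module of dimension $2$ on which the coroot $\alpha^\vee$ acts with weights $\pm1$ (the alternative $\widehat H=\widehat G$ would give the symplectic vector space or $T^*(\widehat G/\widehat G)$, neither of which is coisotropic of the right dimension with a reflection in its Weyl group; $\widehat H$ finite would make $\widehat X\cong T^*\widehat G$ up to a cover, which is coisotropic but has full rank $3\ne1$). Thus the $\g$-stratum of $x$ in $X_M$, equivalently in $X$ by Proposition \ref{Prop:4.6.5}(d) and \cite{fibers}, Lemma 2.27, is exactly $(\C\alpha^\vee, V)=R^{(\alpha)}$, giving $R^{(\alpha)}\rightsquigarrow_\g X$.

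Finally, for the lattice statement: once $\widehat H^\circ=\widehat T$, the $\widehat T$-section $\widehat X_{\widehat T}$ is essentially $\widehat T$ acting on a two-dimensional orthogonal module, so one computes directly that $\X_{\widehat G,\widehat X}^{(\widehat X_{\widehat T})}$ is generated by $\alpha$ while the central automorphism group $\A_{\widehat G,\widehat X}^{(\widehat X_{\widehat T})}$ is the order-two subgroup $\{\pm1\}\subset\widehat T/T_0$ (the $-1\in\SL_2$-type element acts as a central Hamiltonian automorphism of the slice, since the module is orthogonal), hence $\Lambda_{\widehat G,\widehat X}^{(\widehat X_{\widehat T})}=2\Z\alpha$. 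Combining with the inclusion $\Lambda_{\widehat G,\widehat X}^{(\widehat X_{\widehat T})}\subset\Lambda_{G,X}^{(X_T)}\cap\widehat\t$ from Proposition \ref{Prop:4.6.3}(3) yields $2\alpha\in\Lambda_{G,X}^{(X_T)}$, while $\alpha\notin\Lambda_{G,X}^{(X_T)}$ is the standing hypothesis. The main obstacle I anticipate is the classification step in the second paragraph: one must be careful that coisotropy, orthogonality (l.o.), and untwistedness together really pin down the rank-one model variety uniquely, ruling out the torus-free and full-group cases by honest dimension and Weyl-group bookkeeping rather than by appeal to intuition; the rest is then a short explicit computation on an $\SL_2$-slice.
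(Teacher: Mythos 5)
Your overall strategy is exactly the paper's: pass to the rank-one Levi $\m=\t+\g^{(\alpha)}$ via Proposition \ref{Prop:4.6.5}, transfer the data to the coisotropic model variety $\widehat{X}$ over $\widehat{G}=G^{(\alpha)}$ via Proposition \ref{Prop:4.6.3}, classify the possible $4$-dimensional $\widehat{X}$, and land on $T^*(\widehat{G}/F)$ with $F^\circ$ a maximal torus. The endgame ($\Lambda_{\widehat{G},\widehat{X}}^{(\cdot)}=2\Z\alpha$, then push back up by Proposition \ref{Prop:4.6.3}(3)) is also the paper's.

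There is, however, a genuine gap in the classification step, precisely where you anticipated trouble. You dispose of the case ``$\widehat{H}=H\cap\widehat{G}$ finite'' by identifying $\widehat{X}$ with $T^*\widehat{G}$ up to a cover, of rank $3$. That identification is only valid when $\eta_n=0$, and that subcase is already dead for a cruder reason ($\dim G*_{\widehat H}\g^*=6\neq 4$). The dangerous subcase is $\eta_n\neq 0$: a nonzero nilpotent in $\widehat{\g}^{\widehat H}\cong\sl_2$ forces $\widehat{\h}=0$, hence $\widehat H$ finite, $U=(\z_{\widehat\g}(\eta_n))^*$ one-dimensional and $V/V^H=\{0\}$ by the dimension count. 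The resulting $M_{\widehat G}(\widehat H,\eta_n,0)$ is a $4$-dimensional, conical, coisotropic, locally orthogonalizable Hamiltonian $\widehat G$-variety of rank $1$ whose Weyl group is all of $W(\widehat\g)\ni s_\alpha$ --- it passes every filter you apply (dimension, coisotropy, orthogonality of the slice module, presence of a reflection), so your argument does not exclude it, and if it occurred the $\g$-stratum would not be $R^{(\alpha)}$. The paper kills it with the one hypothesis you state but never deploy: $\alpha\notin\Lambda_{G,X}^{(X_T)}$, which by Proposition \ref{Prop:4.6.3}(3) forces $\alpha\notin\Lambda_{\widehat G,\widehat X}^{(\cdot)}$, whereas a direct check shows $\Lambda_{\widehat G,\widehat X}^{(\cdot)}=\Lambda(\widehat\g)\ni\alpha$ for this model variety. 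Once you insert that step (transfer $\alpha\notin\Lambda$ down to the slice and use it to exclude $\eta_n\neq 0$), the rest of your argument goes through and coincides with the paper's.
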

\begin{proof}
Set $\m=\t+\g^{(\alpha)}$.  This is a Levi subalgebra in $\g$.
Denote by $M$ the corresponding Levi subgroup of $G$. Applying
Proposition~\ref{Prop:4.6.5} to $M$, we find a point $x\in X$
satisfying conditions (a)-(e) of this proposition. Set
$\widehat{G}=G^{(\alpha)}$ and let $\widehat{X}$ be the coisotropic
model variety defined in condition (d) of
Proposition~\ref{Prop:4.6.5}. By Proposition~\ref{Prop:4.6.3},
 $\alpha\not\in \Lambda_{\widehat{G},
\widehat{X}}^{(\cdot)}$, $\widehat{X}$ is untwisted as a Hamiltonian
$\widehat{G}$-variety and $s_\alpha\in
W_{\widehat{G},\widehat{X}}^{(\cdot)}$. Besides, by
Lemma~\ref{Lem:7.1.3}, the $\widehat{G}$-variety $\widehat{X}$ is
l.o. Our claim will follow if we check that
$\widehat{X}=T^*(\widehat{G}/F)$, where $F^\circ$ is a maximal torus
in $\widehat{G}$. Indeed, from Theorem \ref{Thm:1.22} one can easily
deduce that $2\alpha\in \Lambda_{\widehat{G},\widehat{X}}^{(\cdot)}$
and, thanks to Proposition \ref{Prop:4.6.3}, $2\alpha\in
\Lambda_{G,X}^{(X_T)}$. Below we assume that $G=\widehat{G}\cong
\SL_2,X=\widehat{X}$.

Let $H,\eta,V$ be such that $X=M_G(H,\eta,V)$. Since $X$ is
coisotropic, we see that $\dim X=\dim G+\rank G=4$. If $\eta\neq
\{0\}$, then $H$ is discrete and $V=\{0\}$. One easily verifies that
in this case $\Lambda_{G,X}^{(\cdot)}=\Lambda(\g)$. Thus $\eta=0$
and either $H^\circ$ is a maximal torus of $G$ or $H=\widehat{G}$.
It remains to consider the case $H=G$. Here $\dim V=4$. Since $V$ is
both orthogonal and symplectic, we see that $V$ is the direct sum of
two copies of the tautological $\SL_2$-module. But in this case
$W_{G,X}^{(\cdot)}=\{1\}$.
\end{proof}

Our next task is to classify coisotropic model varieties with some
special properties.

\begin{Prop}\label{Prop:7.1.5}
Suppose $G\cong \SL_3$ and $X=M_G(H,\eta,V)$, where
$\cork_G(X)=0,\eta$ is nilpotent. Then the following conditions are
equivalent:
\begin{enumerate}
\item
$\Lambda_{G,X}^{(\cdot)}\neq \Lambda(\g)$. \item $\eta=0$ and the
pair $(\h,V)$ is indicated in Table~\ref{Tbl:7.1.6}.
\end{enumerate}
Under these equivalent conditions,  $\Lambda_{G,X}^{(\cdot)}$
depends  (up to $W(\g)$-conjugacy) only on the pair $(\h,V)$.
Generators of $\Lambda_{G,X}^{(\cdot)}$ are presented in the third
column of Table~\ref{Tbl:7.1.6}.
\end{Prop}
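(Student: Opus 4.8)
The plan is to treat this as a finite classification problem for $\SL_3$. Since $X = M_G(H,\eta,V)$ with $\cork_G(X)=0$, the variety $X$ is coisotropic, so $\dim X = \dim G + \rank G = 8 + 2 = 10$. The determining triple $(H,\eta,V)$ is constrained: $H$ is a reductive subgroup of $\SL_3$, $\eta \in \g^H$ is nilpotent, and $V$ is a symplectic $H$-module; moreover $\dim X = \dim(G/H) + \dim(\z_\g(\eta)/\h) + \dim V$, which together with the coisotropy constraint limits the possibilities. First I would enumerate the reductive subgroups $H \subseteq \SL_3$ up to conjugacy (maximal tori, $\SL_2$ in its various embeddings, $\GL_2$, $\SO_3 = \mathrm{PGL}_2$, the normalizer of a torus, finite subgroups, and $G$ itself), and for each determine which nilpotent $\eta \in \g^H$ and which symplectic $H$-modules $V$ yield $\cork_G(X) = 0$. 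The corank-zero condition is the main structural filter: it forces $X$ to be a coisotropic (equivalently, since $m_G(X) = \dim G$, a multiplicity-free) Hamiltonian variety, and by the slice theorem this is governed by $\widehat{G} = $ the derived group appearing in the local model, so one reduces to checking that the associated model variety $M_{\widehat G}(H\cap\widehat G, \eta_n, V/V^H)$ is coisotropic — a condition classified in \cite{fibers} and \cite{ranks}.

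For the equivalence $(1) \Leftrightarrow (2)$, the nontrivial direction is $(1) \Rightarrow (2)$. I would argue by contradiction or by direct inspection: if $\eta \neq 0$, then since $\eta$ is a nonzero nilpotent in $\sl_3$ commuting with all of $\h$, the centralizer $\z_\g(\eta)$ is small, which forces $H$ to be essentially contained in a parabolic and, as in the $\SL_2$ case of Proposition~\ref{Prop:7.1.4}, one checks case by case that $\Lambda_{G,X}^{(\cdot)} = \Lambda(\g)$. (The key computational input is that for a cotangent bundle $T^*(G/F)$ with $F^\circ$ a maximal torus, Theorem~\ref{Thm:1.22} gives $\Lambda_{G,X}^{(\cdot)} = \Lambda_{G,G/F} = \Lambda(\g)$ since $G/F$ is a generic flag-type space; more generally, the Knop-type results identify $\Lambda_{G,X}^{(\cdot)}$ with a root sublattice whose deficiency from $\Lambda(\g)$ is detected by the $\g$-strata $R^{(\alpha)}$ occurring in $X$, via Proposition~\ref{Prop:7.1.4}.) So $\eta = 0$, and $X = T^*(G/H) \times (\text{symplectic vector space from } V)$ up to the model-variety structure; then $\Lambda_{G,X}^{(\cdot)} \neq \Lambda(\g)$ exactly when the $\h$-module $V$ fails to "see" all the simple coroots, and running through the short list of $(\h,V)$ with $V$ symplectic, orthogonal (forced, for $T^*$-type pieces), and $\cork_G(X) = 0$ produces precisely the rows of Table~\ref{Tbl:7.1.6}. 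The converse $(2) \Rightarrow (1)$ is then a finite check: for each tabulated $(\h,V)$, exhibit a central automorphism of $X$ not coming from $Z(G)$ (equivalently, compute $\A_{G,X}^{(\cdot)}$ directly in the model variety using the formula for $\mu_{G,M_G(H,\eta,V)}$), and read off the annihilator lattice.

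For the final assertions — that $\Lambda_{G,X}^{(\cdot)}$ depends only on $(\h,V)$ up to $W(\g)$-conjugacy, and the explicit generators — I would note that $\Lambda_{G,X}^{(\cdot)}$ is by construction the annihilator of $\A_{G,X}^{(X_T)} \subseteq T/T_0$, and $\A_{G,X}^{(\cdot)}$ is intrinsic to the Hamiltonian structure (Proposition~\ref{Prop:4.6.3} and \cite{comb_ham}, Corollary 5.7), which in turn depends only on the determining triple; since $\eta = 0$ in all relevant cases, this is just $(\h,V)$. The generators in the third column would be verified by the same local computation: identifying $X_T$ inside the model variety and tracking which one-parameter subgroups of $T$ act trivially on $X_T$, one gets $T_0$ explicitly, hence $\Lambda_{G,X}^{(\cdot)} = \mathrm{Ann}(T_0)$ — and for $\SL_3$ this is a rank-$\le 2$ computation done by hand for each row. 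The main obstacle I anticipate is the enumeration being genuinely case-heavy: making sure the list of $(H,\eta,V)$ with $\cork_G(X) = 0$ is complete (this is where one leans hardest on the coisotropy classification of model varieties from \cite{fibers}), and then for the borderline subgroups like $\SO_3 \subset \SL_3$ and the torus-normalizer, correctly computing whether a nontrivial central automorphism exists. Everything after the enumeration is a routine, if lengthy, lattice calculation.
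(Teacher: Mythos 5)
Your plan takes a genuinely different route from the paper — brute-force enumeration of all corank-zero model varieties of $\SL_3$ followed by a lattice computation for each — and while that could in principle succeed, as written it has a real gap: you never use the hypothesis $\Lambda_{G,X}^{(\cdot)}\neq\Lambda(\g)$ to constrain $(\h,\eta,V)$ \emph{before} enumerating, and everything hard is deferred to the unexecuted enumeration. The paper's proof of $(1)\Rightarrow(2)$ hinges on the observation that $\Lambda_{G,X}^{(\cdot)}\neq\Lambda(\g)$ forces $S^{(\alpha)}\rightsquigarrow_\g X$ or $R^{(\alpha)}\rightsquigarrow_\g X$ for some root $\alpha$ (Proposition \ref{Prop:7.1.4} and its $S^{(\alpha)}$-counterpart from \cite{Weyl}), hence $\h$ contains an element $G$-conjugate to $\alpha^\vee$. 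For $\sl_3$ one has $\z_\g(\alpha^\vee)=\t$, so $\g^H$ contains no nonzero nilpotent and $\eta=0$ immediately — no case analysis of parabolics needed. Your alternative ("if $\eta\neq 0$ the centralizer is small, which forces $H$ essentially into a parabolic, and one checks case by case that $\Lambda=\Lambda(\g)$") is not an argument: it requires computing the root lattice of every corank-zero model variety with $\eta\neq 0$, and you give no mechanism for doing so. Likewise, with $\eta=0$ and $V$ orthogonal (orthogonality itself needs a reason — it comes from the local orthogonalizability standing in this subsection, which you do not invoke), the corank-zero condition collapses to the single diophantine constraint $\dim\h-\dim V_0=3$, which is what makes the table finite and short; your "$\dim X=10$" observation is the raw input but you never extract this equation.

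Two further concrete problems. First, the structural identification is wrong: $M_G(H,0,V)$ with $V\cong V_0\oplus V_0^*$ is $T^*(G*_HV_0)$, an associated bundle, not $T^*(G/H)\times(\text{symplectic vector space})$. This matters because the lattice values in Table \ref{Tbl:7.1.6} (especially row 2, where $V\neq 0$) are obtained by applying the comparison theorem (Theorem \ref{Thm:1.22}) to $X_0=G*_HV_0$ and then reducing to a generic orbit via Proposition \ref{Prop:1.4.10}, Kr\"amer's tables and Proposition \ref{Prop:3.7.1}; the product description would give the wrong answer. Second, for $(2)\Rightarrow(1)$ and the explicit generators, "exhibit a central automorphism not coming from $Z(G)$" is the right idea but the paper does not construct automorphisms by hand in the model variety: it converts the question into $\Lambda(\g,\h')=\X_{G,G/\widetilde{H'}}$ for a suitable subgroup via Proposition \ref{Prop:3.7.1} and reads the answer off Kr\"amer's classification. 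Your "rank-$\le 2$ computation done by hand for each row" would need the identification $X\cong T^*(G*_HV_0)$ and the comparison theorem anyway, so the detour through $T_0$ and $X_T$ inside the model variety does not buy anything and is not actually carried out.
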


In the second row of Table \ref{Tbl:7.1.6} $\C_{\pm \chi}$ denotes
the one-dimensional $\h$-module corresponding to a nonzero character
$\chi$ of $\h$.

\begin{longtable}{|c|c|c|}\caption{Coisotropic model varieties with small root lattices
for $G=\SL_3$}\label{Tbl:7.1.6}
\\\hline N&$(\h,V)$& $\Lambda_{G,X}^{(\cdot)}$\\\endfirsthead\hline
N&$(\g,\h,V)$&$\Lambda_{G,X}^{(\cdot)}$\\\endhead\hline 1&
$(\sl_2,0)$&$\varepsilon_1-\varepsilon_3$\\\hline 2& $(\sl_2\times
\C,\C_\chi\oplus \C_{-\chi})$&$\varepsilon_1-\varepsilon_3$
\\\hline 3&$(\so_3,0)$&$2\alpha_1,2\alpha_2$\\\hline
\end{longtable}
\begin{proof}
Suppose $\Lambda_{G,X}^{(\cdot)}\neq \Lambda(\g)$. Since
$S^{(\alpha)}\rightsquigarrow_\g X$ or
$R^{(\alpha)}\rightsquigarrow_\g X$, we see that there is $x\in\h,
x\sim_G \alpha^\vee$, where $\alpha\in\Delta(\g)$. It follows that
$\g^H$ does not contain a nilpotent element whence $\eta=0$ and $V$
is an orthogonal $H$-module. Therefore there is a $H$-module $V_0$
such that $V\cong V_0\oplus V_0^*$. As we have seen in \cite{slice},
it follows that the Hamiltonian $G$-varieties $X$ and $T^*X_0$,
where $X_0=G*_{H}V$, are isomorphic. In particular,
$\Lambda_{G,X}^{(\cdot)}\sim_{W(\g)}\Lambda_{G,X_0}$, so the former
lattice depends only on $(\h,V)$. Since $\cork_G(X)=0$, we get
$2(\dim \g-\dim \h)+2\dim V_0=\rank\g+\dim \g$, equivalently,
\begin{equation}\label{eq:7.1:1}\dim\h-\dim V_0=3.\end{equation}
At first, consider the case  $S^{(\alpha)}\rightsquigarrow_\g X$.
Proposition 5.2.1 from \cite{Weyl}  implies
$[\h,\h]\sim_G\g^{(\alpha)},V=V^{[\h,\h]}$. From (\ref{eq:7.1:1}) it
follows that  $(\h,V)$ is one of pairs 1,2 of Table~\ref{Tbl:7.1.6}.
Let us check that $\Lambda_{G,X}^{(\cdot)}$ coincides with the
lattice indicated in Table~\ref{Tbl:7.1.6}. By
Proposition~\ref{Prop:1.4.10}, if $V_0$ is a nontrivial
1-dimensional $\h$-module, then
$\Lambda_{G,G*_HV_0}=\Lambda(\g,[\h,\h])$. To determine the last
lattice we use \cite{Kramer}, Tabelle 1, and Proposition
\ref{Prop:3.7.1}.

Now consider the case $S^{(\alpha)}\not\rightsquigarrow_\g X$. In
this case  $W_{G,X}^{(\cdot)}=W(\g)$ and
$R^{(\alpha)}\rightsquigarrow_\g X$. Since $\g$ has no spherical
modules of rank 2, see \cite{Leahy}, we get $\h\neq \g$. It follows
that $\dim\h\leqslant 4$ and, in the case of equality, $\dim V_0=1$,
so we get pair N2. So $\dim\h=3,V_0=\{0\}$ whence $\h=\so_3$. By
\cite{Kramer}, Tabelle 1,
$\X_{\operatorname{PSL}_3,\operatorname{PO}_3}=2\Lambda(\g)$.
Applying Proposition~\ref{Prop:3.7.1}, we get
$\Lambda(\g,\h)=2\Lambda(\g)$.
\end{proof}

\begin{Cor}\label{Cor:7.1.7}
Let $\g$ be a simple Lie algebra with  $\rank\g> 2$. Denote by $X_T$
a $T$-cross-section of $X$. Let $\alpha,\alpha_1\in \Delta(\g)$ be
such that $s_{\alpha}\in W_{G,X}^{(X_T)}, s_{\alpha_1}\not\in
W_{G,X}^{(X_T)}$ and $\g^{(\alpha,\alpha_1)}\cong \sl_3$. Then
$\alpha\in \Lambda_{G,X}^{(X_T)}$.
\end{Cor}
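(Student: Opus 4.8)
The plan is to reduce to the rank-two situation inside the $\sl_3$-subsystem generated by $\alpha$ and $\alpha_1$, and there invoke the classification of coisotropic model varieties with small root lattices from Proposition~\ref{Prop:7.1.5}. First I would set $\m=\t+\g^{(\alpha,\alpha_1)}$, which is a Levi subalgebra of $\g$ since it is generated by $\t$ together with the root subspaces for roots in a closed subsystem; let $M$ be the corresponding Levi subgroup, so $(M,M)\cong \SL_3$ (or a quotient thereof). Apply Proposition~\ref{Prop:4.6.5} with this $M$ to obtain a point $x\in X$ satisfying conditions (a)--(e), and put $\widehat{G}:=(M,M)$ with $\widehat{X}:=M_{\widehat{G}}(H\cap\widehat{G},\eta_n,V/V^H)$ the coisotropic model variety of condition (d). By Proposition~\ref{Prop:4.6.3}, $\widehat{X}$ is untwisted, $W_{\widehat{G},\widehat{X}}^{(\cdot)}=(W_{G,X}^{(X_T)})\cap M/T$ (using that $X$ is conical and untwisted), and the orthogonal projection $p:\t\to\widehat{\t}$ sends $\X_{G,X}^{(X_T)}$ onto $\X_{\widehat{G},\widehat{X}}^{(\cdot)}$ and satisfies $\Lambda_{\widehat{G},\widehat{X}}^{(\cdot)}\subset \Lambda_{G,X}^{(X_T)}\cap\widehat{\t}$.

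Next I would transfer the hypotheses to $\widehat{X}$. Since $\alpha,\alpha_1\in\Delta(\widehat{\g})$ and $p$ fixes $\widehat{\t}$ pointwise, the conditions $s_\alpha\in W_{G,X}^{(X_T)}$, $s_{\alpha_1}\notin W_{G,X}^{(X_T)}$ translate, via the equality of Weyl groups in Proposition~\ref{Prop:4.6.3}(1), into $s_\alpha\in W_{\widehat{G},\widehat{X}}^{(\cdot)}$ and $s_{\alpha_1}\notin W_{\widehat{G},\widehat{X}}^{(\cdot)}$. In particular $W_{\widehat{G},\widehat{X}}^{(\cdot)}\neq W(\widehat{\g})$, so by Proposition~\ref{Prop:1.4.8} (applied to $\widehat{X}$, or directly by Proposition~\ref{Prop:7.1.5}) we have $\Lambda_{\widehat{G},\widehat{X}}^{(\cdot)}\neq \Lambda(\widehat{\g})$. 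Also $\widehat{X}$ is l.o.\ as a $\widehat{G}$-variety by Lemma~\ref{Lem:7.1.3} (it is a model variety attached to an orthogonal module, or one can argue as in the proof of Proposition~\ref{Prop:7.1.4} that $\eta_n=0$ forces $\widehat{X}\cong T^*(\widehat{G}*_{H\cap\widehat{G}}V_0)$). Thus $\widehat{X}$ falls under Proposition~\ref{Prop:7.1.5} with $G=\SL_3$, $\cork_{\widehat{G}}(\widehat{X})=0$, and $\Lambda_{\widehat{G},\widehat{X}}^{(\cdot)}\neq\Lambda(\widehat{\g})$.

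Now I would simply read off Table~\ref{Tbl:7.1.6}. In cases N1 and N2 the root lattice is $\Z(\varepsilon_1-\varepsilon_3)$ and the associated Weyl group is generated by the single reflection $s_{\varepsilon_1-\varepsilon_3}$ — that is, $W_{\widehat{G},\widehat{X}}^{(\cdot)}$ contains exactly one reflection, namely the one in the highest root $\theta=\varepsilon_1-\varepsilon_3$ (the unique root, up to sign, lying in $\Lambda_{\widehat{G},\widehat{X}}^{(\cdot)}$). In case N3 the root lattice is $\Z\{2\alpha_1,2\alpha_2\}$, whose minimal root system (Definition~\ref{Def:1.4.6}) is of type $A_2$ with roots the primitive vectors $\pm 2\alpha_1,\pm 2\alpha_2,\pm 2(\alpha_1+\alpha_2)$; here $W_{\widehat{G},\widehat{X}}^{(\cdot)}=W(\widehat{\g})$, contradicting $s_{\alpha_1}\notin W_{\widehat{G},\widehat{X}}^{(\cdot)}$. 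Hence only cases N1, N2 can occur, and in both of them the unique reflection in $W_{\widehat{G},\widehat{X}}^{(\cdot)}$ is $s_\theta$ with $\theta\in\Lambda_{\widehat{G},\widehat{X}}^{(\cdot)}$. Since $s_\alpha\in W_{\widehat{G},\widehat{X}}^{(\cdot)}$ and $s_\alpha$ is a reflection, we conclude $\alpha=\pm\theta$, so $\alpha\in\Lambda_{\widehat{G},\widehat{X}}^{(\cdot)}\subset\Lambda_{G,X}^{(X_T)}$, which is the claim.

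The main obstacle I anticipate is the bookkeeping around the passage from $X$ to $\widehat{X}$: one must be careful that the model variety produced by Proposition~\ref{Prop:4.6.5} genuinely has corank $0$ and nilpotent $\eta$ (so that Proposition~\ref{Prop:7.1.5} applies verbatim), and that the identifications of Weyl groups and lattices under $p$ are the ones recorded in Proposition~\ref{Prop:4.6.3}; the condition $\rank\g>2$ is used precisely to guarantee that $\g^{(\alpha,\alpha_1)}$ is a \emph{proper} Levi of type $A_2$ so that Proposition~\ref{Prop:4.6.5} is non-vacuous. Everything after the reduction is a finite inspection of Table~\ref{Tbl:7.1.6}.
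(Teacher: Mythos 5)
Your proof follows essentially the same route as the paper: pass to the Levi subalgebra $\m=\t+\g^{(\alpha,\alpha_1)}$ with $(M,M)\cong\SL_3$ via Propositions \ref{Prop:4.6.5} and \ref{Prop:4.6.3}, and then read off Proposition \ref{Prop:7.1.5} and Table \ref{Tbl:7.1.6}. The one shaky step --- deducing $\Lambda_{\widehat{G},\widehat{X}}^{(\cdot)}\neq\Lambda(\widehat{\g})$ from $W_{\widehat{G},\widehat{X}}^{(\cdot)}\neq W(\widehat{\g})$ via Proposition \ref{Prop:1.4.8}, which is stated for $G$-varieties rather than for the Hamiltonian lattices $\Lambda^{(\cdot)}_{\bullet,\bullet}$ --- is harmless, because in the excluded case $\Lambda_{\widehat{G},\widehat{X}}^{(\cdot)}=\Lambda(\widehat{\g})$ one gets $\alpha\in\Lambda(\widehat{\g})\subset\Lambda_{G,X}^{(X_T)}$ directly from (\ref{eq:4.6:11}), so the conclusion holds in every case.
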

\begin{proof}
Set $\m:=\t+\g^{(\alpha,\alpha_1)}$. Since $\g\not\cong G_2$,  we
see that $\m$ is a Levi subalgebra in $\g$. Let $M$ denote the
corresponding Levi subgroup. Apply Proposition~\ref{Prop:4.6.5} to
$M$. Let $x$ be a point in $X$ satisfying the conditions (a)-(e) of
this proposition, $\widehat{G}:=(M,M),\widehat{T}:=T\cap (M,M)$, and
$\widehat{X}$ be the model variety defined in condition (d). Thanks
to Proposition~\ref{Prop:4.6.3}, there is a
$\widehat{T}$-cross-section $\widehat{X}_{\widehat{T}}$ of
$\widehat{X}$ such that
\begin{equation}\label{eq:7.1:2}
W_{\widehat{G},\widehat{X}}^{(\widehat{X}_{\widehat{T}})}=
W_{G,X}^{(X_T)}\cap M/T,\end{equation}
\begin{equation}\label{eq:7.1:3}\Lambda_{\widehat{G},\widehat{X}}^{(\widehat{X}_{\widehat{T}})}\subset
 \Lambda_{G,X}^{(X_T)}.
\end{equation}
(\ref{eq:7.1:2}) and \cite{fibers}, Corollary 4.16, imply that
$W_{\widehat{G},\widehat{X}}^{(\widehat{X}_{\widehat{T}})}$ is
generated by $s_\alpha$. By (\ref{eq:7.1:3}),
$\Lambda_{\widehat{G},\widehat{X}}^{(\cdot)}\neq
\Lambda(\widehat{\g})$. From Proposition \ref{Prop:7.1.5} it follows
that
$\Lambda_{\widehat{G},\widehat{X}}^{(\widehat{X}_{\widehat{T}})}$ is
spanned by $\alpha$. To complete the proof apply (\ref{eq:7.1:3})
one more time.
\end{proof}

Let us introduce one more $\g$-stratum. Let
$\alpha_1,\alpha_2\in\Delta(\g)$ be such that
$\g^{(\alpha_1,\alpha_2)}\cong\sl_3$. By
$\widetilde{R}^{(\alpha_1,\alpha_2)}\index{raa@$\widetilde{R}^{(\alpha_1,\alpha_2)}$}$
we denote the  $\g$-stratum $(\s,V)$, where
$\s=\so_3\subset\sl_3\cong \g^{(\alpha_1,\alpha_2)}$ and $V$ is the
5-dimensional irreducible $\s$-module.

\begin{Prop}\label{Prop:7.1.8}
Let $\g,X_T$ be such as in Corollary \ref{Cor:7.1.7} and
$\alpha_1,\alpha_2$ such as in the definition of
$\widetilde{R}^{(\alpha_1,\alpha_2)}$. Suppose $s_{\alpha}\in
W_{G,X}^{(X_T)}$ for all $\alpha\in W(\g)\alpha_1$. Then the
following conditions are equivalent:
\begin{enumerate}
\item $\widetilde{R}^{(\alpha_1,\alpha_2)}\rightsquigarrow_\g X$.
\item $\alpha_1\not\in \Lambda_{G,X}^{(X_T)}, 2\alpha_1\in \Lambda_{G,X}^{(X_T)}$.
\end{enumerate}
\end{Prop}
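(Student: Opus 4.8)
The plan is to mimic the structure of the proof of Corollary \ref{Cor:7.1.7}: pass to the rank-one/rank-two Levi $\m:=\t+\g^{(\alpha_1,\alpha_2)}\cong\t+\sl_3$, apply Propositions \ref{Prop:4.6.5} and \ref{Prop:4.6.3} to reduce all statements to the model variety $\widehat{X}$ over $\widehat{G}=(M,M)\cong\SL_3$ (or $\operatorname{PSL}_3$), and then invoke the classification in Proposition \ref{Prop:7.1.5}. Concretely, since $\rank\g>2$ and $\g\not\cong G_2$, the subalgebra $\m$ is a genuine Levi subalgebra, so Proposition \ref{Prop:4.6.5} furnishes $x\in X$ satisfying (a)--(e), and Proposition \ref{Prop:4.6.3} gives a $\widehat{T}$-cross-section $\widehat{X}_{\widehat{T}}$ with \eqref{eq:4.6:9}--\eqref{eq:4.6:11}. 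The hypothesis that $s_\alpha\in W_{G,X}^{(X_T)}$ for all $\alpha\in W(\g)\alpha_1$ means in particular that both simple reflections of the $\sl_3$ lie in $W_{G,X}^{(X_T)}\cap M/T$, so by \eqref{eq:4.6:9} (an equality in the untwisted conical case) we get $W_{\widehat{G},\widehat{X}}^{(\widehat{X}_{\widehat{T}})}=W(\widehat{\g})$; moreover $\widehat{X}$ is coisotropic, untwisted, conical, l.o., and $\cork_{\widehat{G}}(\widehat{X})=0$, by Lemma \ref{Lem:7.1.3} and Proposition \ref{Prop:4.6.5}(d). Thus Proposition \ref{Prop:7.1.5} applies to $\widehat{X}$.

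Next I would translate both implications through this reduction. For $(1)\Rightarrow(2)$: assuming $\widetilde{R}^{(\alpha_1,\alpha_2)}\rightsquigarrow_\g X$, one checks that this $\g$-stratum is realized at (a $G$-translate of) the chosen point $x$ — here the point is that the condition $W_{\widehat{G},\widehat{X}}^{(\cdot)}=W(\widehat{\g})$ together with Proposition \ref{Prop:7.1.5} forces the determining triple of $\widehat{X}$ to be row~3 of Table \ref{Tbl:7.1.6}, i.e. $(\h,V)=(\so_3,0)$ with $V_0=\{0\}$, so that $\widehat{X}\cong T^*(\SL_3/\SO_3)$ (after passing to $\operatorname{PSL}_3$) and $\Lambda_{\widehat{G},\widehat{X}}^{(\cdot)}=2\Lambda(\widehat{\g})$ is spanned by $2\alpha_1,2\alpha_2$. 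Hence $\alpha_1\not\in\Lambda_{\widehat{G},\widehat{X}}^{(\cdot)}$ and $2\alpha_1\in\Lambda_{\widehat{G},\widehat{X}}^{(\cdot)}$, and by \eqref{eq:4.6:11} the latter gives $2\alpha_1\in\Lambda_{G,X}^{(X_T)}$. For $\alpha_1\not\in\Lambda_{G,X}^{(X_T)}$ I would argue by contradiction using Proposition \ref{Prop:7.1.4}: if $\alpha_1\in\Lambda_{G,X}^{(X_T)}$ we are done with a weaker conclusion, so suppose not; since $s_{\alpha_1}\in W_{G,X}^{(X_T)}$ by hypothesis, Proposition \ref{Prop:7.1.4} forces $R^{(\alpha_1)}\rightsquigarrow_\g X$, and one must rule out the coexistence of the two $\g$-strata $R^{(\alpha_1)}$ and $\widetilde{R}^{(\alpha_1,\alpha_2)}$ at the level of $\widehat{X}$ — but $R^{(\alpha_1)}\rightsquigarrow_{\widehat\g}\widehat{X}$ would via Proposition \ref{Prop:7.1.5} put us in row 1 or 2 (where $[\h,\h]\sim_G\g^{(\alpha_1)}\cong\sl_2$), incompatible with the full Weyl group $W(\widehat\g)$ forced above, a contradiction. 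Actually the cleanest route is: the full Weyl group hypothesis plus Proposition \ref{Prop:7.1.5} pins down $\widehat{X}$ to row 3 unconditionally, and then $\alpha_1\not\in 2\Lambda(\widehat{\g})=\Lambda_{\widehat{G},\widehat{X}}^{(\cdot)}$; combined with the equality $\X_{\widehat{G},\widehat{X}}^{(\cdot)}=p(\X_{G,X}^{(X_T)})$ of \eqref{eq:4.6:10} and the compatibility of root lattices, one deduces $\alpha_1\notin\Lambda_{G,X}^{(X_T)}$ (if it were in there, its image would be in $\Lambda_{\widehat{G},\widehat{X}}^{(\cdot)}$ by \eqref{eq:4.6:11}'s reverse inclusion reasoning via the central-automorphism diagram, contradicting row 3).

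For $(2)\Rightarrow(1)$: given $\alpha_1\not\in\Lambda_{G,X}^{(X_T)}$ but $2\alpha_1\in\Lambda_{G,X}^{(X_T)}$, I would again pass to $\widehat{X}$ via \eqref{eq:4.6:9}--\eqref{eq:4.6:11}. The strict inclusion $\alpha_1\notin\Lambda_{G,X}^{(X_T)}$ shows $\Lambda_{\widehat{G},\widehat{X}}^{(\cdot)}\ne\Lambda(\widehat{\g})$ — one needs here that $\alpha_1$ lies in $\widehat{\t}$ and is \emph{not} killed by the projection/inclusion, which follows because $\alpha_1$ is a root of $\widehat\g$ and because $W(\widehat{\g})$-stability of $\Lambda_{\widehat{G},\widehat{X}}^{(\cdot)}$ together with $\Lambda_{\widehat{G},\widehat{X}}^{(\cdot)}\subseteq\Lambda_{G,X}^{(X_T)}\cap\widehat{\t}$ would force $\alpha_1\in\Lambda_{G,X}^{(X_T)}$ if $\alpha_1\in\Lambda_{\widehat{G},\widehat{X}}^{(\cdot)}$. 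So Proposition \ref{Prop:7.1.5} applies, and since we already know $W_{\widehat{G},\widehat{X}}^{(\cdot)}=W(\widehat{\g})$, the only possible row of Table \ref{Tbl:7.1.6} is row 3, giving $(\h,V)\sim(\so_3,V_5)$, i.e. precisely $\widetilde{R}^{(\alpha_1,\alpha_2)}$ as the $\widehat{\g}$-stratum of $x$. Since the $\g$-stratum of $x$ in $X$ restricts to the $\widehat{\g}$-stratum of $x$ in $\widehat{X}$ (by \cite{fibers}, Lemma 2.27, already cited in Proposition \ref{Prop:4.6.5}(d)), this yields $\widetilde{R}^{(\alpha_1,\alpha_2)}\rightsquigarrow_\g X$, as desired.

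\textbf{Main obstacle.} The delicate point is the bookkeeping of which $\g$-stratum sits at the selected point $x$ and how it is transported between $X$, its Levi cross-section $X_M$, and the model variety $\widehat{X}$; in particular one must be careful that $\widehat{X}$ in Proposition \ref{Prop:4.6.5}(d) is built from $M_{\widehat G}(H\cap\widehat G,\eta_n,V/V^H)$ rather than from the full slice module, so matching "$\widetilde{R}^{(\alpha_1,\alpha_2)}\rightsquigarrow_\g X$" with "the determining triple of $\widehat{X}$ is $(\so_3,V_5)$" requires the orthogonality reduction (as in the proof of Proposition \ref{Prop:7.1.5}, $\eta=0$ forces $V\cong V_0\oplus V_0^*$) applied inside $\widehat{G}$, plus control of the $V^H$-factor. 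The other mildly technical point is verifying that $\alpha_1$ (a root of the $\sl_3$) survives the orthogonal projection $p:\t\to\widehat\t$ and genuinely witnesses $\Lambda_{\widehat G,\widehat X}^{(\cdot)}\ne\Lambda(\widehat\g)$, which is where the hypothesis on \emph{all} $\alpha\in W(\g)\alpha_1$ (not just $\alpha_1$ itself) is used, ensuring $W_{\widehat G,\widehat X}^{(\cdot)}$ is the whole Weyl group and hence that Table \ref{Tbl:7.1.6} leaves only row 3 available.
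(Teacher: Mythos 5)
Your direction $(2)\Rightarrow(1)$ is essentially the paper's argument and is fine: pass to $\m=\t+\g^{(\alpha_1,\alpha_2)}$, use \eqref{eq:4.6:9}--\eqref{eq:4.6:11} to get a coisotropic untwisted model variety $\widehat{X}$ over $\widehat{G}\cong\SL_3$ with full Weyl group and $\Lambda^{(\cdot)}_{\widehat G,\widehat X}\neq\Lambda(\widehat\g)$, and let Proposition \ref{Prop:7.1.5} force row 3. The direction $(1)\Rightarrow(2)$, however, has two genuine gaps. First, you assert that the stratum $\widetilde{R}^{(\alpha_1,\alpha_2)}$ "is realized at (a $G$-translate of) the chosen point $x$". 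Proposition \ref{Prop:4.6.5} produces $x$ lying over a \emph{generic} $\xi\in\z(\m)$; there is no a priori reason this $x$ carries the given stratum. Bridging this is the bulk of the paper's proof: one takes the locus $Y$ of points with stratum $\widetilde{R}^{(\alpha_1,\alpha_2)}$, shows $\overline{Y}\quo G$ has pure codimension $2$ and that $\psi_{G,X}(Y)$ is dense in $\pi_{W(\g),\t}(\ker\alpha_1\cap\ker\alpha_2)$ (via \cite{alg_hamil}, Theorem 1.2.3), and then moves $x$ by $G$ and \emph{changes the cross-section} $X_T$ so that conditions (a)--(e) hold at a point of $Y$. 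This step cannot be waved away.

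Second, your mechanism for extracting $\alpha_1\notin\Lambda^{(X_T)}_{G,X}$ is invalid: \eqref{eq:4.6:11} reads $\Lambda^{(\cdot)}_{\widehat G,\widehat X}\subset\Lambda^{(X_T)}_{G,X}\cap\widehat\t$, and the "reverse inclusion reasoning" you invoke is not available (central automorphisms of $\widehat{X}'$ need not come from $X$), nor is your claim that "full Weyl group plus Proposition \ref{Prop:7.1.5} pins down row 3 unconditionally" correct — that proposition constrains $(\h,V)$ only under the hypothesis $\Lambda^{(\cdot)}_{\widehat G,\widehat X}\neq\Lambda(\widehat\g)$, which at this stage of $(1)\Rightarrow(2)$ is exactly what you are trying to prove. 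The paper's actual route is different and is where the hypothesis on \emph{all} of $W(\g)\alpha_1$ earns its keep: assume (2) fails; then Proposition \ref{Prop:7.1.4} forces $\alpha_1\in\Lambda^{(X_T)}_{G,X}$, and $W$-stability propagates this to every root of the same length, so all of $W(\g)\alpha_1$ lies in $\X^{(X_T)}_{G,X}$. By the weight-lattice \emph{equality} \eqref{eq:4.6:10}, the orthogonal projections of these roots to $\widehat\t$ lie in $\X^{(\cdot)}_{\widehat G,\widehat X}$; crucially these projections span all of $\X_{\SL_3}$ (they include fundamental weights, not just roots). Then Lemma \ref{Lem:2.7.1} gives $s_{\alpha_1}\pi_1-\pi_1=-\alpha_1\in\Lambda^{(\cdot)}_{\widehat G,\widehat X}=2\Lambda(\widehat\g)$, a contradiction. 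Your proposal contains no substitute for this weight-lattice/Lemma \ref{Lem:2.7.1} argument, and without it the implication $(1)\Rightarrow(2)$ does not go through.
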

\begin{proof}
$(2)\Rightarrow (1)$.  Analogously to the proof of Corollary
\ref{Cor:7.1.7}, we reduce the proof to the situation $G=\SL_3,
X=M_G(H,\eta,V), \cork_G(X)=0$, $\eta$ is nilpotent. We have
$W_{G,X}^{(\cdot)}=W(\g)$. Since $\Lambda_{G,X}^{(X_T)}$ is
$W_{G,X}^{(X_T)}$-stable (Lemma~\ref{Lem:2.7.1}), we see that all
three inclusions $\alpha_1\in \Lambda_{G,X}^{(X_T)},\alpha_2\in
\Lambda_{G,X}^{(X_T)}, \alpha_1+\alpha_2\in \Lambda_{G,X}^{(X_T)}$
are equivalent. (1) follows now from Proposition \ref{Prop:7.1.5}.

$(1)\Rightarrow (2)$. Assume that (2) does not hold. Note that
$s_\alpha\in W_{G,X}^{(X_T)}, \alpha\in \Lambda_{G,X}^{(X_T)}$ for
all $\alpha\in \Delta(\g)$ of the same length with $\alpha_1$.
Indeed, by the choice $\g$ and $\alpha_1$, there are
$\alpha^1,\ldots,\alpha^k\in W(\g)\alpha_1$ such that
$s_{\alpha^k}\ldots s_{\alpha^1}\alpha=\alpha_1$. But
$s_{\alpha^j}\in W_{G,X}^{(X_T)},j=\overline{1,k}$ and
$\Lambda_{G,X}^{(X_T)}$ is $W_{G,X}^{(X_T)}$-stable.

Let $Y\subset X$ be the  set of all points $x\in X$  such that $Gx$
is closed and the $\g$-stratum of $x$ is equivalent to
$\widetilde{R}^{(\alpha_1,\alpha_2)}$. Thanks to the Luna slice
theorem, $Y$ is a locally closed subvariety of $X$ and
$\overline{Y}\quo G=\overline{Y\quo G}$ is a subvariety of pure
codimension 2 in $X\quo G$.

There is a point $x\in Y$ such that $\g_x\subset
\g^{(\alpha_1,\alpha_2)}$. Let us check that the subspace
$\ker\alpha_1\cap\ker\alpha_2\subset \t$ is a Cartan subalgebra in
$\z_\g(\g_x)$. Indeed, the inclusion
$\ker\alpha_1\cap\ker\alpha_2\subset \z_\g(\g_x)$ stems from
$\g_x\subset \g^{(\alpha_1,\alpha_2)}$. On the other hand, $\rank
\z_\g(\g_x)<\rank \g-1$, because   $\g_x\not\sim_G\g^{(\beta)}$ for
any $\beta\in\Delta(\g)$.

By the previous paragraph,
$\psi_{G,X}(Y)\subset\pi_{W(\g),\t}(\ker\alpha_1\cap\ker\alpha_2)$.
By \cite{alg_hamil}, Theorem 1.2.3, $(\psi_{G,X}\quo
G)^{-1}(\pi_{W,\t}(\ker\alpha_1\cap\ker\alpha_2))$ has pure
codimension 2 in $X\quo G$ and $\psi_{G,X}(Y)$ is dense in
$\pi_{W(\g),\t}(\ker\alpha_1\cap\ker\alpha_2)$.

Replacing $x$ with $gx$ for appropriate $g\in G$, we get
$\mu_{G,X}(x)_s\in\z(\m)\cap\m^{pr}$ (here automatically
$\g_x\subset \g^{(\alpha_1,\alpha_2)}$). Then
$\widehat{\psi}_{G,X}(x)\in
\pi_{W_{G,X}^{(\cdot)},\a_{G,X}^{(\cdot)}}(\z(\m_0)\cap\m_0^{pr})$
for some  Levi subalgebra  $\m_0\subset\g$ such that   $\t\subset
\m_0,\m_0\sim_G\m$. So, replacing  $\m$ with $\m_0$ if necessary, we
may assume that $\widehat{\psi}_{G,X}(x)\in
\pi_{W_{G,X}^{(\cdot)},\a_{G,X}^{(\cdot)}}(\z(\m)\cap\m^{pr})$.

Now let $X_T'$ be a $T$-cross-section of $X$. There exists $w\in W$
such that $wW_{G,X}^{(X_T)}w^{-1}=W_{G,X}^{(X_T')},
w\Lambda_{G,X}^{(X_T)}=\Lambda_{G,X}^{(X_T')}$. By the first
paragraph of the proof of $(1)\Rightarrow(2)$,
$s_{\alpha_1},s_{\alpha_2}\in W_{G,X}^{(X_T')},$ $\alpha_1\in
\Lambda_{G,X}^{(X_T')}$. Replacing $X_T$ with $X_T'$, if necessary,
we may assume that $X_T$ is contained in a unique $M$-section of $X$
containing $x$. So $x$ satisfies conditions (a)-(e) of Proposition
\ref{Prop:4.6.5}. Set $\widehat{G}:=(M,M)$ and let $\widehat{X}$ be
the model variety constructed in (d). By the choice of $x$, we get
$\widehat{X}=T^*(\SL_3/H)$ with $\h=\so_3$. So
$W_{\widehat{G},\widehat{X}}^{(\widehat{X}_{\widehat{T}})}=W(\widehat{\g})$,
$\Lambda_{\widehat{G},\widehat{X}}^{(\widehat{X}_{\widehat{T}})}=2\Lambda(\widehat{\g})$
(Proposition~\ref{Prop:7.1.5}).

Applying assertion 2 of Proposition~\ref{Prop:4.6.3}, we see that
for any $w\in W(\g)$ the projection of $w\alpha_1$ to
$\widehat{\t}:=\t\cap [\m,\m]$ lies in
$\X_{\widehat{G},\widehat{X}}^{(\widehat{X}_{\widehat{T}})}$ for an
appropriate  $\widehat{T}$-cross-section $\widehat{X}_{\widehat{T}}$
of $\widehat{X}$. But such projections span the lattice
$\X_{\SL_3}$. By Lemma~\ref{Lem:2.7.1},
$w\xi-\xi\in\Lambda_{\widehat{G},\widehat{X}}^{(\widehat{X}_{\widehat{T}})}$
for any $\xi\in \Lambda$, $w\in
W_{\widehat{G},\widehat{X}}^{(\widehat{X}_{\widehat{T}})}$.  Note
that $s_{\alpha_1}(\pi_1)-\pi_1=-\alpha_1$. Contradiction with
$\Lambda_{\widehat{G},\widehat{X}}^{(\widehat{X}_{\widehat{T}})}=2\Lambda(\widehat{\g})$.
\end{proof}
\section{Computation of root and weight lattices for affine homogeneous spaces}\label{SECTION_root}
\subsection{Introduction}\label{SUBSECTION_root_intro}
In this section $G$ is a connected reductive group,  $B\subset G$ is
its Borel subgroup and $T\subset B$ is its maximal torus. Throughout
the section $X=G/H$ is an affine homogeneous space  of rank
$\rank(G)$. Our objective is to compute the lattices
$\Lambda_{G,X},\X_{G,X}$. At first, we compute the root lattices and
then, using this computation, determine the weight lattices.

Recall that the root lattice $\Lambda_{G,G/H}$ depends only on the
pair $(\g,\h)$ (by Proposition~\ref{Prop:1.4.4}) so we write
$\Lambda(\g,\h)$ instead of $\Lambda_{G,G/H}$.
Lemma~\ref{Prop:1.4.5} implies that $\Lambda(\g,\h)\subset
\Lambda(\g,\h_1)$ for any ideal $\h_1\subset\h$.

Now let $\g=\z(\g)\oplus \bigoplus_{i=1}^k \g_i$ be the
decomposition into the direct sum of the center and simple ideals,
and $\h_i=\h\cap\g_i$. By Proposition \ref{Prop:1.47},
$\Lambda(\g,\h)=\bigoplus_{i=1}^k \Lambda(\g_i,\h_i)$. So it is
enough to compute $\Lambda(\g,\h)$ for simple $\g$.

\begin{defi}\label{Def:7.0.1}
Let $\g$ be simple. A reductive subalgebra  $\h\subset\g$ is called
$\Lambda$-essential if $\a(\g,\h)=\t$ and for any ideal $\h_1\subset
\h$ the inclusion $\Lambda(\g,\h)\subset \Lambda(\g,\h_1)$ is
strict.
\end{defi}

The following theorem is the main result on the computation of
$\Lambda(\g,\h)$.

\begin{Thm}\label{Thm:7.0.2}
\begin{enumerate}
\item All $\Lambda$-essential subalgebras $\h\subset\g$ together with
the corresponding lattices $\Lambda(\g,\h)$ are presented
in~\ref{Tbl:7.0.3}.
\item For any reductive subalgebra $\h\subset\g$ there is a unique ideal $\h^{\Lambda-ess}\subset\h
\index{hless@$\h^{\Lambda-ess}$}$ such that $\h^{\Lambda-ess}$ is
$\Lambda$-essential and
$\Lambda(\g,\h)=\Lambda(\g,\h^{\Lambda-ess})$. Such
$\h^{\Lambda-ess}$ is maximal (w.r.t inclusion) among all ideals in
$\h$ that are $\Lambda$-essential subalgebras in $\g$.
\end{enumerate}
\end{Thm}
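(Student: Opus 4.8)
The plan is to prove part (1) by a case-by-case analysis and then to deduce part (2) as a formal consequence of the structure that emerges. For part (1), I would first reduce, via Proposition~\ref{Prop:1.47} and Lemma~\ref{Prop:1.4.5}, to the case of simple $\g$, and then use the Hamiltonian machinery of Section~\ref{SECTION_Ham}: by Theorem~\ref{Thm:1.22}(3) applied to $X_0 = G/H$ (which is affine of rank $\rank G$, hence l.o.\ by Lemma~\ref{Lem:7.1.3}(3)), the lattice $\Lambda(\g,\h)$ equals $\Lambda_{G,X}^{(\Sigma)}$ for $X = T^*(G/H)$ an untwisted conical l.o.\ Hamiltonian variety, so the results of Subsection~\ref{SUBSECTION_root1} apply directly. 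The key input is that an element $\alpha\in\Delta(\g)$ with $s_\alpha\in W_{G,X}^{(X_T)}$ either lies in $\Lambda_{G,X}^{(X_T)}$, or fails to, in which case Proposition~\ref{Prop:7.1.4} forces $R^{(\alpha)}\rightsquigarrow_\g X$ and $2\alpha\in\Lambda_{G,X}^{(X_T)}$; moreover Corollary~\ref{Cor:7.1.7} and Proposition~\ref{Prop:7.1.8} pin down exactly when $\alpha\notin\Lambda$ can occur when $\rank\g>2$, namely via the strata $R^{(\alpha)}$ (inside an $\sl_2$ with $H^\circ$ a torus) and $\widetilde R^{(\alpha_1,\alpha_2)}$ (inside an $\sl_3$ with $\h=\so_3$). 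So the root lattice is determined by which strata $R^{(\alpha)}$ and $\widetilde R^{(\alpha_1,\alpha_2)}$ occur in $T^*(G/H)$, together with the Weyl group $W_{G,G/H}$, which was computed in \cite{Weyl}.

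The heart of part (1) is therefore to run through the classification: for each simple $\g$ and each reductive $\h\subset\g$ with $\a(\g,\h)=\t$ (these pairs, and the corresponding $W_{G,G/H}$, are available from \cite{ranks} and \cite{Weyl}), determine the $\g$-strata of $T^*(G/H)$ that are $G$-conjugate to some $R^{(\alpha)}$ or $\widetilde R^{(\alpha_1,\alpha_2)}$. Concretely, a stratum $(\g_y, T_yY/\g_*y)$ of a point $y = (gH,\xi)$ with closed orbit has $\g_y \subset \h$ (up to conjugacy) and slice module built from $\g/\h\oplus(\g/\h)^*$; so one must identify for which $\alpha$ there is $x\in\h$ with $x\sim_G\alpha^\vee$ and the appropriate slice, and similarly for the $\so_3\subset\sl_3$ case. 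The $\Lambda$-essential condition then strips away the contributions already accounted for by proper ideals $\h_1\subset\h$: if $\Lambda(\g,\h) = \Lambda(\g,\h_1)$ for some ideal $\h_1$ we discard $\h$ from the table, and what remains should be a short list, which one displays with its lattices in Table~\ref{Tbl:7.0.3}. I expect the bulk of the work (and the main obstacle) to be precisely this bookkeeping: correctly enumerating the finitely many reductive $\h$ with $\a(\g,\h)=\t$ in each type, correctly reading off the relevant strata and the Weyl group in each case, and checking consistency of the resulting lattice against the $W_{G,G/H}$-stability constraint of Lemma~\ref{Lem:2.7.1}.

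For part (2), I would argue as follows. Given an arbitrary reductive $\h\subset\g$, write $\h = \z(\h)\oplus\bigoplus_j\h_j$ with $\h_j$ simple; the decomposition $\g=\z(\g)\oplus\bigoplus_i\g_i$ together with Proposition~\ref{Prop:1.47} reduces everything to simple $\g$, and then I claim the desired ideal $\h^{\Lambda\text{-}ess}$ is the sum of those simple ideals of $\h$ (together with a suitable central part) that actually contribute to $\Lambda(\g,\h)$, i.e.\ whose removal strictly enlarges the lattice. The point is that Lemma~\ref{Prop:1.4.5} gives $\Lambda(\g,\h)\subset\Lambda(\g,\h_1)$ for every ideal $\h_1\subset\h$, so the function $\h_1\mapsto\Lambda(\g,\h_1)$ is order-reversing on ideals; one must show that among ideals $\h_1$ with $\Lambda(\g,\h_1)=\Lambda(\g,\h)$ there is a smallest one and that it is $\Lambda$-essential. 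Existence of a smallest such ideal follows if the collection is closed under intersection of ideals, which I would verify using the product structure of $\Lambda$ over the strata description: $\Lambda(\g,\h)$ is generated by the $s_\alpha$-relevant roots coming from strata supported in the various simple factors, and removing a factor that carries no such stratum does not change the lattice, while removing one that does strictly enlarges it. Maximality of $\h^{\Lambda\text{-}ess}$ among $\Lambda$-essential ideals, and uniqueness, then follow from the definition together with this order-reversing behaviour. The main subtlety here is handling the central torus $\z(\h)$ correctly and making sure the "essential" part is genuinely an ideal and genuinely $\Lambda$-essential in the sense of Definition~\ref{Def:7.0.1}; but once part (1) is in hand this is essentially formal.
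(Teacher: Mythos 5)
Your overall framework is the same as the paper's: reduce to simple $\g$ via Proposition~\ref{Prop:1.47}, pass to $X=T^*(G/H)$ via Theorem~\ref{Thm:1.22}, and control $\Lambda(\g,\h)$ through the strata $R^{(\alpha)}$ and $\widetilde R^{(\alpha_1,\alpha_2)}$ using Propositions~\ref{Prop:7.1.4}, \ref{Prop:7.1.8} and Corollary~\ref{Cor:7.1.7}. But there is a genuine gap at the step you dismiss as ``bookkeeping.'' You propose to enumerate ``the finitely many reductive $\h$ with $\a(\g,\h)=\t$'' and test each one. That set is not finite: $\h=\{0\}$, generic tori, and plenty of small reductive subalgebras all have $\a(\g,\h)=\t$, so there is no pre-existing finite list in \cite{ranks} or \cite{Weyl} to run through. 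The actual content of the proof is the derivation of strong \emph{necessary} conditions for $\Lambda$-essentiality that cut the candidates down to an explicit finite list before any case-checking begins. Concretely: since $\Lambda(\g,0)=\Lambda(\g)$ and $\Delta(\g)^{min}$ generates $\Lambda(\g)$, a nonzero $\Lambda$-essential $\h$ must miss some root, and Proposition~\ref{Prop:7.1.4} then produces $h\in\h$ with $h\sim_G\alpha^\vee$ satisfying the trace identity $\tr_\g h^2=2\tr_\h h^2+8$ (the paper's Lemma~\ref{Lem:7.2.1}); feeding the $\widetilde R$-stratum through the additivity of the Dynkin index yields the Diophantine systems (\ref{eq:7.2:3}), (\ref{eq:7.2:4}) on the indices and the constants $k_{\h_j}$ (Lemma~\ref{Lem:7.2.2}); and a separate classification (Proposition~\ref{Prop:7.2.5}, resting on \cite{wc} and \cite{Vin_Heis}) lists the reductive subalgebras of classical $\g$ containing a coroot-conjugate not lying in a proper ideal. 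You gesture at the existence of $x\in\h$ with $x\sim_G\alpha^\vee$ but never turn it into the quantitative finiteness mechanism, so your plan has no way to get the classification started.

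Two smaller points. First, even after the candidates are identified, computing the lattices in Table~\ref{Tbl:7.0.3} is not read off from the strata alone: the paper repeatedly uses Proposition~\ref{Prop:3.7.1} ($\Lambda_{G,G/H}=\X_{G,G/\widetilde H}$) together with Kr\"amer's tables, Panyushev's rank-one tables, and Popov's generic-stabilizer computations to pin down each lattice and, just as importantly, to rule out candidates by showing $\Lambda(\g,\h)=\Lambda(\g)$; your proposal omits this bridge from root lattices back to weight lattices of normalizers. Second, your part (2) is stated in terms of a \emph{smallest} ideal with the given lattice, whereas the theorem asserts $\h^{\Lambda\text{-}ess}$ is \emph{maximal} among $\Lambda$-essential ideals; in the paper both the existence and the maximality/uniqueness fall out of the explicit case analysis (each case identifies the ideal generated by the relevant $h$ or $\s$ and shows it is the unique $\Lambda$-essential ideal carrying the lattice), rather than from an abstract lattice-of-ideals argument, and your closure-under-intersection claim is not justified by anything you set up.
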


\begin{longtable}{|c|c|c|c|}
\caption{$\Lambda$-essential subalgebras $\h\subset\g$ and lattices
$\Lambda(\g,\h)$}\label{Tbl:7.0.3}\\\hline
N&$\g$&$\h$&$\Lambda(\g,\h)$\\\endfirsthead\hline
N&$\g$&$\h$&$\Lambda(\g,\h)$\\\endhead\hline 1&$\sl_n,n\geqslant
2$&$\so_n$&$2\Lambda(\g)$\\\hline
2&$\sl_{2n+1}$&$\sl_{n+1}$&$\{\sum_{i\neq n+1}x_i\varepsilon_i|
x_i\in\Z, \sum_{i\neq n+1}x_i=0\}$\\\hline
3&$\sl_{2n+1}$&$\sp_{2n}$&$\{\sum x_i\varepsilon_i| x_i\in\Z,
\sum_{i}x_{2i}=\sum_{i}x_{2i+1}=0\}$\\\hline
4&$\so_{2n+1},n\geqslant 3$&$\so_{n+1}$&$\{\sum_{i=1}^n
x_i\varepsilon_i| x_i\in\Z, \sum_{i=1}^n x_i\equiv 0 (\mod 2)\}$
\\\hline 5&$\so_{2n+1},n\geqslant 3$&$\so_{n+1}\oplus \so_n$&$2\Lambda(\g)$\\\hline
6&$\so_{2n+1},n\geqslant 3$&$\gl_n$&$\{\sum_{i=1}^n
x_i\varepsilon_i| x_i\in\Z, \sum_{i=1}^{[n/2]} x_{n-2i}\equiv 0
(\mod 2)\}$\\\hline 7&$\sp_{2n},n\geqslant
2$&$\sl_n$&$\{2\sum_{i=1}^n x_i\varepsilon_i| x_i\in\Z\}$\\\hline
8&$\sp_{2n},n\geqslant 2$&$\gl_n$&$2\Lambda(\g)$\\\hline
9&$\so_{2n},n\geqslant 4$&$\so_n\oplus\so_n$&$2\Lambda(\g)$\\\hline
10&$G_2$&$A_1\times \widetilde{A}_1$&$2\Lambda(\g)$\\\hline
11&$F_4$&$C_3$&$\{\sum_{i=1}^4 x_i\varepsilon_i|
x_i\in\Z,\sum_{i=1}^4 x_i\equiv 0 (\mod 2)\}$\\\hline
12&$F_4$&$C_3\times A_1$&$2\Lambda(\g)$\\\hline
13&$E_6$&$C_4$&$2\Lambda(\g)$\\\hline
14&$E_7$&$A_7$&$2\Lambda(\g)$\\\hline
15&$E_8$&$D_8$&$2\Lambda(\g)$\\\hline
\end{longtable}

Proceed to computing  weight lattices. Till the end of the
subsection $G$ is an arbitrary connected reductive group. By
$\g_1,\ldots,\g_k$ we denote all simple ideals of $\g$.

Let us introduce some notation. Let $H$ be a reductive subgroup of
$G$ with $\rank_G(G/H)=\rank G$. Let $\widehat{H}$ denote the
connected subgroup in $G$ with Lie algebra
$\bigoplus_{i=1}^k(\h\cap\g_i)^{\Lambda-ess}$. By
$H^{\X-sat}\index{hxsat@$H^{\X-sat}$}$ we denote the inverse image
of $\A_{G,G/\widehat{H}}\subset N_G(\widehat{H})/\widehat{H}$ in
$N_G(\widehat{H})$. It follows directly from the definition that
$\X_{G,G/H^{\X-sat}}=\Lambda(\g,\h)$. Here is the main result
concerning the computation of weight lattices.

\begin{Thm}\label{Thm:7.0.5}
Let $H$ be as in the previous paragraph. Suppose $G$ is
algebraically simply connected (i.e., is the direct product of a
torus and  a simply connected semisimple group). Set $H_0:=H\cap
H^{\X-sat}$.
\begin{enumerate}
\item $\X_{G,G/H}=\X_{G,G/H_0}$. Further, $\h_0=\h^{\Lambda-ess}$ and $H_0$ is the maximal
normal subgroup of $H$ contained in $H^{\X-sat}$. Finally,
$H_0^{\X-sat}=H^{\X-sat}$.
\item Suppose
 $H\subset H^{\X-sat}$. Recall that there is the natural duality $\X_{G,G/H^\circ}/\Lambda(\g,\h)$ и
$H^{\X-sat}/H^\circ=\A_{G,G/H^\circ}$. When $G$ is simple this
duality is described in Remark \ref{Rem:7.0.6} below. In the general
case there are the equalities
\begin{equation}\label{eq:7.0:1}
\begin{split}&\X_{G,G/H^\circ}/\Lambda(\g,\h)=
\X_{Z(G)^\circ}\oplus\bigoplus_{i=1}^k
\X_{G_i,G_i/H_i^\circ}/\Lambda(\g_i,\h_i),\\
&H^{\X-sat}/H^\circ=Z(G)^\circ\times \prod_{i=1}^k
H_i^{\X-sat}/H_i^\circ.
\end{split}
\end{equation}
The duality between $\X_{G,G/H^\circ}/\Lambda(\g,\h)$ и
$H^{\X-sat}/H^\circ$ is the direct product of the dualities between
the corresponding factors in (\ref{eq:7.0:1}).
\item If $H\subset H^{\X-sat}$, then
$\X_{G,G/H}$ coincides with the inverse image of the annihilator of
$H/H^\circ$ in $\X_{G,G/H^\circ}/\Lambda(\g,\h)$ under the natural
epimorphism
$\X_{G,G/H^\circ}\twoheadrightarrow\X_{G,G/H^\circ}/\Lambda(\g,\h)$.
\end{enumerate}
\end{Thm}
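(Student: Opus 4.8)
The plan is to reduce everything to the root-lattice computation (Theorem \ref{Thm:7.0.2}) and the structural results on central automorphisms (Propositions \ref{Prop:1.3.3}, \ref{Prop:1.4.4}, Lemma \ref{Prop:1.4.5}), handling the three assertions in turn. For assertion (1), I would first show $\X_{G,G/H}=\X_{G,G/H_0}$. Since $H_0\subset H$, Lemma \ref{Prop:1.4.5} applied to the dominant $G$-morphism $G/H_0\twoheadrightarrow G/H$ gives $\X_{G,G/H}\subset\X_{G,G/H_0}$; both spaces are quasiaffine (as $H,H_0$ are reductive), so the same lemma supplies the dual picture between $\A_{G,G/H_0}\to\A_{G,G/H}$ and the quotients by root lattices. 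The key point is that $\Lambda(\g,\h)=\Lambda(\g,\h_0)$: indeed $H_0=H\cap H^{\X-sat}$ and by construction $\X_{G,G/H^{\X-sat}}=\Lambda(\g,\h)$, so $\Lambda(\g,\h)\subset\X_{G,G/H_0}$, while $\Lambda(\g,\h_0)\supset\Lambda(\g,\h)$ automatically; combined with the reverse inclusion coming from $\h_0\supset\h^{\Lambda-ess}$ (which I verify using part (2) of Theorem \ref{Thm:7.0.2}, i.e.\ that $\h^{\Lambda-ess}$ is the maximal $\Lambda$-essential ideal and is characterized by $\Lambda(\g,\h^{\Lambda-ess})=\Lambda(\g,\h)$), one gets $\h_0=\h^{\Lambda-ess}$. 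Given that, $\X_{G,G/H}$ and $\X_{G,G/H_0}$ contain the same lattice $\Lambda(\g,\h)$ with finite index, and the index is pinned down by the annihilator of $H/H^\circ=H/H_0^\circ$ inside $\X_{G,G/H^\circ}/\Lambda(\g,\h)$ — but $H_0$ and $H$ have the same image there because $H/H_0$ embeds in $\A_{G,G/\widehat H}$ by definition of $H^{\X-sat}$; hence equality. The maximality of $H_0$ among normal subgroups of $H$ inside $H^{\X-sat}$ and the identity $H_0^{\X-sat}=H^{\X-sat}$ then follow formally from $\widehat{H_0}=\widehat H$ (same $\Lambda$-essential part) and the definition of $H^{\X-sat}$ as a preimage of a fixed group of central automorphisms.

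For assertion (2), I would invoke Proposition \ref{Prop:1.47}: since $\rank_G(G/H)=\rank G$ and $G/H$ is quasiaffine, $\Lambda_{G,G/H^\circ}=\bigoplus_i\Lambda(\g_i,\h_i)$, and likewise $\X_{G,G/H^\circ}$ decomposes compatibly with the splitting $G=Z(G)^\circ G_1\cdots G_k$ because $G$ is algebraically simply connected (so the character lattice of $B$ is the direct sum of the simple pieces plus the torus part, and a $B$-semiinvariant splits accordingly). The duality $\X_{G,G/H^\circ}/\Lambda(\g,\h)\cong (H^{\X-sat}/H^\circ)^\vee=\A_{G,G/H^\circ}^\vee$ is exactly the content of Definition \ref{Def:1.4.1} together with Proposition \ref{Prop:1.3.3} (the pairing $\iota_{G,X}$ is a perfect pairing between $\A_{G,X}$ and $\X_{G,X}/\Lambda_{G,X}$ once one knows $\im\iota_{G,X}$ is closed and $\Lambda_{G,X}$ is its annihilator). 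The fact that this global duality is the direct product of the per-factor dualities is then just functoriality of $\iota$ under the decomposition, which I would spell out via Lemma \ref{Prop:1.4.5} applied to the projections $G/H^\circ\to G_i/H_i^\circ$ (after the standard reduction that $\A$ and $\X/\Lambda$ of a product are the products).

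For assertion (3), assume $H\subset H^{\X-sat}$. Then $H$ corresponds to a subgroup $H/H^\circ\subset H^{\X-sat}/H^\circ=\A_{G,G/H^\circ}$, and I would argue that $\X_{G,G/H}$ is precisely the set of $\lambda\in\X_{G,G/H^\circ}$ whose image in $\X_{G,G/H^\circ}/\Lambda(\g,\h)$ pairs trivially with $H/H^\circ$ under the duality of (2): a rational $B$-semiinvariant $f_\lambda$ on $G/H^\circ$ descends to $G/H$ iff $H/H^\circ$ acts trivially on the line $\C f_\lambda$, and that action is exactly $h\mapsto\lambda(\iota_{G,G/H^\circ}(h))$, which depends only on the class of $\lambda$ mod $\Lambda(\g,\h)$ since $H/H^\circ$ consists of central automorphisms. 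This identifies $\X_{G,G/H}$ with the preimage of $\mathrm{Ann}(H/H^\circ)$ as claimed.

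The main obstacle I expect is assertion (1): making rigorous the chain of equalities $\h_0=\h^{\Lambda-ess}$, $\Lambda(\g,\h)=\Lambda(\g,\h_0)$, and $H_0^{\X-sat}=H^{\X-sat}$ requires juggling three a priori different subgroups ($H\cap H^{\X-sat}$, the connected group with Lie algebra $\bigoplus_i(\h\cap\g_i)^{\Lambda-ess}$, and preimages of $\A$-groups) and showing they interact as expected; the delicate point is that $H\cap H^{\X-sat}$ need \emph{not} be connected, so one cannot work purely at the Lie-algebra level and must track the component groups, using Lemma \ref{Prop:1.4.5} (the duality between $\A$-morphisms and $\X/\Lambda$) to control how $H/H^\circ$ sits inside $\A_{G,G/H^\circ}$. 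Everything else is essentially bookkeeping on top of the already-established root-lattice theorem and Knop's structure theory of central automorphisms.
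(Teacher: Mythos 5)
Your treatment of assertion (3) matches the paper's, and the formal part of assertion (2) is fine, but there are two genuine gaps. The first is in your proof of the central claim $\X_{G,G/H}=\X_{G,G/H_0}$ of assertion (1). You write ``the index is pinned down by the annihilator of $H/H^\circ=H/H_0^\circ$ inside $\X_{G,G/H^\circ}/\Lambda(\g,\h)$'' and that ``$H/H_0$ embeds in $\A_{G,G/\widehat H}$.'' Both statements are wrong: first, $H_0^\circ\neq H^\circ$ in general (indeed $\h_0=\h^{\Lambda-ess}$ is usually a proper ideal of $\h$); second, it is $H_0/\widehat H$ that maps into $\A_{G,G/\widehat H}$, while $H/H_0=H/(H\cap H^{\X-sat})$ injects into $N_G(\widehat H)/H^{\X-sat}$, i.e.\ consists precisely of the classes acting by \emph{non-central} automorphisms. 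For such elements the annihilator pairing of Definition~\ref{Def:1.4.1} says nothing, so your mechanism cannot ``pin down the index.'' The paper's route is different and does work: by Lemma~\ref{Prop:1.4.5} the inclusion $\X_{G,G/H}/\Lambda(\g,\h)\hookrightarrow\X_{G,G/H_0}/\Lambda(\g,\h)$ is dual to an epimorphism $\A_{G,G/H_0}\twoheadrightarrow\A_{G,G/H}$, and the uniqueness statement in that lemma identifies the map $\A_{G,G/H_0}\to\Aut^G(G/H)$ with $H^{\X-sat}/H_0\to N_G(H)/H$, whose kernel is $(H\cap H^{\X-sat})/H_0=\{1\}$; injectivity plus surjectivity of the dual forces $\X_{G,G/H}=\X_{G,G/H_0}$. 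Relatedly, your derivation of $\h_0=\h^{\Lambda-ess}$ from ``$\h^{\Lambda-ess}$ is characterized by $\Lambda(\g,\h^{\Lambda-ess})=\Lambda(\g,\h)$'' is not valid: equality of root lattices does not single out $\h^{\Lambda-ess}$ among ideals of $\h$ ($\h$ itself satisfies it), and you never show $\h_0$ is $\Lambda$-essential. The paper instead proves $\h_0=[\h^{\X-sat},\h^{\X-sat}]=\widehat\h$ using that $\h^{\X-sat}/\widehat\h$ is the Lie algebra of the diagonalizable group $\A_{G,G/\widehat H}$ and that $\widehat\h$ is the maximal ideal of $\h^{\X-sat}$ with $\a(\g,\cdot)=\t$.

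The second gap is in assertion (2): the theorem explicitly refers to the description of the duality in Remark~\ref{Rem:7.0.6} and Table~\ref{Tbl:7.0.7} when $G$ is simple, and the paper regards verifying that description as ``the only nontrivial claim'' of this assertion. It requires the Frobenius-reciprocity argument ($\X_{G,G/H^\circ}$ is spanned by the $\lambda$ with $V(\lambda^*)^\h\neq 0$, and $\chi_\lambda$ is the character of $H^{\X-sat}/H^\circ$ on $V(\lambda^*)^\h$) together with a case-by-case determination of $\X_{G,G/H^\circ}$ from Kr\"amer's and Panyushev's tables and from the $F_4$ computation in the proof of Theorem~\ref{Thm:7.0.2}. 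Your proposal omits all of this, so even granting the formal product decomposition (\ref{eq:7.0:1}), the substantive content of assertion (2) is not addressed.
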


\begin{Rem}\label{Rem:7.0.6}
This remark gives a more or less explicit description of the duality
between $\X_{G,G/H^\circ}/\X_{G,G/H^{\X-sat}}$ and
$H^{\X-sat}/H^\circ$, where $G$ is a simple group and  $\h$ is a
subalgebra of  $\g$ indicated in Table~\ref{Tbl:7.0.3}. By
$\chi_\lambda$ we denote the character of $H^{\X-sat}/H^\circ$
corresponding to a weight $\lambda\in \X_{G,G/H^\circ}$. Note, at
first, that the center of $G$ is identified with
$(\X_G/\Lambda(\g))^*$. Further, there is the natural homomorphism
$\X_{G,G/H^\circ}/\Lambda(\g,\h)\rightarrow \X_G/\Lambda(\g)$. So
any element of $Z(G)$ determines an element in
$(\X_{G,G/H^\circ}/\Lambda(\g,\h))^*$. Note that $Z(G)\subset
H^{\X-sat}$. Below we will see that the image of the map
$Z(G)\rightarrow (\X_{G,G/H^\circ}/\Lambda(\g,\h))^*$ equals
$Z(G)\cap H^\circ$, and the corresponding map
$\X_{G,G/H^\circ}/\Lambda(\g,\h)\rightarrow \X(Z(G)/Z(G)\cap
H^\circ)$ coincides with $\lambda\mapsto \chi_\lambda$. Therefore it
is enough to determine: \begin{enumerate} \item the lattice
$\X_{G,G/H^\circ}$, \item elements from $H^{\X-sat}$ whose images in
$H^{\X-sat}/Z(G)H^\circ$ generate the last group, \item characters
$\chi_\lambda$ for these elements $\lambda$.
\end{enumerate}
There are no  elements as in (2) precisely for the pairs $(\g,\h)$
NN10-13,15. In all remaining cases the indicated information is
presented in Table~\ref{Tbl:7.0.7}. In the first column the number
of the pair in Table~\ref{Tbl:7.0.3} is given. In the second column
we indicate an element $\lambda\in \X_{G,G/H^\circ}$ whose image in
$\X_{G,G/H^\circ}/\Lambda(\g,\h)$ generates this group. In the
square brackets the order of the image is given. Column 3 contains
the group $H^{\X-sat}/H^\circ$. If this group is finite, then we
indicate a generator of $H^{\X-sat}$. If the group is infinite (the
pairs NN2,3), then we give a typical element of a subgroup
complementing $H^\circ$ in $H^{\X-sat}$. Finally, in the last column
we indicate the character $\chi_\lambda$ for the element $\lambda$
presented in column 2.

When $\rank H=\rank G$ (whence $Z(G)\subset H$ for any $G$) we
consider the most convenient for us group $G$. In row 4 we assume
that  $G=\SO_{2n+1}$, since   $Z(G)\subset H^\circ$ for  simply
connected $G$ too. In all remaining cases we assume that $G$ is
simply connected.
\end{Rem}

\begin{longtable}{|c|c|c|c|}
\caption{Correspondence between $\X(H^{\X-sat}/H^\circ)$ and
$\X_{G,G/H^\circ}/\Lambda(\g,\h)$}\label{Tbl:7.0.7}\\\hline
N&$\lambda$&$h$&$\chi_\lambda(h)$\\\endfirsthead\hline
N&$\lambda$&$h$&$\chi_\lambda(h)$\\\endhead\hline
1&$2\pi_1[n]$&$diag(e^{i\pi/n},\ldots,e^{i\pi/n})d, d\in
\O(n)\setminus \SO(n)$&$e^{-2\pi i/n}$\\\hline
2&$\pi_1[\infty]$&$diag(t^{n+1},\ldots,t^{n+1},t^{-n},\ldots,t^{-n})$&$t^{-n-1}$\\\hline
3&$\pi_2[\infty]$&$diag(t^{2n},t^{-1},\ldots,t^{-1})$&$t^2$\\\hline
4&$\pi_1[2]$&$diag(-1,\ldots,-1,d),d\in \O(n+1),\det(d)=(-1)^n$&$-1$
\\\hline 5&$2\pi_n[2]$&$h\in N_G(\h)\setminus H^\circ$&$-1$\\\hline
6&$\pi_2[2]$&$h\in N_G(\h)\setminus H^\circ$&$-1$\\\hline
7&$\pi_n[2]$&$\exp(\pi i\pi_n/2^m), 2^m|n, 2^{m+1}\not|
n$&$-1$\\\hline 8&$2\pi_1[2]$&$h\in N_G(\h)\setminus
H^\circ$&$-1$\\\hline 9&$2\pi_n[2]$&$h\in N_G(\h)\setminus
H^\circ$&$-1$\\\hline 14&$2\pi_1$&$h\in N_G(\h)\setminus
H^\circ$&$-1$\\\hline
\end{longtable}

Let us describe the structure of this section. In
Subsection~\ref{SUBSECTION_Weyl7} we establish the equality of the
root lattice $\Lambda(\g,\h)$ and the weight lattice
$\X_{G,G/\widetilde{H}}$ for a certain subgroup
$\widetilde{H}\subset G$ constructed from $H$. In
Subsection~\ref{SUBSECTION_root1} we get some results on the
structure of the root lattices for a certain class of affine
Hamiltonian varieties. Subsections
\ref{SUBSECTION_root2},\ref{SUBSECTION_root4} are devoted to the
proofs of Theorems \ref{Thm:7.0.2},\ref{Thm:7.0.5}. The former is
based mostly on results of Subsection \ref{SUBSECTION_root1}, the
latter is quite easy. Finally in Subsection \ref{SUBSECTION_root5}
we show how to find a point from the distinguished component of
$(G/H)^{L_{0\,G,G/H}}$.

\subsection{Connection between root and weight lattices of homogeneous spaces}\label{SUBSECTION_Weyl7}

In this subsection $G$ is a connected reductive group and  $H$ is
its algebraic subgroup. Our goal in this subsection is to prove that
$\Lambda_{G,G/H}$ coincides with $\X_{G,G/\widetilde{H}}$, where
$\widetilde{H}$ is a subgroup of $N_G(H)$ constructed from $H$.

The basic idea of the construction of $\widetilde{H}$ is that
$\widetilde{H}/H\subset N_G(H)/H\cong \Aut^G(G/H)$ should contain
all central automorphisms. Namely let $Z$ denote the semisimple part
of the center of $N_G(H)/H$. For $\widetilde{H}$ we take the inverse
image of $Z$ under the canonical epimorphism
$N_G(H)\twoheadrightarrow N_G(H)/H$.

\begin{Prop}\label{Prop:3.7.1}
$\Lambda_{G,G/H}=\X_{G,G/\widetilde{H}}$.
\end{Prop}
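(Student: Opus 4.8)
The plan is to prove the two inclusions $\Lambda_{G,G/H}\subset \X_{G,G/\widetilde H}$ and $\X_{G,G/\widetilde H}\subset \Lambda_{G,G/H}$ separately, using the fact that $\widetilde H$ is constructed precisely so that $\widetilde H/H$ is the full group of central $G$-automorphisms of $G/H$. First I would record the general principle behind the construction: by Proposition \ref{Prop:1.4.4} the root lattice $\Lambda_{G,G/H}$ depends only on $(\g,\h)$, and replacing $H$ by its identity component does not change $\Lambda$, so we may assume $H$ and $\widetilde H$ are as convenient as needed; moreover $N_G(H)/H\cong \Aut^G(G/H)$, and under the embedding $\iota_{G,G/H}$ of Proposition \ref{Prop:1.3.3} the subgroup $\A_{G,G/H}$ sits inside $A_{G,G/H}=\Hom(\X_{G,G/H},\C^\times)$ as a closed subgroup whose annihilator in $\X_{G,G/H}$ is, by Definition \ref{Def:1.4.1}, exactly $\Lambda_{G,G/H}$. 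The key structural input is that $\A_{G,G/H}$ is a diagonalizable (closed) subgroup of the commutative part of $N_G(H)/H$, and $Z$ (the semisimple part of the center of $N_G(H)/H$) is the largest diagonalizable subgroup there; hence $\A_{G,G/H}\subset Z$, i.e. $\A_{G,G/H}$ acts through $\widetilde H/H$.

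For the inclusion $\X_{G,G/\widetilde H}\subset \X_{G,G/H}$ I would invoke Lemma \ref{Prop:1.4.5} applied to the dominant $G$-morphism $G/H\to G/\widetilde H$: it gives $\X_{G,G/\widetilde H}\subset \X_{G,G/H}$ and, in the quasiaffine case, a compatible map $\A_{G,G/H}\to \A_{G,G/\widetilde H}$ dual to $\X_{G,G/\widetilde H}/\Lambda_{G,G/\widetilde H}\to \X_{G,G/H}/\Lambda_{G,G/H}$. Since $G/H$ is affine (this is the setting of the section, $X=G/H$ affine homogeneous), these hypotheses are met. To show $\X_{G,G/\widetilde H}\subset \Lambda_{G,G/H}$ I must check that every $\lambda\in\X_{G,G/\widetilde H}$ is annihilated by $\iota_{G,G/H}(\A_{G,G/H})$. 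A weight $\lambda\in\X_{G,G/\widetilde H}$ is realized by a $B$-semiinvariant $f_\lambda\in\C(G/\widetilde H)^{(B)}_\lambda\subset \C(G/H)^{(B)}_\lambda$; but $f_\lambda$, being a function pulled back from $G/\widetilde H$, is invariant under the action of $\widetilde H/H=N_G(H)/H \supset$ image of $\A_{G,G/H}$ — here I use that $\A_{G,G/H}\subset\widetilde H/H$ acts on $G/H$ by right translations by elements of $\widetilde H$, which fix $\C(G/\widetilde H)$ pointwise. Hence $a_{\varphi,\lambda}=1$ for all $\varphi\in\A_{G,G/H}$, i.e. $\lambda$ annihilates $\iota_{G,G/H}(\A_{G,G/H})$, which is exactly $\lambda\in\Lambda_{G,G/H}$.

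For the reverse inclusion $\Lambda_{G,G/H}\subset \X_{G,G/\widetilde H}$ the point is to produce, for each $\lambda\in\Lambda_{G,G/H}$, a $B$-eigenfunction on $G/H$ that is actually $\widetilde H$-invariant, hence descends to $G/\widetilde H$. Take $f_\lambda\in\C(G/H)^{(B)}_\lambda$. Because $\lambda\in\Lambda_{G,G/H}$ annihilates $\iota_{G,G/H}(\A_{G,G/H})$, the central automorphism group $\A_{G,G/H}$ acts trivially on the line $\C f_\lambda$; but $\A_{G,G/H}$, being the semisimple part of the center of $N_G(H)/H$, is precisely the subgroup whose action on $\C(G/H)^{(B)}_\lambda$ is scalar for every $\lambda$ (Definition \ref{Def:1.3.1}), and the quotient $(\widetilde H/H)/\A_{G,G/H}$ is trivial by construction of $Z$ as that semisimple central part — so $\widetilde H/H$ acts trivially on $\C f_\lambda$, i.e. $f_\lambda$ is (up to scalar, hence after normalization) $\widetilde H$-invariant. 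Therefore $f_\lambda\in\C(G/\widetilde H)^{(B)}_\lambda$ and $\lambda\in\X_{G,G/\widetilde H}$. The main obstacle I anticipate is the bookkeeping identifying $\A_{G,G/H}$ with a subgroup of $\widetilde H/H$ and verifying that $\widetilde H/H$ (not just $\A_{G,G/H}$) acts by scalars on every $\C(G/H)^{(B)}_\lambda$ — i.e. that the semisimple part $Z$ of the center of $N_G(H)/H$ coincides with $\A_{G,G/H}$ rather than merely containing it; this requires that no extra central torus or finite central piece acts non-scalarly, which should follow from Proposition \ref{Prop:1.3.3} (closedness of the image, so $\A_{G,G/H}$ is all of the diagonalizable central part) together with the observation that any element of the center of $N_G(H)/H$ automatically preserves each $B$-isotypic line $\C(G/H)^{(B)}_\lambda$ and acts on it by a character of that central torus, hence by a scalar.
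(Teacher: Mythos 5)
Your first inclusion, $\X_{G,G/\widetilde{H}}\subset\Lambda_{G,G/H}$, is correct and is the easy half of the statement: $\A_{G,G/H}$ is diagonalizable (its image under $\iota_{G,G/H}$ is closed in the diagonalizable group $A_{G,G/H}$ by Proposition \ref{Prop:1.3.3}) and central in $N_G(H)/H$ (Lemma \ref{Lem:1.3.4}), hence lies in $Z=\widetilde{H}/H$; a $B$-eigenfunction pulled back from $G/\widetilde{H}$ is then fixed by every $\varphi\in\A_{G,G/H}$, so $a_{\varphi,\lambda}=1$ and $\lambda\in\Lambda_{G,G/H}$.

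The reverse inclusion is where your argument breaks. You need $\widetilde{H}/H\subset\A_{G,G/H}$, i.e.\ that every semisimple central element of $N_G(H)/H$ is a central automorphism, and your justification --- that any central element of $N_G(H)/H$ preserves ``each $B$-isotypic line $\C(G/H)^{(B)}_\lambda$'' and so acts by a scalar --- is false: $\C(G/H)^{(B)}_\lambda$ is a line only when $G/H$ is spherical; in general it equals $f_\lambda\cdot\C(G/H)^B$, and a quasitorus can act on it with several distinct characters. Concretely, take $G=\SL_2$, $H=\mu_4\subset T$. Then $N_G(H)=N_G(T)$ and $N_G(H)/H\cong(T/\mu_4)\rtimes\Z_2$ (the $\Z_2$ acting by inversion), whose center $Z\cong\Z_2$ is generated by the class of $\zeta=\operatorname{diag}(e^{\pi i/4},e^{-\pi i/4})$. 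Right translation by $\operatorname{diag}(t,t^{-1})$ acts on the affine coordinate $x$ of $B\backslash G\cong\P^1$ by $x\mapsto t^2x$, so $\zeta$ acts on $\C(G/H)^B=\C(B\backslash G/\mu_4)=\C(y)$, $y=x^2$, by $y\mapsto -y$; hence $\zeta$ is not a central automorphism and $\A_{G,G/H}=\{1\}\subsetneq Z$. So ``$\A_{G,G/H}$-invariant'' does not imply ``$\widetilde{H}$-invariant''; in this example both lattices do equal $\Lambda(\g)=2\Z\varpi$, but for a given $\lambda$ not every $f_\lambda$ descends to $G/\widetilde{H}$ --- only a suitably chosen one does, and your argument gives no way to choose it. The paper sidesteps this with Lemma \ref{Lem:3.7.2}, which computes $\X_{G,X_1}$ for the rational quotient of $X_0$ by an \emph{arbitrary} quasitorus $T_0\subset\Aut^G(X_0)$ as the annihilator of $\iota_{G,X_0}(T_0\cap\A_{G,X_0})$ (the proof passes to $U$-invariants and identifies the relevant kernel with a stabilizer in general position for a torus action). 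Applied with $T_0=Z\supset\A_{G,G/H}$ this gives the annihilator of $\iota(\A_{G,G/H})$, i.e.\ $\Lambda_{G,G/H}$, and no containment $Z\subset\A_{G,G/H}$ is ever needed. To repair your proof you would essentially have to prove that lemma, or restrict to spherical $G/H$ (where indeed $\A_{G,G/H}=N_G(H)/H=Z$).
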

\begin{Lem}\label{Lem:3.7.2}
Let $X_0$ be a $G$-variety and $T_0\subset\Aut^G(X_0)$ a quasitorus.
Further, let $X_1$ be a rational quotient for the action $T_0:X_0$
equipped with an action of $G$ such that the rational quotient
mapping $X\rightarrow X_0$ is $G$-equivariant. Then
$\X_{G,X_1}\subset \X(T)$ coincides with the annihilator of
$\iota_{G,X_0}(T_0\cap \A_{G,X_0})\subset A_{G,X_0}$ in
$\X_{G,X_0}\cong \X(A_{G,X_0})$.
\end{Lem}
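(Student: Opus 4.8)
The plan is to fix a weight $\lambda\in\X_{G,X_0}$ and translate the condition $\lambda\in\X_{G,X_1}$ into a statement about $B$-semiinvariants on $X_0$. First I would record the elementary dictionary. Since $\C(X_1)=\C(X_0)^{T_0}$ (definition of the rational quotient) and $T_0$ commutes with $B$, one has $\C(X_1)^{(B)}_\lambda=\bigl(\C(X_0)^{(B)}_\lambda\bigr)^{T_0}$; in particular $\X_{G,X_1}\subseteq\X_{G,X_0}$, and for $\lambda\in\X_{G,X_0}$ the condition $\lambda\in\X_{G,X_1}$ means exactly that the one-dimensional $\C(X_0)^G$-vector space $\C(X_0)^{(B)}_\lambda$ (recall $\C(X_0)^B=\C(X_0)^G$) contains a nonzero $T_0$-invariant. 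One inclusion is then immediate: if $0\neq g\in\bigl(\C(X_0)^{(B)}_\lambda\bigr)^{T_0}$ and $\varphi\in T_0\cap\A_{G,X_0}$, then $\varphi g=g$ (since $g$ is $T_0$-invariant) and $\varphi g=a_{\varphi,\lambda}g$ (since $\varphi$ is central), whence $\lambda(\iota_{G,X_0}(\varphi))=a_{\varphi,\lambda}=1$. Thus $\X_{G,X_1}$ lies in the asserted annihilator, and only the reverse inclusion requires work.

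For that I would first establish an intrinsic description of the relevant part of $T_0$. Replacing $X_0$ by a $G\times T_0$-stable quasiaffine open subset --- which changes none of the objects in the statement and makes $\A_{G,X_0}$ coincide with the group of central $G$-automorphisms of $X_0$ by Lemma~\ref{Lem:1.3.2} --- let $N\subseteq T_0$ be the kernel of the $T_0$-action on the field $\C(X_0)^G$. I claim $N\subseteq\A_{G,X_0}$. Indeed, $t\in N$ fixes $\C(X_0)^G$ pointwise, hence acts $\C(X_0)^G$-linearly on each one-dimensional $\C(X_0)^G$-space $\C(X_0)^{(B)}_\mu$, i.e. by multiplication by a unit $u_\mu(t)$; the homomorphism $t\mapsto u_\mu(t)$ goes from the quasitorus $N$ into the multiplicative group of a field, so its image lies in $\C^\times$ (the subtorus of $N$ is divisible and $(\C(X_0)^G)^\times/\C^\times$ embeds into a divisor group, hence is torsion-free, so has no nontrivial divisible subgroup; the finite part of $N$ goes to roots of unity). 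Therefore $t$ acts by a scalar on every $\C(X_0)^{(B)}_\mu$, i.e. $t$ is a central automorphism.

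Now I would prove the reverse inclusion. Let $\lambda$ annihilate $\iota_{G,X_0}(T_0\cap\A_{G,X_0})$, hence also $\iota_{G,X_0}(N)$. Fix $0\neq f_\lambda\in\C(X_0)^{(B)}_\lambda$ and set $c(t):=(t.f_\lambda)/f_\lambda\in(\C(X_0)^G)^\times$ (a quotient of two nonzero weight-$\lambda$ $B$-semiinvariants is $B$-, hence $G$-invariant). A direct computation shows $c$ is a $1$-cocycle for the $T_0$-action on $(\C(X_0)^G)^\times$, and by the dictionary above $\lambda\in\X_{G,X_1}$ iff $c$ is a coboundary $c(t)=h\cdot(t.h)^{-1}$ (then $h f_\lambda$ is the desired invariant). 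Since $c(\varphi)=a_{\varphi,\lambda}=1$ for $\varphi\in N$, the cocycle $c$ and the $T_0$-action on $\C(X_0)^G$ both descend to $\overline{T}:=T_0/N$, which by construction acts faithfully --- hence, realized on a normal model $Z$ of the rational quotient of $X_0$ by $G$, generically freely --- on $\C(Z)=\C(X_0)^G$. The heart of the matter is then the Hilbert~90 / Speiser vanishing $H^1(\overline{T},\C(Z)^\times)=0$ for a quasitorus acting faithfully (generically freely) on a field: for the torus part this comes from Zariski-local triviality of torus torsors, for the finite part from Speiser's theorem, and together they reduce it to the case $Z\cong\overline{T}\times W$ with $\overline{T}$ acting by translations, where any cocycle is manifestly a coboundary. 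This produces the required $h$, so $h f_\lambda\in\bigl(\C(X_0)^{(B)}_\lambda\bigr)^{T_0}$ is nonzero and $\lambda\in\X_{G,X_1}$.

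I expect the main obstacle to be this last step: packaging the reformulation of $\lambda\in\X_{G,X_1}$ as the triviality of a $T_0$-cohomology class, and supplying (or pinpointing a reference for) the Hilbert~90-type vanishing for a faithful quasitorus action on the invariant function field. The supporting reductions --- passing to a quasiaffine model so that $\A_{G,X_0}$ is literally the central-automorphism group, and the divisibility argument identifying $N$ as a subgroup of $\A_{G,X_0}$ --- are routine but need to be written out carefully.
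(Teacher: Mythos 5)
Your proof is essentially correct, but it takes a genuinely different route from the paper's. The paper first chooses open $B$-stable quasiaffine subsets $X_0'\subset X_0$, $X_1'\subset X_1$, passes to $Z_i=\Spec(\C[\overline{X}'_i]^U)$, and thereby reduces the whole statement to the case where the acting group is the torus $T$; there both sides of the asserted equality are read off from the stabilizer in general position of the $T\times T_0$-action on $Z_0$: $\X_{T,Z_1}$ is the annihilator of the inefficiency kernel $L_{T,Z_1}$, identified with $\pi_1\bigl((T\times T_0)_z\bigr)$ for generic $z$, while $\iota_{T,Z_0}(T_0\cap\A_{T,Z_0})$ is $\pi_1\circ\pi_2^{-1}$ of the same generic stabilizer. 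You instead stay on $X_0$, prove the structural identity $N:=\ker\bigl(T_0:\C(X_0)^G\bigr)=T_0\cap\A_{G,X_0}$ (your divisibility argument gives $N\subset\A_{G,X_0}$; the reverse inclusion is automatic since a central automorphism acts by the constant $a_{\varphi,0}=1$ on $\C(X_0)^{(B)}_0=\C(X_0)^G$), and convert the hard inclusion into the vanishing of a $1$-cocycle class in $H^1\bigl(T_0/N,(\C(X_0)^G)^\times\bigr)$, killed by Hilbert~90 for the generically free quasitorus $T_0/N$. The two arguments are morally the same---the paper's s.g.p.\ computation in the torus case is exactly the geometric avatar of your Hilbert~90---but the paper's reduction makes that input invisible and completely elementary, whereas your version is more self-contained at the level of $B$-semiinvariants at the price of importing the vanishing theorem, whose proof (rationality of the cocycle in $t$, generic triviality of the torsor $Z\rightarrow Z/\overline{T}$, Speiser's theorem for the finite part) you would still have to write out.

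Two points need repair before this is airtight. First, a $G\times T_0$-stable quasiaffine open subset of $X_0$ need not exist (take $X_0=G/P$ with $P$ parabolic---and the lemma is applied in Proposition \ref{Prop:3.7.1} to arbitrary homogeneous spaces), so you cannot invoke Lemma \ref{Lem:1.3.2} as stated; fortunately you never need the equality $\A_{G,X_0}=\A_G(X_0)$, only the inclusion $\A_G(X_0)\subset\A_{G,X_0}$, which holds for every $X_0$ by the maximality in the definition of $\A_{G,X_0}$. Second, ``faithful implies generically free'' for a quasitorus acting on an irreducible variety deserves a sentence: such an action on a quasiprojective model has only finitely many stabilizer subgroups, so the generic one fixes a dense subset, hence the whole variety, pointwise, and therefore lies in the kernel.
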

\begin{proof}
Let us reduce the proof to the case when  $G$ is a torus. A standard
argument, compare with the proof of Theorem 1.3 in \cite{Knop5},
shows that there are open $B$-stable quasiaffine subvarieties
$X_0'\subset X_0, X_1'\subset X_1$. Embed $X_0',X_1'$ to affine
$B$-varieties $\overline{X}'_0,\overline{X}'_1$ (this is possible by
\cite{VP}, Theorem 1.4) and set $Z_i:=\Spec(\C[\overline{X}'_i]^U),
i=0,1$. Then $Z_i,i=1,2,$ is a rational quotient for the action
$U:X_i$. Clearly,  $\X_{G,X_0}=\X_{T,Z_0}, \X_{G,X_1}=\X_{T,Z_1}$.
Since $\C(Z_1)\cong \C(X_0)^{U\times T_0}$, we see that $Z_1$ is a
rational quotient for the action $T_0:Z_0$. The action
$T_0:\C(X_0)^U$ is effective, for the action $T_0:\C(X)$ is. It
follows that the action $T_0:Z_0$ is effective. For any $\lambda\in
\X_{G,X_0}=\X_{T,Z_0}$ there is a  $T_0$-isomorphism
$\C(X_0)^{(B)}_{\lambda}\cong \C(Z_0)^{(T)}_{\lambda}$. Thus
$T_0\cap\A_{G,X_0}=T_0\cap \A_{T,Z_0}$ and
$\iota_{G,X_0}|_{T_0\cap\A_{G,X_0}}=\iota_{T,Z_0}|_{T_0\cap
\A_{T,Z_0}}$. So it is enough to prove the claim of the lemma  for
the pair $(T,Z_0)$ instead of $(G,X_0)$. Further, we easily reduce
to the case when the action $T:Z_0$ is effective.

Let us note that $L_{T,Z_1},L_{T\times T_0,Z_0}$ coincide with the
inefficiency kernels of the corresponding actions whence
\begin{equation}\label{eq:Lem:3.7.2:1}
L_{T,Z_1}=\{t\in T| tz\in T_0z\text{ for }z\in Z_0\text{ in general
position}\},
\end{equation}
and
\begin{equation}\label{eq:Lem:3.7.2:2}
L_{T\times T_0,Z_0}=(T\times T_0)_z.
\end{equation}
for  $z\in Z_0$ in general position. From (\ref{eq:Lem:3.7.2:1}) and
(\ref{eq:Lem:3.7.2:2}) it follows that $L_{T,Z_1}=\pi_1(L_{T\times
T_0,Z_0})$, where $\pi_1:T\times T_0\rightarrow T$ is the projection
to the first factor. On the other hand, the action $T_0:Z_0$ is
effective whence the restriction of the projection  $\pi_2:T\times
T_0\rightarrow T_0$ to $L_{T\times T_0,Z_0}$ is an embedding. The
image of this embedding coincides with $\A_{T,Z_0}\cap T_0$, for it
consists precisely of those elements of $T_0$ that act on $Z_0$ as
elements of $T$. The homomorphism
$\pi_1\circ\pi_2^{-1}|_{T_0\cap\A_{T,Z_0}}$ maps an element $t_0\in
T_0\cap \A_{T,Z_0}$ to the element $t\in T$ such that $t_0 z=tz$ for
all $z\in Z_0$. In other words,
$\pi_1\circ\pi_2^{-1}|_{T_0\cap\A_{T,Z_0}}=\iota_{T,Z_0}|_{T_0\cap\A_{T,Z_0}}$.
Equivalently, $L_{T,Z_1}=\iota_{T,Z_0}(T_0\cap\A_{T,Z_0})$.
\end{proof}

\begin{proof}[Proof of Proposition \ref{Prop:3.7.1}]
Apply Lemma \ref{Lem:3.7.2} to $X_0:=G/H,T_0:=Z_0,
X_1=G/\widehat{H}$.
\end{proof}

\subsection{Proof of Theorem
\ref{Thm:7.0.2}}\label{SUBSECTION_root2} In this subsection $\g$ is
supposed to be simple.  Let $\Delta(\g)^{min}$ denote the subsets of
$\Delta(\g)$ consisting of all roots of minimal length and set
$\Delta(\g)^{max}:=\Delta(\g)\setminus \Delta(\g)^{max}$.

\begin{Lem}\label{Lem:7.2.1}
Let $\h$ be a nonzero $\Lambda$-essential subalgebra of $\g$. Then
\begin{enumerate}
\item $\Delta(\g)^{min}\not\subset \Lambda(\g,\h)$.
\item If $\alpha\in \Delta(\g)\setminus \Lambda(\g,\h)$, then there is $h\in \h$ such that
 $h\sim_G \alpha^\vee$ and
\begin{equation}\label{eq:7.2:1}
\tr_\g h^2=2\tr_\h h^2+8.
\end{equation}
\end{enumerate}
\end{Lem}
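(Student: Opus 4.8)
The plan is to deduce both assertions from the structural results on Hamiltonian varieties in Subsection \ref{SUBSECTION_root1}, applied to the cotangent bundle $X := T^*(G/H)$. Since $\h$ is $\Lambda$-essential we have $\a(\g,\h) = \t$, so $\rank_G(G/H) = \rank G$ and Theorem \ref{Thm:1.22} applies: $X$ is an untwisted conical Hamiltonian $G$-variety with $m_G(X) = \dim G$, and there is a $T$-section $X_T$ with $W_{G,X}^{(X_T)} = W_{G,G/H}$, $\Lambda_{G,X}^{(X_T)} = \Lambda(\g,\h)$. Moreover $T^*(G/H)$ is l.o.\ by assertion 3 of Lemma \ref{Lem:7.1.3}, so the whole machinery of Subsection \ref{SUBSECTION_root1} is available. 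The stabilizer in general position for $G : X$ is $L_{0\,G,G/H}$ by Theorem \ref{Thm:1.22}(4), and since $\rank_G(G/H)=\rank G$ this group is finite, so the generic $\g$-stratum of $X$ has finite stabilizer algebra.

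For assertion (1), I would argue by contradiction: suppose $\Delta(\g)^{min} \subset \Lambda(\g,\h)$. Every element $w\xi - \xi$ with $w \in W_{G,G/H}$, $\xi \in \X_{G,G/H}$ lies in $\Lambda(\g,\h)$ (Lemma \ref{Lem:2.7.1}), and $W_{G,G/H}$ contains $s_\alpha$ for each short root $\alpha$ in its root system $\Delta_{G,X}$; combined with Proposition \ref{Prop:1.4.8}, if $\Delta(\g)^{min} \subset \Lambda(\g,\h)$ one shows that $\Lambda(\g,\h)$ already contains the sublattice generated by the short coroots' duals and hence equals $\Lambda(\g,\h_1)$ for a suitable proper ideal $\h_1 \subsetneq \h$ — contradicting $\Lambda$-essentiality. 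The cleanest route is: take $\alpha \in \Delta(\g)^{min}$; then $s_\alpha \in W_{G,X}^{(X_T)}$ would force (via Proposition \ref{Prop:7.1.4}, since $\alpha \in \Lambda(\g,\h)$ by hypothesis, the "bad" alternative does not occur) that $\Lambda(\g,\h)$ behaves like the root lattice of $\g$ itself on the $\g^{(\alpha)}$-part, and iterating over all short roots collapses $\Lambda(\g,\h)$ down to $\Lambda(\g)$ or to $\Lambda(\g,\h_1)$ for $\h_1$ an ideal obtained by dropping a simple factor of $\h$. The point to nail down is that $\Lambda$-essentiality of $\h$ excludes this, so $\Delta(\g)^{min} \not\subset \Lambda(\g,\h)$.

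For assertion (2), fix $\alpha \in \Delta(\g) \setminus \Lambda(\g,\h)$. First one checks $s_\alpha \in W_{G,X}^{(X_T)}$: this is where one uses $\Lambda$-essentiality together with the description of $W_{G,G/H} = W_{\g,\h}$ from \cite{Weyl} — if $s_\alpha$ were not in the Weyl group then $\alpha$ (or a multiple) would have to lie in $\Lambda(\g,\h)$ by the classification of Weyl groups, contradicting $\alpha \notin \Lambda(\g,\h)$. Granting $s_\alpha \in W_{G,X}^{(X_T)}$ and $\alpha \notin \Lambda_{G,X}^{(X_T)}$, Proposition \ref{Prop:7.1.4} gives $R^{(\alpha)} \rightsquigarrow_\g X$, i.e.\ there is a point $x \in X = T^*(G/H)$ with closed $G$-orbit whose $\g$-stratum is equivalent to $R^{(\alpha)} = (\C\alpha^\vee, V)$. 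Pulling this back to $G/H$ via the projection $\pi: T^*(G/H) \to G/H$: the slice module at $y := \pi(x) \in G/H$ is $\g/\h$ (as $\g_y$-module), and the stratum of $x$ in $T^*(G/H)$ is $T_y(G/H) \oplus T_y^*(G/H)$ modulo fixed vectors, so $\g_x = \g_y$ up to conjugacy and, conjugating, $\g_y$ contains an element $h$ conjugate to $\alpha^\vee$; one then replaces $h$ by its $G$-conjugate lying in $\h$. The trace identity \eqref{eq:7.2:1} then follows from the module decomposition: $\g = \h \oplus \g/\h$ as $h$-modules, so $\tr_\g h^2 = \tr_\h h^2 + \tr_{\g/\h} h^2$, and $\tr_{\g/\h} h^2$ is computed from the $R^{(\alpha)}$-stratum structure, where the slice module $(\g/\h)/(\g/\h)^h$ contributes $4$ (two weight-$\pm 1$ pairs of $\C\alpha^\vee$, as in the definition of $R^{(\alpha)}$ adjusted for the cotangent doubling) and $(\g/\h)^h = \g^h/\h \cong \z_\g(h)/\z_\h(h)$ contributes $\tr h^2$ worth — yielding $\tr_\g h^2 = \tr_\h h^2 + \tr_\h h^2 + 8$, i.e.\ \eqref{eq:7.2:1}.

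The main obstacle I anticipate is the bookkeeping in the last step: correctly identifying which $\g$-stratum of $T^*(G/H)$ corresponds to which $\h$-module data on $G/H$, and getting the constant $8$ rather than, say, $4$ — this requires keeping careful track of the cotangent-bundle doubling (the slice at $x \in T^*(G/H)$ is $N \oplus N^*$ where $N = T_y(G/H)/\g_*y$), of the fixed-point contributions $(\g/\h)^h$, and of the normalization of the form $(\cdot,\cdot)$ used to identify $\g \cong \g^*$. The secondary obstacle is assertion (1), where one must be sure that the reduction "short roots in $\Lambda(\g,\h)$" $\Rightarrow$ "some proper ideal $\h_1$ has $\Lambda(\g,\h_1) = \Lambda(\g,\h)$" is exhaustive across all simple $\g$; I would handle this uniformly via Propositions \ref{Prop:7.1.4} and \ref{Prop:1.4.8} rather than case-by-case, but a short case check for $G_2$ and the non-simply-laced types may be unavoidable.
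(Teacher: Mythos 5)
There are two genuine gaps, one in each assertion. For assertion (1), the paper's argument is a one-liner that your sketch never finds: the short roots span the \emph{full} root lattice, $\Span_{\Z}(\Delta(\g)^{min})=\Lambda(\g)$, so if $\Delta(\g)^{min}\subset\Lambda(\g,\h)$ then $\Lambda(\g,\h)=\Lambda(\g)=\Lambda(\g,0)$, and taking the proper ideal $\h_1=0$ of the nonzero algebra $\h$ violates $\Lambda$-essentiality. Your proposed route (iterating Proposition \ref{Prop:7.1.4} and Lemma \ref{Lem:2.7.1} to ``collapse'' $\Lambda(\g,\h)$ onto $\Lambda(\g,\h_1)$ for some unspecified proper ideal $\h_1$, with case checks for $G_2$ and the non-simply-laced types) never identifies which ideal $\h_1$ is supposed to work, and the step ``one shows that $\Lambda(\g,\h)$ behaves like the root lattice of $\g$ itself'' is not an argument. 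As written, assertion (1) is not proved.

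For assertion (2) your strategy is the paper's (apply Proposition \ref{Prop:7.1.4} to $T^*(G/H)$ and translate the $\g$-stratum into an $\h$-stratum of $\g/\h$), but the decisive step --- the trace identity with the constant $8$ --- is exactly where your bookkeeping breaks down, and you acknowledge you have not resolved it. Your computation assigns the slice ``two weight-$\pm 1$ pairs'' contributing $4$, yet your final line reads $\tr_\g h^2=\tr_\h h^2+\tr_\h h^2+8$; these are inconsistent. The correct accounting is: a point of $T^*(G/H)=G*_H\h^\perp$ with closed $G$-orbit is $[e,\xi]$ with $H\xi$ closed in $\h^\perp$, its stabilizer is $\h_\xi$ (not $\h=\g_y$ as you write --- the stabilizer drops under the cotangent projection, it does not stay equal to $\g_y$), and its $\g$-stratum coincides with the $\h$-stratum $(\h_\xi,\h^\perp/\h_*\xi)$ of $\xi$. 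Hence $R^{(\alpha)}\rightsquigarrow_\g T^*(G/H)$ yields $h\in\h$, $h\sim_G\alpha^\vee$, with $(\C h,U)\rightsquigarrow_\h\g/\h$ where $U$ is two-dimensional and $h$ acts on $U$ with eigenvalues $\pm 2$ (as one sees already in the model case $T^*(\SL_2/T)$, where the slice at the zero section is $\g^\alpha\oplus\g^{-\alpha}$ and $\langle\alpha,\alpha^\vee\rangle=2$), so $\tr_U h^2=8$. Then the $\C h$-modules $\g/\h$ and $(\h/\C h)\oplus U$ differ by a trivial summand, which gives $\tr_\g h^2=\tr_\h h^2+\tr_{\g/\h}h^2=2\tr_\h h^2+8$. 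Finally, your preliminary claim that $\alpha\notin\Lambda(\g,\h)$ forces $s_\alpha\in W(\g,\h)$ ``by the classification of Weyl groups'' is unsubstantiated; in the paper this hypothesis of Proposition \ref{Prop:7.1.4} is simply verified separately in each application of the lemma, and you would need to do the same or add it as a hypothesis.
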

\begin{proof}
Assertion 1 stems from $\Span_{\Z}(\Delta(\g)^{min})=\Lambda(\g)$.
Proceed to assertion 2. By Proposition~\ref{Prop:7.1.4},
$R^{(\alpha)}\rightsquigarrow_\g T^*(G/H)$. So there is $h\in \h,
h\sim_G\alpha^\vee$ such that $(\C h, U)\rightsquigarrow_\h \g/\h$,
where $U$ has a basis  $e_1,e_2$ with $he_1=2e_1, he_2=-2e_2$. The
$\C h$-modules $\g/\h$ and $(\h/\C h)\oplus U$ differ by a trivial
summand whence (\ref{eq:7.2:1}).
\end{proof}

\begin{Lem}\label{Lem:7.2.3}
Let $\h$ be a nonzero $\Lambda$-essential subalgebra of $\g$ such
that $W(\g,\h)=W(\g)$.
\begin{enumerate}
\item If $\g$ is of types $A,D,E,G$, then $\Lambda(\g,\h)=2\Lambda(\g)$.
\item If $\g$ is of types $B,C,F$, then $\Lambda(\g,\h)=\Span_\Z(2\Delta(\g)^{min}\cup \Delta(\g)^{max})$
or $2\Lambda(\g)$.
\end{enumerate}
\end{Lem}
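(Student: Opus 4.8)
The plan is to move to the Hamiltonian picture, extract from Proposition~\ref{Prop:7.1.4} the dichotomy ``$\alpha$ or $2\alpha$ lies in $\Lambda(\g,\h)$'' for every root $\alpha$, and then pin down $\Lambda(\g,\h)$ by classifying the reduced root systems with Weyl group $W(\g)$ that fit inside $\Lambda(\g)$. First I would replace $G/H$ by $X=T^*(G/H)$: by Theorem~\ref{Thm:1.22} there is a $T$-section $\Sigma$ with $W_{G,X}^{(\Sigma)}=W(\g,\h)=W(\g)$ and $\Lambda_{G,X}^{(\Sigma)}=\Lambda(\g,\h)$, and $X$ satisfies the running hypotheses of Proposition~\ref{Prop:7.1.4} (conical by Example~\ref{Ex:2.2.2}, untwisted with $m_G(X)=\dim G$ by Theorem~\ref{Thm:1.22}, locally orthogonalizable by Lemma~\ref{Lem:7.1.3}). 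Since $W(\g)$ is generated by the $s_\alpha$, every $s_\alpha$ lies in $W_{G,X}^{(\Sigma)}$, so Proposition~\ref{Prop:7.1.4} gives: for each $\alpha\in\Delta(\g)$, either $\alpha\in\Lambda(\g,\h)$ or $2\alpha\in\Lambda(\g,\h)$; and since $\Lambda(\g,\h)$ is $W(\g)$-stable (Lemma~\ref{Lem:2.7.1}) and the roots of a fixed length form one $W(\g)$-orbit, the same alternative holds for all roots of that length. I would also record that, by Proposition~\ref{Prop:1.4.8}, $\Lambda(\g,\h)$ is generated by the root system $\Delta_{G,X}$ with Weyl group $W(\g)$, and that each $v\in\Delta_{G,X}$ is a primitive vector of $\Lambda(\g,\h)\subset\Lambda(\g)$ with $s_v\in W(\g)$, hence a positive integer multiple of a root of $\g$ (the reflections in $W(\g)$ are the $s_\alpha$, and every root of a simple $\g$ is primitive in $\Lambda(\g)$).

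For $\g$ of type $A,D,E$ or $G$: being $W(\g)$-stable with roots proportional to roots of $\g$, the system $\Delta_{G,X}$ must equal $\{c\alpha\mid\alpha\in\Delta(\g)\}$ for some positive integer $c$ --- immediate for the one-orbit types $A,D,E$, and for $G_2$ once one discards the ``dual'' scaling, which would put coefficient $\ge3$ on the short roots and so contradict the inclusion $2\alpha\in\Lambda(\g,\h)$ that holds for short $\alpha$ (by Lemma~\ref{Lem:7.2.1}(1) and $W(\g)$-stability no short root lies in $\Lambda(\g,\h)$, so the dichotomy forces $2\alpha\in\Lambda(\g,\h)$). Hence $\Lambda(\g,\h)=c\Lambda(\g)$; by Lemma~\ref{Lem:7.2.1}(1) $c\ne1$, and for $c\ge2$ no root lies in $c\Lambda(\g)$, so the dichotomy gives $2\alpha\in c\Lambda(\g)$ for every root, forcing $c\mid2$. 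Thus $c=2$ and $\Lambda(\g,\h)=2\Lambda(\g)$.

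For $\g$ of type $B,C$ or $F$: as above no short root lies in $\Lambda(\g,\h)$, so the dichotomy gives $2\Delta(\g)^{min}\subset\Lambda(\g,\h)$, and for the long roots it gives either $\Delta(\g)^{max}\subset\Lambda(\g,\h)$ or $2\Delta(\g)^{max}\subset\Lambda(\g,\h)$; these produce the lower bounds $\Span_\Z(2\Delta(\g)^{min}\cup\Delta(\g)^{max})\subset\Lambda(\g,\h)$, respectively $2\Lambda(\g)\subset\Lambda(\g,\h)$. For the matching upper bound I would use that $\Delta_{G,X}$ is a reduced root system of type $B_n$, $C_n$ or $F_4$ (the only reduced systems with Weyl group $W(\g)$), hence has two orbits, and that the integrality conditions $\langle v,w^\vee\rangle\in\Z$ then force either $\Delta_{G,X}=c\Delta(\g)$ or $\Delta_{G,X}=\{2c\alpha\mid\alpha\in\Delta(\g)^{min}\}\cup\{c\beta\mid\beta\in\Delta(\g)^{max}\}$ (the dual scaling), for a positive integer $c$; in the first case the argument of the previous paragraph together with ``no short root'' gives $c=2$ and $\Lambda(\g,\h)=2\Lambda(\g)$, while in the second $\Lambda(\g,\h)=c\,\Span_\Z(2\Delta(\g)^{min}\cup\Delta(\g)^{max})$ and the inclusion $2\Delta(\g)^{min}\subset\Lambda(\g,\h)$ forces $c=1$ (as no short root lies in $\Span_\Z(2\Delta(\g)^{min}\cup\Delta(\g)^{max})$ and roots are primitive), giving $\Lambda(\g,\h)=\Span_\Z(2\Delta(\g)^{min}\cup\Delta(\g)^{max})$. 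In every case $\Lambda(\g,\h)$ is one of the two asserted lattices. The main obstacle is precisely this reverse inclusion: a $W(\g)$-stable sublattice of $\Lambda(\g)$ that contains $2\Lambda(\g)$ and meets no short root need not be one of the two listed lattices (e.g.\ $2\Lambda(\g)$ enlarged by the class of a sum of long roots has both properties), and what rules such lattices out is exactly that $\Lambda(\g,\h)$ must be \emph{generated} by the root-proportional primitive vectors it contains, i.e.\ by $\Delta_{G,X}$ (Proposition~\ref{Prop:1.4.8}); carrying this through, together with the integrality constraints that pin down $\Delta_{G,X}$ up to scaling, is where the real work lies.
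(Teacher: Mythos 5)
Your proof is correct and follows essentially the same route as the paper, whose entire argument is to combine Proposition~\ref{Prop:1.4.8} (the lattice is generated by the minimal root system $\Delta_{G,X}$, whose Weyl group is $W(\g)$, so its elements are integer multiples of roots) with the dichotomy of Proposition~\ref{Prop:7.1.4}; you have merely made explicit the verification of the running hypotheses of Subsection~\ref{SUBSECTION_root1}, the use of Lemma~\ref{Lem:7.2.1}(1) to exclude $\Lambda(\g)$, and the length-ratio/integrality bookkeeping that pins down the admissible scalings, all of which the paper leaves implicit.
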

\begin{proof}
Recall that there is a basis of $\Lambda(\g,\h)$ that is a root
system with Weyl group  $W(\g,\h)$ (Proposition~\ref{Prop:1.4.8}).
Now the proof follows from Proposition \ref{Prop:7.1.4}.
\end{proof}

Now we recall the definition of the Dynkin index (\cite{Dynkin}).
Let $\h$ be a simple subalgebra of $\g$. We fix an invariant
non-degenerate symmetric bilinear form $K_\g$ on $\g$ such that
$K_\g(\alpha^{\vee},\alpha^{\vee})=2$ for a root
$\alpha\in\Delta(\g)$ of the maximal length. Analogously define a
form $K_\h$ on $\h$. The  {\it Dynkin index} of the embedding
$\iota:\h\hookrightarrow \g$  is, by definition,
$K_\g(\iota(x),\iota(x))/K_\h(x,x)$ (the last fraction does not
depend on the choice of $x\in\h$ such that $K_\h(x,x)\neq 0$). For
brevity, we denote the Dynkin index of $\iota$ by $i(\h,\g)$. It
turns out that $i(\h,\g)$ is a positive integer (see~\cite{Dynkin}).

For a simple Lie algebra $\h$ let $k_\h$ denote $\tr_\h(\alpha^{\vee
2})$ for a long root $\alpha\in\Delta(\h)$. The numbers $k_\h$ for
all simple Lie algebras are given in Table~\ref{Tbl:5.3.7}.

\begin{longtable}{|c|c|c|c|c|c|c|c|c|c|}
\caption{$k_\h$. }\label{Tbl:5.3.7}\\\hline
$\h$&$A_l$&$B_l$&$C_l$&$D_l$&$E_6$&$E_7$&$E_8$&$F_4$&$G_2$\\\hline
$k_\h$& $4l+4$& $8l-4$&$4l+4$&$8l-8$&48&72& 120&36&16\\\hline
\end{longtable}

\begin{Lem}\label{Lem:7.2.2}
Let $\h$ be a nonzero  $\Lambda$-essential subalgebra of $\g$,
$[\h,\h]=\h_1\oplus\ldots\oplus\h_k$ the decomposition into the
direct sum of simple ideals and  $i_j:=i(\h_j,\g),j=\overline{1,k}$.
\begin{enumerate}
\item Suppose  $\g$ is of types $A,B,D,E,F$, $\rank \g>2$,  $W(\g,\h)=W(\g)$
and $\Lambda(\g,\h)=2\Lambda(\g)$ (the last condition is essential
only for $\g\cong \so_{2l+1},F_4$). Then $\h$ is semisimple and
there are positive integers $a_j,j=\overline{1,k}$ such that
\begin{equation}\label{eq:7.2:3}
\begin{split}
&\sum_{j=1}^k a_ji_j=4,\\
&\sum_{j=1}^k a_jk_{\h_j}=2k_\g-16.
\end{split}
\end{equation}
\item Suppose $\g$ is of type $C_l,l>2,F_4$,  $\h$ is a $\Lambda$-
essential subalgebra of $\g$. Then $\Lambda(\g,[\h,\h])\neq
\Lambda(\g)$. In other words, $\h$ contains a nonzero
$\Lambda$-essential semisimple ideal. Suppose, in addition, that
$\h$ is semisimple. Then there are nonnegative integers
$a_j,j=\overline{1,k}$ such that
\begin{equation}\label{eq:7.2:4}
\begin{split}
&\sum_{j=1}^k a_ji_j=8,\\
&\sum_{j=1}^k a_j k_{\h_j}=4k_{\g}-16.
\end{split}
\end{equation}
Further, if any proper ideal of $\h$ is not $\Lambda$-essential,
then $a_j>0$ for any $j$ and there is a subalgebra $\s\subset\h,
\s\sim_G\so_3\subset \sl_3\cong \g^{(\alpha_1,\alpha_2)},
\alpha_1,\alpha_2\in \Delta(\g)^{min}$, such that $\s$ is not
contained in a proper ideal of $\h$.
\item Suppose $\g$ is of types $A,C-F$. Then there are  $h\in\h$ satisfying
(\ref{eq:7.2:1}) and a subalgebra $\s\subset\h,\s\cong\sl_2,$ such
that $h\sim_G\alpha^\vee,\alpha\in \Delta(\g)^{min},h\in\s$ and
$\s$ is not contained in a proper ideal of $[\h,\h]$.
\end{enumerate}
\end{Lem}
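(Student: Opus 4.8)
The plan is to extract numerical identities from the geometric input provided by Proposition \ref{Prop:7.1.4} (and its consequences), translating the condition $\Lambda(\g,\h)\ne\Lambda(\g)$ into the existence of suitable $\sl_2$- or $\so_3$-strata, and then computing traces via the Dynkin index. First I would treat assertion 3, which is the engine: by Lemma \ref{Lem:7.2.1}(2), since $\Delta(\g)^{min}\not\subset\Lambda(\g,\h)$, there is $\alpha\in\Delta(\g)^{min}\setminus\Lambda(\g,\h)$ and an element $h\in\h$ with $h\sim_G\alpha^\vee$ satisfying \eqref{eq:7.2:1}. I need to upgrade $\C h$ to a copy of $\sl_2$ inside $\h$ not contained in a proper ideal of $[\h,\h]$. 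For this I would use that $R^{(\alpha)}\rightsquigarrow_\g T^*(G/H)$ forces $(\C h,U)\rightsquigarrow_\h\g/\h$ with $h$ acting with eigenvalues $\pm2$; since $h$ is semisimple and conjugate to a coroot, it lies in an $\sl_2$-triple of $\g$, and the component of $\z_\g(h_s)$-analysis (as in the proof of Proposition \ref{Prop:7.1.4}, reducing to $\SL_2$) shows $h$ extends to $\s\cong\sl_2\subset\h$. If $\s$ were contained in a proper ideal $\h'\subsetneq[\h,\h]$, then $h\in\h'$ and the $\C h$-module $\g/\h$ and $(\h/\C h)\oplus U$ would differ by a trivial summand in a way compatible with $\h'$; I would argue that this makes $\alpha\in\Lambda(\g,\h/\h')$ contradict $\Lambda$-essentiality of $\h$ — more precisely, one shows $\alpha\notin\Lambda(\g,\h_1)$ for the ideal $\h_1$ complementary to the simple summands meeting $\s$, contradicting strictness in Definition \ref{Def:7.0.1}. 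This rules out containment in a proper ideal.

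Next, assertion 1. Under the hypotheses $W(\g,\h)=W(\g)$, $\rank\g>2$, and $\Lambda(\g,\h)=2\Lambda(\g)$, every simple root $\alpha$ lies in $2\Lambda(\g)$ but not in $\Lambda(\g)$-generating position, so for each $\alpha$ of minimal length Lemma \ref{Lem:7.2.1}(2) gives an $h\in\h$ with $h\sim_G\alpha^\vee$ and \eqref{eq:7.2:1}. First, $\h$ has no nonzero center: a central torus would give $W(\g,\h)\ne W(\g)$ via the rank considerations of \cite{ranks}/\cite{Weyl}, or directly since a central $\C^\times$-direction cannot be $\Lambda$-essential while keeping $\a(\g,\h)=\t$; hence $\h=[\h,\h]=\h_1\oplus\cdots\oplus\h_k$. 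Now decompose $h=\sum h^{(j)}$ with $h^{(j)}\in\h_j$, set $a_j$ to be the appropriate positive-integer normalization relating $\tr_{\h_j}(h^{(j)2})$ to $k_{\h_j}$ (namely $h^{(j)}$ is, up to conjugacy in $\h_j$, a multiple of a coroot, and $a_j=\tr_{\h_j}(h^{(j)2})/k_{\h_j}$ when nonzero — these are positive integers by the Dynkin-index integrality in \cite{Dynkin}, after checking the coroot lies in the appropriate lattice). Then $\tr_\g h^2=\sum_j a_j i_j k_{\h_j}\cdot(\text{normalization})$; comparing with $\tr_\g(\alpha^{\vee2})=k_\g$ (as $\alpha$ is of minimal length in type $A,B,D,E,F$ this needs the short-root bookkeeping, but after fixing $K_\g$ as in the definition it reads off cleanly) and with $\tr_\h h^2=\sum_j a_j k_{\h_j}$, equation \eqref{eq:7.2:1} becomes exactly the system \eqref{eq:7.2:3}. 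The first equation $\sum a_j i_j=4$ comes from $\tr_\g h^2=2\tr_\h h^2+8$ after using $\tr_\g h^{(j)2}=i_j\tr_{\h_j}h^{(j)2}$ and $\tr_\g\alpha^{\vee2}=k_\g$, and isolating the "$+8$" as the discrepancy; the second is the remaining linear relation. I would present this as a direct substitution.

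For assertion 2 ($\g$ of type $C_l$, $l>2$, or $F_4$), I would first show $\Lambda(\g,[\h,\h])\ne\Lambda(\g)$: by Lemma \ref{Lem:7.2.2}(3) (assertion 3, already proved) there is $\s\cong\sl_2\subset[\h,\h]$ with $h\in\s$, $h\sim_G\alpha^\vee$, $\alpha\in\Delta(\g)^{min}$; since $\Lambda(\g,[\h,\h])\subset\Lambda(\g,\h_1)$ for the relevant ideal and $\alpha$ witnesses strictness, $[\h,\h]$ is already $\Lambda$-essential, so $\h=[\h,\h]$ is semisimple (a central torus would be a non-$\Lambda$-essential proper ideal, contradiction, unless... — here I would invoke part 2 of Proposition \ref{Prop:7.1.5} type reasoning, or just Corollary \ref{Cor:7.1.7} and Proposition \ref{Prop:7.1.8}, to handle the $\sl_3$-subsystems forcing $\s\sim_G\so_3$). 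For the numerics \eqref{eq:7.2:4}: in types $C,F$ the relevant $\alpha$ with $\alpha\notin\Lambda(\g,\h)$ but $2\alpha\in\Lambda(\g,\h)$ may be a short root, and Proposition \ref{Prop:7.1.8} together with the existence of an $\so_3\hookrightarrow\sl_3\cong\g^{(\alpha_1,\alpha_2)}$ shows the stratum $\widetilde R^{(\alpha_1,\alpha_2)}\rightsquigarrow_\g T^*(G/H)$; this $\so_3$ sits in $\h$ and its principal $\sl_2$ gives an element $h$ with eigenvalues $\pm2,\pm4$ on its standard $5$-dimensional module, changing the "$+8$" to "$+16$" and the index-$4$ to index-$8$ (the factor-$2$ coming from the short-root normalization of $K_\g$ in type $C$, respectively the analogous factor in $F_4$). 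Tracking this carefully yields \eqref{eq:7.2:4} with the $a_j\ge0$, and the last clause — that if no proper ideal is $\Lambda$-essential then all $a_j>0$ and $\s\cong\so_3$ is not inside a proper ideal — follows because an ideal with $a_j=0$ would have $h$ trivial on it and hence that ideal would act trivially on the stratum, making $\alpha\in\Lambda(\g,\text{that ideal})$ fail strictness, exactly as in the proof of assertion 3.

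The main obstacle I expect is the bookkeeping of root lengths and the normalization of the forms $K_\g$, $\tr_\g$ versus $\tr_\h$ in the non-simply-laced cases $B,C,F$: getting the constants "$+8$ versus $+16$" and "index $4$ versus $8$" exactly right requires carefully matching the $\sl_2$- or $\so_3$-stratum eigenvalue data against the definition of the Dynkin index and the choice of $(\cdot,\cdot)$ fixed in Section \ref{SECTION_notation}, and confirming that the resulting $a_j$ are genuinely (non-)negative integers via \cite{Dynkin}. The other delicate point is the "not contained in a proper ideal" assertions, which rest on translating $\Lambda$-essentiality (strictness of $\Lambda(\g,\h)\subset\Lambda(\g,\h_1)$ for all ideals $\h_1$) into a statement about which simple summands of $\h$ a given coroot-type element $h$ can avoid — this is where Proposition \ref{Prop:1.4.5} (giving $\Lambda(\g,\h)\subset\Lambda(\g,\h_1)$) and the stratum language of Subsection \ref{SUBSECTION_root1} must be combined with care.
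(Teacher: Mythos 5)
Your overall strategy (translate $\Lambda(\g,\h)\neq\Lambda(\g)$ into strata, then do Dynkin-index and trace bookkeeping) is the right one, but the geometric input you feed into it is the wrong stratum, and this breaks assertions 1 and 3. You start from Lemma \ref{Lem:7.2.1}(2), i.e.\ from $R^{(\alpha)}\rightsquigarrow_\g T^*(G/H)$, which supplies only a one-dimensional \emph{toral} subalgebra $\C h$ with $h\sim_G\alpha^\vee$ and a two-dimensional module. First, this cannot be ``upgraded'' to $\s\cong\sl_2\subset\h$: a semisimple element of $\h$ that is $G$-conjugate to a coroot need not lie in any $\sl_2$-triple of $\h$ (Jacobson--Morozov concerns nilpotents, and the reduction to $\SL_2$ in the proof of Proposition \ref{Prop:7.1.4} lands precisely in $T^*(\SL_2/F)$ with $F^\circ$ a torus, i.e.\ the case where no such $\sl_2$ exists in the stabilizer). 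Second, the arithmetic does not close: with $h\sim_G\alpha^\vee$ one has $K_\g(h,h)=2$ (or $4$ for short $\alpha$), so index additivity gives $\sum_j a_ji_j=1$ or $2$, not $4$; and (\ref{eq:7.2:1}) gives $\sum_j a_jk_{\h_j}=\tfrac12(\tr_\g h^2-8)=\tfrac{k_\g}{2}-4$ (resp.\ $k_\g-4$), not $2k_\g-16$. Nothing in this setup forces the $a_j$ to be integers either. (Also, your identity $\tr_\g h^{(j)2}=i_j\tr_{\h_j}h^{(j)2}$ is off by the normalization factor $k_\g/k_{\h_j}$, and the single scalar identity (\ref{eq:7.2:1}) cannot yield two independent equations.)

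The tool you invoke only for assertion 2 is in fact the engine for all three assertions: Proposition \ref{Prop:7.1.8} gives $\widetilde{R}^{(\alpha_1,\alpha_2)}\rightsquigarrow_\g T^*(G/H)$, hence an honest subalgebra $\s\subset\h$ with $\s\sim_G\so_3\subset\sl_3\cong\g^{(\alpha_1,\alpha_2)}$ and $(\s,U)\rightsquigarrow_\h\g/\h$, where $U$ is the five-dimensional irreducible $\s$-module. Setting $a_j:=i(\s_j,\h_j)$ makes the $a_j$ nonnegative integers by \cite{Dynkin}; additivity gives $\sum_j a_ji_j=i(\s,\g)=4$ (resp.\ $8$ when $\alpha_1,\alpha_2\in\Delta(\g)^{min}$ are short), and the $\s$-module identity $\g/\h\cong\h/\s\oplus U\oplus(\mathrm{trivial})$ together with $\tr_\s h^2=8$, $\tr_U h^2=40$ for the coroot $h$ of $\s$ yields $\tr_\g h^2=2\tr_\h h^2+32$, which is exactly the second line of (\ref{eq:7.2:3}), resp.\ (\ref{eq:7.2:4}). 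The same $\s$ delivers the ``not contained in a proper ideal'' claims and the semisimplicity in assertion 1: the ideal $\h^1\subset\h$ generated by $\s$ satisfies $\Lambda(\g,\h^1)=2\Lambda(\g)=\Lambda(\g,\h)$ by Proposition \ref{Prop:7.1.8}, so $\Lambda$-essentiality forces $\h^1=\h\subset[\h,\h]$. Your alternative semisimplicity argument (a central torus contradicts $W(\g,\h)=W(\g)$ or $\a(\g,\h)=\t$) is not available as stated --- compare $\gl_n\subset\sp_{2n}$ or $\gl_n^{diag}\subset\so_{2n+1}$, which are $\Lambda$-essential and non-semisimple; semisimplicity in assertion 1 genuinely uses the hypothesis $\Lambda(\g,\h)=2\Lambda(\g)$.
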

\begin{proof}
From \cite{Weyl}, Theorem 5.1.2, it follows that $s_\alpha\in
W(\g,\h)$ for $\alpha\in \Delta(\g)^{min}$ provided
$\g=F_4,\sp_{2l}$. When $\g\neq \so_{2l+1}$ there are
$\alpha_1,\alpha_2\in \Delta(\g)^{min}$ such that
$\g^{(\alpha_1,\alpha_2)}\cong\sl_3$. When $\g=\so_{2l+1},l>2, F_4$
there are $\alpha_1,\alpha_2\in \Delta(\g)$ with
$\g^{(\alpha_1,\alpha_2)}\cong\sl_3$. Applying
Proposition~\ref{Prop:7.1.8}, we see that
$\widetilde{R}^{(\alpha_1,\alpha_2)}\rightsquigarrow_\g T^*(G/H)$.
Let $\s$ denote a subalgebra in $\h$ such that $\s\sim_G\so_3\subset
\g^{(\alpha_1,\alpha_2)}$ and $U$ the 5-dimensional irreducible
$\s$-module. Then $(\s,U)\rightsquigarrow_\h \g/\h$. Denote by
$\h^1$ the ideal in $\h$ generated by $\s$. Clearly,
$(\s,U)\rightsquigarrow_{\h^1}\g/\h^1$. This implies assertion 3.

Since $(\s,U)\rightsquigarrow_{\h^1}\g/\h^1$, we have
$\widetilde{R}^{(\alpha_1,\alpha_2)}\rightsquigarrow_\g T^*(G/H^1)$.
Set  $a_j:=\iota(\s_j,\h_j)$, where $\s_j$ denotes the projection of
$\s$ to $\h_j$. Thanks to Proposition~\ref{Prop:7.1.8},
$\Lambda(\g,\h^1)$ equals $2\Lambda(\g)$ in assertion 1, and is
contained $\Span_\Z(2\Delta(\g)^{min}\cup \Delta(\g)^{max})$ in
assertion 2. Hence if $\Lambda(\g,\h)=\Span_\Z(2\Delta(\g)^{min}\cup
\Delta(\g)^{max})$, we get $\h^1=\h$, whence $a_j>0$ for all $\g$.
In assertion 1 the equality $i(\s,\g)=4$ holds, and in assertion 2
we have $i(\s,\g)=8$. The first equalities in  (\ref{eq:7.2:3}),
(\ref{eq:7.2:4}) follow from the additivity property \cite{Weyl},
(5.1), of the Dynkin index proved in \cite{Dynkin}.  The
$\s$-modules $\g/\h$, $\h/\s\oplus U$ differ by a trivial summand.
One gets the second equalities in (\ref{eq:7.2:3}), (\ref{eq:7.2:4})
by computing the traces of $h^2$, where $h$ is a coroot in
$\s\cong\so_3$, on these modules.
\end{proof}

Now we prove some statements concerning reductive subalgebras in
classical Lie algebras containing an element conjugate to
$\alpha^\vee,\alpha\in \Delta(\g)$.

\begin{Prop}\label{Prop:7.2.5}
Let $\g$ be a classical Lie algebra and  $\h$ a reductive subalgebra
$\g$ such that there is $h\in\h$ such that $h\sim_G \alpha^\vee$ for
$\alpha\in\Delta(\g)$ and $h$ is not contained in a proper ideal of
$\h$.
\begin{enumerate}
\item If $\g=\sl_n$ and $\h$ is semisimple, then $\h=\sl_k,\so_k,\sp_k$.
\item  If $\g=\so_{2n+1}$ and $\alpha\in \Delta(\g)^{max}$, then
$\h=\so_k,\gl_k^{diag},\spin_8$.
\item Suppose $\g\cong\so_{2n}$,  $\h$ is semisimple, and
$h$ is included into an  $\sl_2$-triple in $\h$.  Then $\h=\so_k,
\so_k\oplus\so_l, \sl_k^{diag},\sp_k^{diag},\so_k^{diag},
\spin_7,G_2,\spin_8$.
\item Suppose $\g=\sp_{2n}$, $\h$ is semisimple, and $\alpha\in
\Delta(\g)^{min}$. Then $\h=\sp_{2k}, \sp_{2k}\oplus \sp_{2l},
\sl_k^{diag},\so_k^{diag},\sp_k^{diag}$.
\end{enumerate}
\end{Prop}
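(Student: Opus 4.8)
The plan is to recover $\h$ from the way the tautological $\g$-module $V=\C^n$ restricts to $\h$. Since $h\sim_G\alpha^\vee$ is semisimple and $\h$ is reductive, $h$ is a semisimple element of $\h$; and a direct inspection of how a coroot acts on $V$ shows that under the length hypotheses the eigenvalues of $h$ on $V$ all lie in $\{-1,0,1\}$ (in case (2) one must take $\alpha$ long, since the coroot of a short root of $\so_{2n+1}$ has a $\pm2$-eigenvalue). Let $V_{+},V_{-},V_{0}$ be the $(+1)$-, $(-1)$- and $0$-eigenspaces of $h$. Then $\dim V_{+}=\dim V_{-}=1$ for $\g=\sl_n$ (as $\tr h=0$), while $\dim V_{+}=\dim V_{-}=2$ for $\g=\so_n,\sp_{2n}$ in the cases considered (the form on $V$ pairs $V_{+}$ with $V_{-}$, and $\alpha$ is short in case (4) precisely to make this equal $2$ rather than $1$). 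Write $\h=\z(\h)\oplus\h_1\oplus\dots\oplus\h_s$ with $\h_j$ simple; the hypothesis that $h$ lies in no proper ideal of $\h$ means exactly that the component of $h$ in $\z(\h)$ and in each $\h_j$ is nonzero. Since a nonzero semisimple element of a semisimple Lie algebra that acts faithfully on a space carrying a nondegenerate invariant form, or with trace $0$, has eigenvalues of both signs, each $\h_j$ already uses up at least one basis vector of $V_{+}$; hence $s\le\dim V_{+}$, so $\h$ is simple in case (1), and $\h$ has at most two simple ideals (plus at most a one-dimensional center) in the remaining cases.

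Next I would cut $V$ into blocks. Decompose $V|_\h=\bigoplus_iW_i$ into irreducibles and show that no $W_i$ is nontrivial over two distinct ideals $\h_j,\h_k$: writing such a $W_i$ as an external tensor product $W_{i,j}\boxtimes W_{i,k}\boxtimes\cdots$, the component $h_j$ of $h$ acts on $W_{i,j}$ with some nonzero eigenvalue and --- by $\sl_2$-weight symmetry along any root line through $h_j$ --- also with its negative, and similarly for $h_k$; combining these produces an eigenvalue of $h$ on $W_i$ of absolute value $\ge2$, a contradiction. Therefore $V=V'\oplus\bigoplus_jV_j$, where $\h_j$ together with a $\le1$-dimensional central torus acts faithfully on $V_j$, the other ideals act trivially, $h_j$ acts on $V_j$ with all eigenvalues in $\{-1,0,1\}$ and $h_j\ne0$, and $\sum_j\dim(V_j\cap V_{+})=\dim V_{+}\in\{1,2\}$.

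The core of the argument is the classification of the resulting ``atoms'' $(\l,W,z)$: $\l$ simple, $W$ a faithful $\l$-module without trivial summands, $z\in\l$ a nonzero semisimple element with all eigenvalues on $W$ in $\{-1,0,1\}$. Since $\ad_\l(z)$ is the restriction to $\l$ of $\ad(z)$ on $\gl(W)$, its eigenvalues lie in $\{0,\pm1,\pm2\}$, so $z$ defines a short $\Z$-grading of $\l$; and after choosing a Cartan subalgebra through $z$, the requirement that every weight of $W$ pair with $z$ into $\{-1,0,1\}$ makes the weight system of $W$ extremely thin in the $z$-direction. Running through the simple types and their minuscule, quasiminuscule and adjoint representations (using the tables of small representations), the only candidates that survive are $(\sl_k,\C^k\text{ or }(\C^k)^{*},z)$ and $(\sl_k,\C^k\oplus(\C^k)^{*},z)$ with $z$ a coroot, $(\so_k,\C^k,z)$ with $z$ a coroot, $(\sp_{2k},\C^{2k},z)$ with $z$ a long- or short-root coroot, and a short list of spinorial exceptions $(\spin_7,\C^8,z)$, $(G_2,\C^7,z)$, $(\so_8,\text{a half-spin module},z)$ with $z$ a suitable coroot. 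I expect this step --- verifying that no other simple $\l$ and no other small representation admits such a $z$, and pinning down precisely which coroot is forced --- to be the main obstacle; it is here that low-rank coincidences and Dynkin-index comparisons have to be handled carefully.

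Finally I would reassemble along $V=V'\oplus\bigoplus_jV_j$ under three constraints. First, for $\g=\so_n,\sp_{2n}$ the invariant form must restrict nondegenerately to each $V_j$, so a non-self-dual atom must be paired with its dual; this produces exactly the $\tau\oplus\tau^{*}$-type embeddings $\sl_k^{diag},\so_k^{diag},\sp_k^{diag}$ (and, when a central torus is allowed, $\gl_k^{diag}$), while a self-dual atom contributes $\so_k$, $\sp_{2k}$ or one of the spinorial subalgebras. Second, the count $\sum_j\dim(V_j\cap V_{+})=\dim V_{+}\in\{1,2\}$ both caps $s$ and fixes which coroot occurs in each $\sp$-block. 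Third, in case (3) the hypothesis that $h$ lies in an $\sl_2$-triple of $\h$ forces the $z$ of each atom to be the semisimple element of an $\sl_2$-subalgebra, pruning the remaining possibilities. The convention that $\h_1\oplus\h_2$ means $(\C^n)^{\h_1}+(\C^n)^{\h_2}=\C^n$ then makes the two-ideal case in (3) come out precisely as $\so_k\oplus\so_l$ (and similarly $\sp_{2k}\oplus\sp_{2l}$ in (4)). Reading off which combinations occur in each ambient type yields assertions (1)--(4).
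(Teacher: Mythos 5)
There are two genuine gaps here, and the first one is fatal to the architecture of your argument. Your block-decomposition lemma --- that an irreducible constituent $W_i$ of $V|_\h$ cannot be nontrivial over two distinct simple ideals $\h_j,\h_k$ --- is false. The step ``combining these produces an eigenvalue of $h$ on $W_i$ of absolute value $\geq 2$'' tacitly assumes that the nonzero eigenvalues of the individual components $h_j$ on the tensor factors have absolute value at least $1$; but integrality is only available for the eigenvalues of $h$ itself (sums over the factors), not for the components separately. Concretely, take $\h=\so_4=\sl_2\oplus\sl_2$ acting irreducibly on $\C^2\otimes\C^2\subset\C^n$ and $h=\operatorname{diag}(1,0,0,-1)\sim_G(\varepsilon_1-\varepsilon_4)^\vee$: its two components are each $\operatorname{diag}(1/2,-1/2)$, so $h$ lies in no proper ideal of $\h$, yet the eigenvalues of $h$ on this irreducible $4$-dimensional constituent are $1,0,0,-1$. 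This also refutes your intermediate conclusions that $\h$ is simple in case (1) and that the number of simple ideals is at most $\dim V_+$ (consider $\so_4\oplus\so_l$ in case (3)). The surviving subalgebras happen to be hidden inside the labels $\so_k$, $\so_k\oplus\so_l$, \dots, but your method as written would not produce them. The paper sidesteps this entirely by never decomposing $\h$ into ideals: it first shows that $V/V^{\h}$ is irreducible --- if it split as $W_1\oplus W_2$ with both summands nontrivial, the condition $\tr_{W_i}h=0$ would force $h$ to act trivially on one summand and hence to lie in the proper ideal $\ker(\h\rightarrow\gl(W_i))$ --- or equals $V_0\oplus V_0^*$ in the non-semisimple orthogonal case, and then classifies the possible irreducible linear algebras directly.

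The second gap is that your core classification --- the list of ``atoms'' $(\l,W,z)$ --- is asserted rather than proved, as you yourself acknowledge. This is exactly where the paper does its real work. For the multiplicity-one eigenvalue pattern it quotes known classifications of irreducible linear groups containing such an element; for the self-dual irreducible module with $h$ of eigenvalue pattern $(\pm1)$ each of multiplicity $2$ (Step 3 of its proof) it first proves that the highest weight is the \emph{only} nonzero dominant weight of $V$ (using that $h$ may be taken dominant, hence is a positive combination of simple coroots, so every dominant weight pairs positively with $h$, together with a lemma of Vinberg excluding the two-dominant-weight configuration), and then splits into the quasi-minuscule case $V^{\t}\neq 0$ (yielding only the adjoint module of $\sl_3$, the tautological module of $\so_{2l+1}$, and the $7$-dimensional module of $G_2$) and the minuscule case $V^{\t}=0$ (yielding the tautological modules of $\so_{2n},\sp_{2n}$, a half-spin module of $\so_8$, and $(\so_7,\pi_3)$), in the latter case by solving an explicit linear system for $h$. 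Without an argument of this kind --- and without repairing the decomposition step above --- your proof does not go through.
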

\begin{proof}
Let  $V$ denote the tautological  $\g$-module.

{\it Step 1.} Here we describe all semisimple subalgebras
$\h\subset\gl(V)$ containing  $h\in\gl(V)$ such that:
\begin{itemize}
\item[(a)]  $h$ is semisimple and its eigenvalues are $\pm 1$, each of multiplicity 1, and 0
of multiplicity $\dim V-2$.
\item[(b)]  $h$ is not contained in a proper ideal of $\h$.
\end{itemize}

 Since $\tr_U\xi=0$ for any $\h$-module
$U$ and $\xi\in\h$, we see that $V/V^\h$ is an irreducible
$\h$-module. All irreducible linear algebras $\h$ containing such
$h$ where described in Proposition 8 from~\cite{wc}. These are
$\sl(V/V^\h),\so(V/V^\h)$ and $\sp(V/V^\h)$.

{\it Step 2.} Here we describe all reductive subalgebras $\h\subset
\so(V)$ containing $h$ satisfying (a),(b). If $\h$ is semisimple,
then $\h=\so_k$ by step 1. Suppose $\h$ is not semisimple. In this
case $V/V^\h$ is reducible.  Analogously to step 1, there is no
proper orthogonal submodule in $V/V^\h$. Therefore there is an
irreducible $\h$-module $V_0$ such that $V/V^\h=V_0\oplus V_0^*$. We
may assume that $h$ acts on $V_0$ as $diag(1,0,\ldots,0)$. By
\cite{Vin_Heis}, Proposition 2, $\h=\gl_k$.

{\it Step 3.} Here we classify all irreducible subalgebras
$\h\subset\gl(V)$ such that the $\h$-module $V$ is self-dual and
there is $h\in \h$ that satisfies (b) and
\begin{itemize}
\item[(a$'$)]  $h$ is semisimple and its eigenvalues are $\pm 1$ of
multiplicity  2 each and $0$ of multiplicity $\dim V-4$.
\end{itemize}

Choose a Cartan subalgebra $\t\subset\h$ containing $h$. We may
assume that the positive root system $\Delta(\h)_+$ is chosen in
such a way that
  $\langle \Delta(\h)_+,h\rangle\geqslant 0$.
Let $\lambda_1,\ldots,\lambda_k$ be all different dominant weights
of the $\h$-module $V$, where $\lambda_1$ is the highest weight. Let
us check that $k=1$. Assume the converse. We note that
$\langle\lambda_i,h\rangle>0$ for all $i$. Indeed, being a dominant
coweight, $h$ is the sum of simple coroots with positive
coefficients. Therefore $k=2$ and both $\lambda_1,\lambda_2$ have
multiplicity 1. Besides, as $V$ is self-dual,
 $W(\h)\lambda_i=-W(\h)\lambda_i,i=1,2$. For all $\nu\in W(\h)\lambda_i, \nu\neq \pm\lambda_i,$
 we get $\langle\nu,h\rangle=0$.
 It was shown in~\cite{Vin_Heis}, Lemma 2, that  $\h=\so_n,\sp_n$
 and the weight
 $\lambda_i$ is proportional (up to an automorphism for $\h=\so_8$) to the highest weight of the tautological
$\h$-module. But  $\lambda_1$ and $\lambda_2$ are not proportional,
for $\langle\lambda_i, h\rangle=1$. Thus $\h=\so_8$ and , up to an
automorphism, $\lambda_1=l\pi_1$, where $l>1$. In this case
$(l-2)\pi_1+\pi_2$ is a weight of $V$. Contradiction.

So the highest weight is the only nonzero dominant weight of the
$\h$-module $V$.

At first, let us consider the case $V^{\t}\neq 0$. In this case $\h$
is simple and
 $\lambda$ is the maximal short root.

Suppose $\h$ is of types $A,D,E$. In this case $V=\h$. There are
exactly two positive roots having a nonzero pairing with $h$. These
are the maximal root $\delta$ and another root, say, $\beta$.
Clearly,  $\beta$ is a simple root and any root greater than
 $\beta$ is maximal. Thus $\h=A_2$.

If $\h\cong\so_{2l+1}, l>1$, then $\h$ contains a required element
$h$.

If $\h\cong \sp_{2l},l>2,F_4$, then  $\Delta(\h)^{min}$ is the root
system $D_l$, and, by above, there is no  $h\in\t$ with required
properties.

Finally, let $\h=G_2$. In this case $\Delta(\h)^{min}=A_2$ and $h$
exists.

Now consider the case $V^{\t}= 0$. In this case $\lambda_1$ is
minuscule, that is,  $\langle \lambda_1,\delta^\vee\rangle=1$, where
$\delta^\vee$ denotes the maximal coroot. It follows that
$\lambda_1$ is a fundamental weight, пусть $\pi_m$. There is a
unique weight less than  $\lambda_1$ w.r.t. the natural order on the
set of weights, namely, $\lambda_1-\alpha_m$. This observation makes
possible to find the system of linear equations for $h$. This system
has a solution only in the following cases:

1) $\h=\so_{2n},\sp_{2n}$, $V$ is the tautological $\h$-module (or a
half-spinor module for  $\h=\so_8$).

2) $\h=\so_7, \lambda_1=\pi_3$.

{\it Step 4.} Complete the proof of the proposition. Assertions 1
and 2 were proved on steps 1 and 2, respectively. Assertions 3 and 4
in the case when the  $\h$-module $V/V^\h$ is reducible also follow
from steps  1,2. If $V/V^\h$ is irreducible, the image of $\h$ in
$\gl(V/V^\h)$ is one of the subalgebras found on step 3. All of them
except of $\ad(\sl_3)$ fulfill the condition that    $h$ is included
into an $\sl_2$-triple in $\h$.
\end{proof}

\begin{proof}[Proof of Theorem~\ref{Thm:7.0.2}]
Throughout the proof $\h$ denotes a $\Lambda$-essential subalgebra
of $\g$. Let $H$ denote the connected subgroup of $G$ with Lie
algebra $\h$ and $\widetilde{H}$ denote the inverse image of
$Z(N_G(H)/H)$ in $N_G(H)$.

{\it The case $\g\cong\sl_n$}. At first, suppose $W(\g,\h)\neq
W(\g)$. Let us check that $\Lambda(\g,\h)=\Lambda$, where
$\Lambda:=\Span_\Z(\alpha\in\Delta(\g)| s_\alpha\in W(\g,\h))$. By
Proposition~\ref{Prop:1.4.8}, there is an inclusion
$\Lambda(\g,\h)\subset \Lambda$. To prove the inverse inclusion we
need to check that $\alpha\in \Lambda(\g,\h)$ for all $\alpha\in
\Delta(\g)$ with $s_\alpha\in W(\g,\h)$. Considering case by case
possible groups $W(\g,\h)$ (\cite{Weyl}, Theorem 5.1.2), we note
that there is $\alpha_1\in\Delta(\g)$ such that $\alpha,\alpha_1$
satisfy the assumptions of Corollary~\ref{Cor:7.1.7}. Applying this
corollary, we get
 $\Lambda(\g,\h)=\Lambda$.

Now suppose $W(\g,\h)=W(\g)$. By Lemma~\ref{Lem:7.2.3},
$\Lambda(\g,\h)=2\Lambda(\g)$. Thanks to Lemma~\ref{Lem:7.2.1},
$G\alpha^\vee\cap\h\neq\varnothing$. By Lemma~\ref{Lem:7.2.2}, $\h$
is semisimple. Proposition \ref{Prop:7.2.5} implies
$\h=\sl_k,\so_k,\sp_{2k}$.

If $\h=\sl_k$, then, since $\a(\g,\h)=\t, W(\g,\h)=W(\g)$, we have
$k\leqslant \frac{n}{2}$. By (\ref{eq:7.2:3}), $k=\frac{n}{2}-1$. To
show that $\Lambda(\g,\h)=\Lambda(\g)$ it is enough to note
$\Lambda(\g,\h)\supset \Lambda(\g,\sl_{n/2})$ (see
Proposition~\ref{Prop:1.4.4}).

Let $\h=\sp_{2k}$. Analogously to the previous paragraph, we get
$k=\frac{n}{2}-2$ and
$\Lambda(\g,\h)\supset\Lambda(\g,\sp_{n-2})=\Lambda(\g)$.

Finally, suppose $\h=\so_k$. By (\ref{eq:7.2:3}),  $k=n$. From
\cite{Kramer}, Tabelle 1, it follows that $\X_{\SL_n,\SL_n/\SO_n}=
\Span_\Z(2\pi_1,\ldots,2\pi_{n-1})$. Since
$\X_{\SL_n,\SL_n/\SO_n}\subset \Lambda(\g,\h)$, we get
$\Lambda(\g,\h)=2\Lambda(\g)$.

{\it The case $\g=\so_{2n+1}, n\geqslant 2$}. At first,  we consider
the case $W(\g,\h)\neq W(\g)$. By \cite{Weyl},
$[\h,\h]=\sl_n^{diag}, G_2 (n=4),\spin_7, (n=5)$. Since
$\Lambda(\g)$ is generated by $\Delta(\g)^{min}$, it follows from
Proposition \ref{Prop:7.1.4} that there is $h\in\h, h\sim_G
\alpha^\vee$. By Proposition \ref{Prop:7.2.5}, $\h=\gl_n^{diag}$. By
\cite{Kramer}, Tabelle 1,
$\X_{\SO_{2n+1},\SO_{2n+1}/\GL_n^{diag}}=\Span_\Z\{\varepsilon_i\}|_{i=\overline{1,n}}$.
Assume that $G=\SO_{2n+1}$.  Choose nonzero vectors $v\in
(\C^{2n+1})^H, \omega\in (\bigwedge^2\C^{2n+1})^H$. The spaces
$(\bigwedge^{2i+1}\C^{2n+1})^H,  (\bigwedge^{2i}\C^{2n+1})^H$ are
1-dimensional, for $G/H$ is spherical. These spaces are generated by
$v\wedge\omega^{\wedge i},\omega^{\wedge i}$, respectively.  The
group $N_G(H)/H$ is isomorphic to $\Z_2$. The nontrivial element of
this group acts on $(\bigwedge^i\C^{2n+1})^{H}$ by
$(-1)^{2\{i/2\}n+[i/2]}$. To show that $\Lambda(\g,\h)$ has the
required form we use Proposition~\ref{Prop:3.7.1}.

Now suppose that $W(\g,\h)=W(\g)$. From Lemma~\ref{Lem:7.2.1} it
follows that there is an element $h\in\h$ satisfying
(\ref{eq:7.2:1}) and such that $h\sim_{\SO_{2n+1}}
diag(2,-2,0\ldots,0)$ whence $\tr_\g h^2=16n-8$. By assertion 2 of
Proposition~\ref{Prop:7.2.5},   $h$ is contained in an ideal of $\h$
of the form $\gl_k^{diag}$ or $\so_k$. So we have $\tr_\h
h^2=8(k-1)$ (for $\gl_k^{diag}$) or $\tr_\h h^2=8(k-2)$ (for
$\so_k$). Thus the ideal $\h_1$ of $\h$ generated by $h$ coincides
with $\so_{n+1}$.

 Let us show, at first, that
$\Lambda(\so_{2n+1},\so_{n+1})$ has the form indicated in
Table~\ref{Tbl:7.0.3}. By Lemma \ref{Lem:7.2.2},
$\Lambda(\so_{2n+1},\so_{n+1})\neq 2\Lambda(\g)$, for
(\ref{eq:7.2:3}) is rewritten in the form $4(4n-4)=2(8n-4)-16$.

Suppose $G=\SO_{2n+1}$.  By Proposition~\ref{Prop:3.7.1},
$\Lambda(\g,\h)=\X_{G,G/\widetilde{H}}$. Now it is enough to show
that $L_{0\,G,G/\widetilde{H}}\neq \{1\}$. As Knop proved
in~\cite{Knop1}, Korollar 8.2, $L_{0\,G,G/\widetilde{H}}$ is the
stabilizer in general position for the action
$G:T^*(G/\widetilde{H})$. So it remains to show that the stabilizer
in general position for the action $\widetilde{H}:\g/\h$ is
nontrivial. This action coincides with the action of $\O(n+1)$ on
$(\C^{n+1})^{\oplus n}$. The stabilizer in general position is
isomorphic to  $\Z_2$.

Now suppose $\h\neq\h_1=\so_{n+1}$. By above,
$\Lambda(\g,\h)=2\Lambda(\g)$. Assertion 1 of Lemma~\ref{Lem:7.2.2}
implies that $\h$ is semisimple. By Lemma~\ref{Lem:7.2.1}, there is
$h\in\h$ such that and
\begin{equation}\label{eq:7.2:10}k_\g=\tr_\g h^2=2\tr_\h h^2+8\end{equation}%TT+TTL TL+TLЈL +LT+T $k_\g$
and $h\sim_{\SO_{2n+1}}diag(1,1,-1,-1,0\ldots,0)$.  As we checked in
the proof of the inequality $\Lambda(\g,\h_1)\neq 2\Lambda(\g)$,
$h\not\in\h_1$. Further,  $h\not\in\h_1^\perp$. Otherwise, $\tr_\h
h^2=\tr_{\z_\g(\h)}h^2$ and (\ref{eq:7.2:10}) does not hold.

Let us check that if $\h\neq
\widehat{\h}:=\n_\g(\h_1)=\so_n\oplus\so_{n+1}$, then  $\tr_\h
h^2<\tr_{\widetilde{\h}} h^2$. Otherwise, $h$ acts trivially on
$\widehat{\h}/\h$, for $2\tr_{\widehat{\h_1}} h^2=k_\g-8$. So $\h$
and $\widehat{\h}$ have a common ideal containing $h$. Since
$h\not\in\h_1$, this is impossible
 whence $\h=\so_{n+1}\oplus
\so_n$. Let us check that $\Lambda(\g,\h)=2\Lambda(\g)$. Indeed,
$N_G(H)=\SL_{2n+1}\cap (\O_{n}\times \O_{n+1})$,
$\Lambda(\g,\h)=\X_{G,G/N_G(H)}$, and the last lattice is easily
extracted from \cite{Kramer}, Tabelle 1.

{\it The case $\g=\sp_{2n}, n>2$.}

At first, we determine all  nonzero $\Lambda$-essential subalgebras
$\h\subset\g$ whose proper ideals are not $\Lambda$-essential.
Thanks to assertion 2 of Lemma \ref{Lem:7.2.2}, we see that $\h$ is
semisimple. Applying Theorem 5.1.2 from \cite{Weyl}, we see that
$W(\g,\h)$ contains $s_\alpha$ for any $\alpha\in \Delta(\g)^{min}$.
By assertion 2 of Lemma~\ref{Lem:7.2.1}, there is a subalgebra
$\s\subset\h$ not contained in a proper ideal of $\h$ such that
$\s\sim_{\Sp_{2n}} \so_3^{diag}$. Taking into account  assertion 3 of %Tвести обозначения такого типа
Proposition~\ref{Prop:7.2.5}, we see that
$\h=\sp_{2k},\sl_k^{diag},\so_k^{diag}$. Only
$\sl_n^{diag},\sp_{n-2}$ satisfy (\ref{eq:7.2:4}).

Let us show that $\Lambda(\sp_{4m+2},\sp_{2m})=\Lambda(\g)$. Set
$G=\Ad(\Sp_{4m+2})$. The center $N_G(H)/H$ is $\{1\}$. By
Proposition~\ref{Prop:3.7.1}, $\Lambda(\g,\h)=\X_{G,G/H}$. So it
remains to prove that the s.g.p. for the action $H:\g/\h$ is
trivial. The last action coincides with the natural action of
$\Sp_{2m}$ on  $(\C^{2m})^{\oplus 2m+2}$. But the s.g.p. is trivial
already for $\Sp_{2m}:(\C^{2m})^{\oplus 2m}$.

Suppose $\h=\sl_n^{diag}$. Let us check that
$\Lambda(\g,\h)=\Span_\Z(2\Delta(\g)^{min}\cup \Delta(\g)^{max})$.
At first, we show that
$\Lambda(\sp_{2n},\gl_n^{diag})=2\Lambda(\g)$. Indeed the subalgebra
$\gl_n^{diag}\subset\sp_{2n}$ is spherical, and the subgroup
$\GL_n^{diag}\subset \Sp_{2n}$ is of index  2 in its normalizer. By
Proposition~\ref{Prop:3.7.1}, $\Lambda(\sp_{2n},\gl_n^{diag})$ is
also of index 2 in $\X_{\Sp_{2n},\Sp_{2n}/\GL_n^{diag}}$. By
\cite{Kramer}, the last lattice equals
$\Span_\Z(2\pi_1,\ldots,2\pi_n)$ whence the equality for
$\Lambda(\sp_{2n},\gl_n^{diag})$.

By assertion 2 of Lemma \ref{Lem:7.2.2}, $\Lambda(\g,\h)\subset
\Span_\Z(2\Delta(\g)^{min}\cup\Delta(\g)^{max})$. The equality will
follow if we check that $\Lambda(\g,\h)\neq 2\Lambda(\g)$. Assume
the converse. By Lemma \ref{Lem:7.2.1}, there is $h_0\in \h,
h_0\sim_{\Sp_{2n}}diag(1,-1,0,\ldots,0)$, which is absurd.

It remains to prove that  $\gl_n\subset \sp_{2n}$ is the only
subalgebra $\h$ satisfying $\Lambda(\g,\h)=2\Lambda(\g)$. Indeed,
let  $\h$ be such a subalgebra. By assertion 2 of
Lemma~\ref{Lem:7.2.2}, $\Lambda(\g,[\h,\h])\neq \Lambda(\g)$. Thus
$[\h,\h]=\sl_n$.

{\it The case $\g=\so_{2n},n\geqslant 8$.} Let $\h$ be a
$\Lambda$-essential subalgebra of $\g$. By assertion of 1
Lemma~\ref{Lem:7.2.2},  $\h$ is semisimple. According to
Lemma~\ref{Lem:7.2.1}, $\h$ contains an element $h$ satisfying
(\ref{eq:7.2:1}) such that
$h\sim_{\SO_{2n}}diag(1,1,-1,-1,0\ldots,0)$. By assertion 3 of
Lemma~\ref{Lem:7.2.2}, we may assume that some multiple of  $h$ can
be included into an  $\sl_2$-triple not contained in a proper ideal
of $\h$. Assertion 4 of Proposition \ref{Prop:7.2.5} implies that
$\h=\so_k,\so_k\oplus\so_l,
\sl_k^{diag},\sp_k^{diag},\so_k^{diag},\spin_7,$ $G_2$. We note that
$\h\neq\sl_n,\so_k,k>n$, for $\a(\g,\h)=\t$.

Firstly, consider the case $\h=\so_k\oplus \so_l$, where
$k,l\leqslant n$. Here the projection of  $h$ to both ideals
$\so_k,\so_l$ is conjugate to $diag(1,-1,0,\ldots,0)$.
(\ref{eq:7.2:1}) holds iff $k=l=n$. Let us check that indeed
$\Lambda(\g,\h)=2\Lambda(\g)$. We may assume  $G=\SO_{2n}$. The
homogeneous space $G/H$ is spherical and $\#N_G(H)/H=2$. Thus
$\Lambda(\g,\h)=\X_{G,G/N_G(H)}$ is of index 2 in $\X_{G,G/H}$. The
required equality follows easily from Tabelle 1 of~\cite{Kramer}.

Among the remaining subalgebras $\h$  only
$\sl_{n-2}\subset\so_{2n}$, $\spin_7\subset\so_{12},G_2\subset
\so_{10}$ satisfy (\ref{eq:7.2:3}). Let us check that in these cases
$\Lambda(\g,\h)=\Lambda(\g)$. For $\sl_{n-2}$ this stems from the
inclusion $\sl_{n-2}\subset\sl_{n-1}$. In the other cases take
$\Ad(\SO_{2m})$ for $G$. Note that $N_G(H)/H\cong \Ad(\SO_k)$ for
$k=3,4$. By Proposition \ref{Prop:3.7.1},
$\Lambda(\g,\h)=\X_{G,G/H}$. To prove the equality
$\X_{G,G/H}=\Lambda(\g)$ it is enough to check that the s.g.p. for
the action $H:\g/\h$ is trivial. This follows from the
classification of Popov, \cite{Popov_stab}.

{\it The case $\g=E_6$.} By Lemma~\ref{Lem:7.2.2}, $\h$ is
semisimple. Let $\h_j,i_j,a_j, j=\overline{1,k},$ be such as in
assertion 1 of Lemma~\ref{Lem:7.2.2}. We reorder $\h_j$ in such a
way that $k_{\h_1}\geqslant k_{\h_2}\geqslant \ldots\geqslant
k_{\h_k}$. Since $\a(\g,\h)=\t$, we have $\h_i\neq D_5,A_5,B_4,F_4$.
(\ref{eq:7.2:3}) can be rewritten as
\begin{equation}\label{eq:7.2:5}
\begin{split}&\sum_{j=1}^k a_ji_j=4,\\
&\sum_{j=1}^k a_j k_{\h_j}=80.
\end{split}
\end{equation}
Thus $k_{\h_1}\geqslant 20$. It follows that $\h_1$ is one of the
subalgebras $A_4,B_3,C_4,D_4\subset E_6$.

If $\h_1=D_4$, then $\h=\h_1$, for $\n_\g(\h_1)/\h_1$ is
commutative. This contradicts (\ref{eq:7.2:5}).  The subalgebra
$\h_1=B_3$ is embedded into $D_4$ and
$\h_1=[\n_\g(\h_1),\n_\g(\h_1)]$ so in this case
$\Lambda(\g,\h)=\Lambda(\g)$ too.

Consider the case $\h_1=A_4$. It is easy to see that
$\n_\g(\h_1)/\h_1\cong \C\times\sl_2$. If $\h\neq \h_1$, then
$\h=A_4\times A_1$. However in this case (\ref{eq:7.2:5}) has no
positive solutions. So $\h=A_4$. Take $\Ad(E_6)$ for $G$. The
subalgebra $\h$ is included into $D_5$. So $N_G(\h)$ acts on $\h$ as
$\Aut(\h)$. Clearly, $N_G(H)^\circ$ is a Levi subgroup of $G$. From
this we deduce that $N_G(H)$ has exactly two connected components.
Choose $\sigma\in N_G(H)\setminus N_G(H)^\circ$. Let $Z,F$ denote
the center and the commutant of  $(N_G(H)/H)^\circ$, respectively.
The element $\sigma$ acts on $\z$ by $-1$. Therefore the image of
$Z(N_G(H)/H)$ under the projection $(N_G(H)/H)^\circ\rightarrow
(N_G(H)/H)^\circ/F$ is isomorphic to $\Z_2$. On the other hand, the
center of $F$ is of order  at most 2. So $\# Z(N_G(H)/H)\leqslant
4$. By Proposition~\ref{Prop:3.7.1},
$\X_{G,G/\widetilde{H}}=\Lambda(\g,\h)$. If $\Lambda(\g,\h)\neq
\Lambda(\g)$, then $L_{0\,G,G/\widetilde{H}}\cong
\Lambda(\g)/2\Lambda(\g)\cong \Z_2^6$. So the s.g.p. for the action
$\widetilde{H}:\g/\h$ is isomorphic to $\Z_2^6$. Therefore the
s.g.p. for the action $H:\g/\h$ is nontrivial. Clearly, $\g/\h\cong
(\bigwedge^2 \C^5\oplus \bigwedge^2\C^{5*}\oplus\C)^{\oplus 2}\oplus
\C^5\oplus\C^{5*}$. By~\cite{Popov_stab}, the s.g.p. for this action
is trivial.

Finally, let us consider the case $\h=C_4$. In this case the
inequality $\X_{G,G/H}\neq\Lambda(\g)$ stems from \cite{Kramer},
Tabelle 1.

{\it The case $\g=E_7$.} In this case, by Lemma~\ref{Lem:7.2.2},
$\h$ is semisimple. Define $\h_j,i_j,a_j, j=\overline{1,k},$
analogously to the previous case. Since $\a(\g,\h)=\t$, $\h_i\neq
E_6,D_6$. (\ref{eq:7.2:3}) is rewritten as
\begin{equation}\label{eq:7.2:6}
\begin{split}&\sum_{j=1}^k a_ji_j=4,\\
&\sum_{j=1}^k a_j k_{\h_j}=128.
\end{split}
\end{equation}
Thus $k_{\h_1}\geqslant 32$. It follows that $\h_1=A_7,D_5,B_5,F_4$.
If $\h_1=B_5,F_4$, then $k_{\h_1}=36$. Therefore $\h\neq\h_1,
a_1\leqslant 3$, and $k_{\h_2}\geqslant 20$. One easily sees that
this is impossible. If $\h_1=D_5$, then $k_{\h_1}=32$ whence
$\h=\h_1$. But $D_5$ is included into $B_5$ whence
$\Lambda(E_7,D_5)=\Lambda(\g)$. Finally, for $\h=A_7$ the inequality
$\Lambda(\g,\h)\neq \Lambda(\g)$ follows from \cite{Kramer}, Tabelle
1.

{\it The case $\g=E_8$.} Again, $\h$ is semisimple. Let
$\h_j,i_j,a_j, j=\overline{1,k}$ be such as in the case $E_6$. Note
that $\h_1\neq E_7$.  (\ref{eq:7.2:3}) is rewritten as
\begin{equation}\label{eq:7.2:7}
\begin{split}&\sum_{j=1}^k a_ji_j=4,\\
&\sum_{j=1}^k a_j k_{\h_j}=224.
\end{split}
\end{equation}

Therefore $k_{\h_1}\geqslant 56$. Hence $\h_1=D_8$. The inequality
$\Lambda(\g,\h)\neq \Lambda(\g)$ follows from \cite{Kramer}, Tabelle
1.

{\it The case $\g=F_4$.} At first, suppose that $\h$ does not have
nonzero  $\Lambda$-essential ideals. By assertion 2 of Lemma
\ref{Lem:7.2.2}, $\h$ is semisimple. Let $a_j,i_j,k_{\h_j}$ have the
same meaning as in the case $\g=E_6$. Since $\a(\g,\h)=\t$, we see
that $\h\neq B_4,D_4$. (\ref{eq:7.2:4}) can be rewritten as
\begin{equation}\label{eq:7.2:8}
\begin{split}
&\sum_{j=1}^k a_ji_j=8,\\
&\sum_{j=1}^k a_j k_{\h_j}=128.
\end{split}
\end{equation}
It follows that $k_{\h_1}\geqslant 16$. Thus $\h_1=A_3,C_3,B_3$ or $
G_2$. If $\h_1=A_3$, then  $k_{\h_1}=16$. Thus $a_1=8$ and
$\h=\h_1$. However  $A_3$ does not contain a subalgebra
$\s\cong\sl_2$ of index 8, contradiction with assertion 2 of
Lemma~\ref{Lem:7.2.2}.

Suppose $\h_1=B_3$. Since $k_{\h_1}=20$, we see that $\h\neq \h_1$.
This contradicts  $\h_1=[\n_\g(\h_1),\n_\g(\h_1)]$. So
$\Lambda(F_4,B_3)=\Lambda(\g)$. If $\h_1=G_2$, then again
$\h_1=[\n_\g(\h_1),\n_\g(\h_1)]$ whence $\h=\h_1$. However, $G_2$ is
included into $B_3$, contradiction.

Finally, consider the case  $\h_1=C_3$. Since $k_{\h_1}=16$, we have
$\h=\h_1$. The  $H$-modules $\g/\h\cong V(\pi_3)^{\oplus 2}$ are
isomorphic. As Popov proved in~\cite{Popov_stab}, the s.g.p. for the
action $\Sp(6):V(\pi_3)^{\oplus 2}$ is isomorphic to $\Z_2\times
\Z_2$. Thus $L_{0\,G,G/H}\cong \Z_2\times \Z_2$ whence
$\Lambda(\g,\h)\neq \Lambda(\g)$. By Theorem 5.1.2 from \cite{Weyl},
$W(\g,\h)=W(\g)$. Since the system (\ref{eq:7.2:3}) has no solution,
we get $\Lambda(\g,\h)\neq 2\Lambda(\g)$.

Now let us determine all $\Lambda$-essential subalgebras
$\h\subset\g$ that have the ideal $C_3$. In particular,
$W(\g,\h)=W(\g)$. By assertion 1 of Lemma ~\ref{Lem:7.2.2},  $\h$ is
semisimple. The only possibility is $\h=C_3\times A_1$. The equality
$\Lambda(\g,\h)=2\Lambda(\g)$ follows from \cite{Kramer}, Tabelle 1.

{\it The case $\g=G_2$.} By Lemma \ref{Lem:7.2.1},
$\Lambda(\g,\h)=2\Lambda(\g)$ and there is $h\in\h,
h\sim_G\alpha^\vee,\alpha\in \Delta(\g)^{min}$ such that
\begin{equation}\label{eq:7.2:9}
\tr_\h h^2=4.\end{equation} Since $\a(\g,\h)=\t$, we have $\h\neq
A_2$. Note that $\h\cap G\beta^\vee\neq\varnothing$ for $\beta\in
\Delta(\g)^{max}$. Thus $\rank\h=2$ and we may assume that
$\t\subset \h$. There are three (up to sign) elements in $\t$ that
are $G$-conjugate with $\alpha^\vee$. Considering them case by case,
we see that if (\ref{eq:7.2:9}) holds, then $\h=A_1\times
\widetilde{A}_1$. Here $\Lambda(\g,\h)=2\Lambda(\g)$ stems
from~\cite{Kramer}, Tabelle 1.
\end{proof}

\subsection{Proof of Theorem~\ref{Thm:7.0.5}}\label{SUBSECTION_root4}
At first, we check that $H^{\X-sat}\subset N_G(H)$. By
Lemma~\ref{Lem:1.3.4}, $H^{\X-sat}/\widehat{H}\subset
Z(N_G(\widehat{H})/\widehat{H})$, where the subgroup
$\widehat{H}\subset N_G(H)$ was defined in Subsection
\ref{SUBSECTION_root_intro} before Theorem~\ref{Thm:7.0.5}.  It is
obvious that $H\subset N_G(\widehat{H})$ whence the claim. Note also
 that $H_0=H\cap H^{\X-sat}$ is a normal subgroup in $H$.

Now let us show that $\X_{G,G/H}=\X_{G,G/H_0}$. By the definition of
$H_0$, we have $\Lambda(\g,\h_0)=\Lambda(\g,\h)\subset
\X_{G,G/H}\subset \X_{G,G/H_0}$. The inclusion
$\X_{G,G/H}/\Lambda(\g,\h)\hookrightarrow
\X_{G,G/H_0}/\Lambda(\g,\h)$ corresponds to the epimorphism
$\A_{G,G/H_0}\twoheadrightarrow \A_{G,G/H}$ (existing by
Proposition~\ref{Prop:1.4.5}). It remains to check that the kernel
of the last homomorphism is trivial. By the uniqueness part of
Proposition~\ref{Prop:1.4.5}, the homomorphism
$\A_{G,G/H_0}\rightarrow \Aut^G(G/H)\cong N_G(H)/H$ coincides with
$H^{\X-sat}/H_0\rightarrow N_G(H)/H$. The latter is injective, for
$H_0=H\cap H^{\X-sat}$.

Let us show that $\h_0=[\h^{\X-sat},\h^{\X-sat}]$. This will
immediately yield $H_0^{\X-sat}=H^{\X-sat}$. Let us note that
$[\h^{\X-sat},\h^{\X-sat}]=\widehat{\h}\subset\h$ by the definition
of $H^{\X-sat}$. The required equality follows from the observation
that $\widehat{\h}$ is the maximal ideal of $\h^{\X-sat}$ such that
$\a(\g,\widehat{\h})=\t$.

Proceed to the proof of assertion 2. The only nontrivial claim here
is the particular form of the duality for algebras $\h$ from
Table~\ref{Tbl:7.0.3}. The Frobenius reciprocity implies that
$\X_{G,G/H^\circ}$ is spanned by all $\lambda$ with
$V(\lambda^*)^\h\neq \{0\}$ (as $\h$ is reductive, the latter is
equivalent to $V(\lambda)^\h\neq \{0\}$). Besides, if
$V(\lambda)^\h\neq \{0\}$, then $\chi_\lambda$ coincides with the
character by that $H^{\X-sat}/H^\circ$ acts on $V(\lambda^*)^\h$.
Hence the claim on the restriction of $\chi_\lambda$ to
$Z(G)/(Z(G)\cap H^\circ)$.

All subalgebras except  NN2,4,7,11 are spherical and the lattices
$\X_{G,G/H^\circ}$ were computed in \cite{Kramer}, Tabelle 1. In
cases 2,4 the lattice $\X_{G,G/H^\circ}$ coincides with  $\X(G)$.
This easily follows from observations of the previous paragraph. In
case 7 the lattice is extracted from tables in~\cite{Panyushev4}.
Finally, in case 11 we have seen in the proof of
Theorem~\ref{Thm:7.0.2} that $L_{0\,G,G/H^\circ}\cong \Z_2\times
\Z_2$. Since $\X_G/\Lambda(\g,\h)\cong \Z_2\times \Z_2$, we have
$\X_{G,G/H^\circ}=\Lambda(\g,\h)$. In all cases in consideration
$H^{\X-sat}/H^\circ=Z(N_G(H^\circ))/H^\circ$. Finding a generator (a
typical element in cases 2,3) is not difficult. The character
$\chi_\lambda$ is computed by using the remarks of the previous
paragraph.

Assertion  3 follows directly from the definition of the duality.

\subsection{Finding distinguished components}\label{SUBSECTION_root5}
Below in this subsection  $X=G/H$ and $\underline{X}$ is the
distinguished component of $X^{L_{0\,G,X}}$. To find a point from
$\underline{X}$ we use the following proposition (compare with
\cite{Weyl}, Proposition 4.1.3).

\begin{Prop}\label{Prop:7.5.1}
We use the notation established in Subsection
\ref{SUBSECTION_root_intro}.
\begin{enumerate}
\item Let $H_0=H\cap H^{\X-sat}$, $\widetilde{X}=G/H_0$, $\pi:\widetilde{X}_0\rightarrow
X$ be the natural morphism and $\underline{\widetilde{X}}$ the
distinguished component of $\widetilde{X}^{L_{0\,G,X}}$. Then
$\pi(\underline{\widetilde{X}})\subset X$.
\item Suppose $H\subset H^{\X-sat}$, and let $\pi:G/H\twoheadrightarrow
G/H^{\X-sat}$ be the natural epimorphism and $\underline{X}'$  the
distinguished component of $(G/H^{\X-sat})^{L_{0\,G,G/H^{\X-sat}}}$.
Then $\pi^{-1}(\underline{X}')\subset \underline{X}$.
\item Let $G=G_1\times G_2, H=H_1\times H_2$. Then $\underline{X}$
coincides with the product of the distinguished components of
$(G_i/H_i)^{L_{0\,G_i,G_i/H_i}}$.
\item Suppose $\g$ is simple and $\h$ is the subalgebra from
Table \ref{Tbl:7.0.3} embedded into $\g$ as indicated below. Then
$eH^{\X-sat}$ lies in the distinguished component of
$(G/H^{\X-sat})^{L_{0\,G,G/H^{\X-sat}}}$.
\end{enumerate}
\end{Prop}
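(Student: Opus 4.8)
The plan rests on two elementary observations about distinguished components, both coming from the uniqueness clause in Definition~\ref{Def:1.5.7}. First, if $Y$ is an \emph{irreducible} subvariety of $X^{L_1}$ with $\overline{UY}=X$, then $Y\subseteq\underline X$: indeed $Y$ lies in some component $C$ of $X^{L_1}$, and $X=\overline{UY}\subseteq\overline{UC}\subseteq X$ forces $C=\underline X$. Second, every $\psi\in\Aut^G(X)$ preserves $X^{L_1}$ (since $l\psi(y)=\psi(ly)$ for $l\in L_1$) and preserves $\underline X$ (since $\psi$ is a $G$-equivariant homeomorphism, so $\overline{U\psi(\underline X)}=\psi(\overline{U\underline X})=X$); in particular $\A_{G,X}$ stabilizes $\underline X$. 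Granting these, assertions~(1) and~(3) are short. For (1): by Theorem~\ref{Thm:7.0.5}(1), $\X_{G,\widetilde X}=\X_{G,X}$, hence $L_{0\,G,\widetilde X}=L_{0\,G,X}=:L_0$; since $\pi$ is a surjective $G$-morphism, $\pi(\underline{\widetilde X})$ is an irreducible subvariety of $X^{L_0}$ with $X=\pi(\widetilde X)=\pi(\overline{U\underline{\widetilde X}})\subseteq\overline{U\pi(\underline{\widetilde X})}\subseteq X$, so the first observation yields $\pi(\underline{\widetilde X})\subseteq\underline X$. For (3): writing $X=X_1\times X_2$ with $X_i=G_i/H_i$ gives $\X_{G,X}=\X_{G_1,X_1}\oplus\X_{G_2,X_2}$, hence $L_{0\,G,X}=L_{0\,G_1,X_1}\times L_{0\,G_2,X_2}$ and $X^{L_{0\,G,X}}=X_1^{L_{0\,G_1,X_1}}\times X_2^{L_{0\,G_2,X_2}}$; its components are the products $C_1\times C_2$, and with $U=U_1\times U_2$ one has $\overline{U(C_1\times C_2)}=\overline{U_1C_1}\times\overline{U_2C_2}$, which equals $X$ exactly when both $C_i$ are distinguished.

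For assertion~(2), I would first observe that $\widehat H\subseteq H\subseteq H^{\X-sat}$, and that $H$ is normal in $H^{\X-sat}$ because $H^{\X-sat}/\widehat H=\A_{G,G/\widehat H}$ is a quasitorus; thus $T_0:=H^{\X-sat}/H$ is a quasitorus acting freely on $X=G/H$, and $\pi$ is a $T_0$-torsor with $X'=X/T_0$, where $X'=G/H^{\X-sat}$. By Lemma~\ref{Prop:1.4.5} applied to $G/\widehat H\to G/H$ and the definition of $H^{\X-sat}$, $T_0$ acts on $X$ by central automorphisms; in fact $T_0=\A_{G,X}$, since the homomorphism $\A_{G,G/\widehat H}\to\A_{G,X}$ of that lemma is surjective with image $T_0$. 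By the second observation $\underline X$ is $T_0$-stable, so $\pi^{-1}(\pi(\underline X))=\underline X$, and it suffices to show $\pi(\underline X)=\underline X'$. Now $\pi(\underline X)$ is irreducible with $\overline{U\pi(\underline X)}\supseteq\pi(\overline{U\underline X})=X'$; the substantive point is that $\pi(\underline X)\subseteq(X')^{L_{0\,G,X'}}$, i.e.\ that the a priori larger group $L_{0\,G,X'}$ still acts trivially along $\pi(\underline X)$. This I would extract from Lemma~\ref{Lem:3.7.2} applied with $X_0=X$, $T_0=\A_{G,X}$, $X_1=X'$, whose proof identifies $L_{0\,G,X'}$, via inefficiency kernels, with precisely the subgroup of $G$ acting trivially along $\underline X/\A_{G,X}=\pi(\underline X)$. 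Granting this, $\pi(\underline X)\subseteq\underline X'$ by the first observation, and equality — hence $\pi^{-1}(\underline X')=\pi^{-1}(\pi(\underline X))=\underline X$ — follows by comparing the dimensions of the $T_0$-torsors $\underline X\to\pi(\underline X)$ and $\pi^{-1}(\underline X')\to\underline X'$. I expect this identification of $L_{0\,G,X'}$ along $\pi(\underline X)$ to be the delicate conceptual point of~(2).

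Assertion~(4) I would prove row by row through Table~\ref{Tbl:7.0.3}. In each row the lattice $\X_{G,G/H^{\X-sat}}=\Lambda(\g,\h)$ is given by the last column, so $L_0':=L_{0\,G,G/H^{\X-sat}}=\{g\in Z_G(\Lambda(\g,\h)\otimes_\Z\C)\mid\chi(g)=1\text{ for all }\chi\in\Lambda(\g,\h)\}$ can be written down explicitly from the indicated embedding $\h\hookrightarrow\g$. Then I would check: (i) $L_0'\subseteq H^{\X-sat}$, so that $x_0:=eH^{\X-sat}\in(G/H^{\X-sat})^{L_0'}$; and (ii) $\mathfrak u+\z_\g(\Lie L_0')+\h^{\X-sat}=\g$, equivalently $\mathfrak u\cdot x_0+T_{x_0}\bigl((G/H^{\X-sat})^{(L_0')^\circ}\bigr)=T_{x_0}(G/H^{\X-sat})$. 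Since (ii) is an open condition on $(G/H^{\X-sat})^{(L_0')^\circ}$, it shows that the component through $x_0$ is $U$-dense, hence is the distinguished component of $(G/H^{\X-sat})^{(L_0')^\circ}$; the passage from $(L_0')^\circ$ to $L_0'$ is then supplied by Proposition~\ref{Prop:1.5.3} together with (i). The main obstacle of the whole proposition lies precisely here: carrying out (i)–(ii) for every row — in particular for $\g=\sl_{2n+1}$ with $\h=\sl_{n+1}$ or $\sp_{2n}$ (rows 2 and 3), where $\Lambda(\g,\h)$ is not of full rank and $L_0'$ properly contains a maximal torus — while keeping track of the explicit orthogonal, symplectic, and diagonal embeddings occurring in rows 4–15.
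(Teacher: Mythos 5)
Your treatment of assertions (1) and (3) is correct, and your reduction of (2) to the two stabilization observations about distinguished components is sensible; but the proposal has two genuine gaps, the more serious one in assertion (4). The group $L_0':=L_{0\,G,G/H^{\X-sat}}$ is \emph{finite} in every row of Table \ref{Tbl:7.0.3} except 2 and 3: there $\Lambda(\g,\h)$ has full rank, so $L_{G,G/H^{\X-sat}}=T$ and $L_0'$ is the intersection of the kernels of the characters in $\Lambda(\g,\h)$, typically a product of copies of $\Z_2$. In that situation $(L_0')^\circ=\{1\}$, your condition (ii) reads $\u+\g+\h^{\X-sat}=\g$ and is vacuously true, $(G/H^{\X-sat})^{(L_0')^\circ}$ is the whole space, and Proposition \ref{Prop:1.5.3} degenerates to a tautology. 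So your scheme never touches the actual problem, namely deciding which of the finitely many components of $(G/H^{\X-sat})^{L_0'}$ is $U$-dense and why $eH^{\X-sat}$ lies on it. The paper does this with two different tools: for the rows where $\h=\g^\sigma$ is the fixed subalgebra of a Weyl involution one has $\h\oplus\b=\g$, so $BeH^{\X-sat}$ is dense and $TeH^{\X-sat}$ lies in a single component of the fixed locus whose $U$-saturation is therefore dense; for rows 4, 6, 7, 11 one verifies the criterion of \cite{Weyl}, Proposition 4.3.2, namely $L_0'\subset H$, $\dim G-\dim N_G(L_0')=2(\dim H-\dim N_H(L_0'))$ and $N_G(L_0')=N_G(L_0')^\circ N_H(L_0')$. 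Neither of these is an ``open condition on the fixed locus of $(L_0')^\circ$'', and the row-by-row verifications, which you defer, are where the work actually is.

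The second gap is in assertion (2). Your argument hinges on the inclusion $\pi(\underline X)\subseteq (G/H^{\X-sat})^{L_{0\,G,G/H^{\X-sat}}}$, i.e.\ on the fact that the a priori larger group $L_{0\,G,G/H^{\X-sat}}$ fixes $\pi(\underline X)$ pointwise. You flag this as the delicate point and appeal to Lemma \ref{Lem:3.7.2}, but that lemma computes the weight lattice of a rational quotient (equivalently, identifies $L_{T,Z_1}$ with $\iota_{T,Z_0}(T_0\cap\A_{T,Z_0})$ after reduction to a torus model); extracting from its proof the pointwise statement about the distinguished component is precisely what remains to be done, and it is not done here. (The subsidiary claim $T_0=\A_{G,X}$ is also unjustified --- Lemma \ref{Prop:1.4.5} only gives the image of $\A_{G,G/\widehat{H}}$ is \emph{contained in} $\A_{G,X}$ --- though the inclusion $T_0\subseteq\A_{G,X}$ suffices for the stability of $\underline X$.) For comparison, the paper disposes of (1)--(3) by citing the proof of \cite{Weyl}, Proposition 4.1.3, with $L^\circ_{0\,\bullet,\bullet}$ replaced by $L_{0\,\bullet,\bullet}$; a self-contained argument would be a genuine improvement, but as written yours is not complete.
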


Let us describe the embeddings $\h\rightarrow\g$ in consideration.
In cases 1,5,8-10,12-15 we embed $\h$ into $\g$ as the fixed-point
subalgebra of a Weyl involution $\sigma$, i.e., an involutory
automorphism of $\g$ fixing  $\t$ and acting on $\t$ by $-1$. An
involution $\sigma$ is defined  uniquely up to $T$-conjugacy.

In cases 2,3 the embedding $\h\hookrightarrow\g$ is such as in
\cite{Weyl}, Subsection 4.4 .

In case 4 we embed $\h$ into $\g$ as the annihilator of the vectors
$e_{i}+e_{2n+2-i}, i=\overline{1,n}$.

In case 6 we embed $\h$ into  $\g$ as the stabilizer of the
isotropic subspaces $U_{\pm}$ spanned by vectors of the form $x\pm i
\iota(x)$, where $x\in \Span_\C(e_{2i},i\leqslant i)$ and $\iota$ is
an isometrical embedding $\Span_\C(e_{2i})$ into $\Span_\C(e_{2i-1},
i\leqslant n+1)$.

In cases 7,11 $\h=\g^{(\alpha_1,\ldots,\alpha_n-1)},
\g^{(\alpha_1,\alpha_2,\alpha_3)}$, respectively.

\begin{proof}[Proof of Proposition \ref{Prop:7.5.1}]
To prove assertions 1-3 one argues exactly as in the proof of the
analogous assertions of \cite{Weyl},Proposition 4.1.3, (the group
$L_{0\,\bullet,\bullet}^\circ$ there should by replaced with
$L_{0\,\bullet,\bullet}$). Let us prove assertion 4.

Suppose that $\h=\g^\sigma$. Then $\h\oplus\b=\g$,
$\b\cap\h^\perp=\t$. It follows that $B\cap N_G(\h)\subset T$ whence
$A_{G,G/N_G(\h)}\cong T/T\cap H$. The last equality is equivalent to
$T\cap H=L_{0\,G,G/H^{\X-sat}}$ (recall that in the cases in
interest $H$ is spherical whence $H^{\X-sat}=N_G(\h)$). Since the
$B$-orbit of $eH^{\X-sat}$ is dense in $G/H^{\X-sat}$, we see that
$eH$ is contained in the distinguished component.

In cases $2,3$ we get $L_{0\,G,G/H^{\X-sat}}\cong\C^\times$ and the
claim follows from \cite{Weyl}, Proposition 4.1.3.

Below we suppose $H=H^{\X-sat}$ and set
$L_0:=L_{0\,G,G/H^{\X-sat}}$. According to \cite{Weyl}, Proposition
4.3.2, we only need to check that $L_0\subset H$,  $\dim G-\dim
N_G(L_0)=2(\dim H-\dim N_H(L_0))$, and  $N_G(L_0)=N_G(L_0)^\circ
N_{H}(L_0)$.

In case 4 $L_{0}=\{diag(-1,\ldots,-1,1,-1,\ldots,-1)\}$ (here and in
the next case we assume $G=\SO(2n+1)$). Therefore $L_0\subset H$.
Further, $N_G(L_0)\cong\operatorname{S}(\O_{2n}\times\O_1)$,
$N_H(L_0)\cong \operatorname{S}(\O_n\times \O_1)$ whence the two
remaining equalities.

In case 6 $L_0=\{diag(\pm 1,\mp 1,\ldots,-1,1,-1,\ldots,\mp 1,\pm
1)\}$. The nontrivial element of $L_0$ transposes $\GL(n)$-stable
isotropic subspaces whence $L_0\subset H$. Further, $N_G(L_0)\cong
\operatorname{S}(\O_{n+1}\times \O_n)$. On the other hand, embed
$\O_n\cong \O(\Span_\C(e_{2i}))$ into $\O_{2n+1}$ so that $\O_n$
acts on $\Span_\C(e_{2i})$ in the initial way, on $\im\iota$ via
$\iota$ and on the orthogonal complement of the sum of these  two
spaces trivially. Then $\O(n)=N_{H^\circ}(L_0)$ and the required
equalities follow.

In case 7
$L_0=diag(\varepsilon_1,\ldots,\varepsilon_n,\varepsilon_n,\ldots,\varepsilon_1)$,
where $\varepsilon_i\in\{\pm 1\}$ (we consider $G=\Sp_{2n}$).
Clearly, $L_0\subset H$. Further, $N_G(L_0)^\circ\cong \SL_2^n$,
while $N_H(L_0)^\circ$ is a maximal torus of $H$. The groups
$N_G(L_0)/N_G(L_0)^\circ, N_H(L_0)/N_H(L_0)^\circ$ acts on $L_0$ as
the symmetric group on $n$ elements.

In case 11 the group  $L_0$ is generated by $\exp(\pi i
\alpha_1^\vee), \exp(\pi i \alpha_2^\vee)$. The equalities
$\g^{L_0}=D_4, \h^{L_0}\cong\sl_2^3$ hold. Any component of
$N_G(L_0)$ contains an element of $N_G(T)$. It remains to note that
the Weyl group $W(D_4)$ is normal in   $W(\g)$ and
$W(\h)W(D_4)=W(\g)$.
\end{proof}

\section{Algorithm}\label{SECTION_algorithm}
Here we provide an algorithm computing the weight lattice
$\X_{G,X}$, where $X$ is a homogeneous space or an affine
homogeneous vector bundle.

{\it Case 1.} $X=G/H$, where $H$ is a reductive subgroup of $G$ and
$\rank_G(G/H)=\rank(G)$. The lattice $\X_{G,G/H}$ is computed as
indicated in Theorem \ref{Thm:7.0.5}. To compute the distinguished
component of $(G/H)^{L_{0\,G,G/H}}$ we use
Proposition~\ref{Prop:7.5.1}.

{\it Case 2.} Suppose $X=G/H$, where $H$ is a reductive subgroup of
$G$. Using Theorem 1.3, \cite{ranks}, we compute  $\a(\g,\h)$. Then,
applying Proposition 4.1.3 from \cite{Weyl}, we compute a point in
the distinguished component of
$(G/H)^{L_0},L_0:=L_{0\,G,G/H}^\circ$. We may assume that $eH$ lies
in that distinguished component.
 Then applying \cite{Weyl}, Proposition 4.1.2,
we determine the whole distinguished component, which is an affine
homogeneous $\underline{G}/\underline{H}$ with
$\rank_{\underline{G}}(\underline{G}/\underline{H})=\rank\underline{G}$,
$\underline{G}:=N_G(L_0)^\circ/L_0, \underline{H}:=(H\cap
N_G(L_0)^\circ)/L_0$. We know that
$\X_{G,G/H}=\X_{\underline{G},\underline{G}/\underline{H}}$ (see
Proposition \ref{Prop:1.5.3}). The last lattice is computed as in
case 1. A point from the distinguished component of
$(\underline{G}/\underline{H})^{L_{0\,\underline{G},\underline{G}/\underline{H}}}$
lies in the distinguished component of $(G/H)^{L_{0\,G,G/H}}$.

{\it Case 3.} Here $X=G*_HV$ is an affine homogeneous vector bundle
and $\pi:G*_HV\rightarrow G/H$ is the natural projection. Applying
the algorithm of case 2 to $G/H$, we compute the lattice
$\X_{G,G/H}$ and find a point $x$ in the distinguished component of
$(G/H)^{L_{0\,G,G/H}}$. Applying the following algorithm to the
group $L_0:=L_{0\,G,G/H}$ and the $L_0$-module $V:=\pi^{-1}(x)$, we
compute $L_{0\,L_0,V}$.

\begin{Alg}\label{Alg:9.1.1}  Set $G_0=L_0, V_0=V$.
Assume that we have already constructed a pair $(G_i,V_i)$, where
$G_i$ is an almost connected connected subgroup in $G_0$ and $V_i$
is a $G_i$-module. Set $\widetilde{B}_i:=B\cap G_i,
B_i:=\widetilde{B}_i^\circ$. Choose a
$\widetilde{B}_i$-semiinvariant vector $\alpha\in V_i^*$. Put
$V_{i+1}:=(\u_i^-\alpha)^0$, where $\u_i^-$ is a maximal unipotent
subalgebra of $\g_i$ normalized by $T$ and opposite to $\b_i$ and
the superscript $^0$ means the annihilator. Put
$G_{i+1}:=Z_{G_i}(\alpha)$. The group $G_{i+1}$ is almost connected
and $L_{0\,G_i,V_i}=L_{0\,G_{i+1},V_{i+1}}$. Note that $\rank
[\g_{i+1},\g_{i+1}]\leqslant \rank [\g_i,\g_i]$ with the equality
iff $\alpha\in V^{[\g_i,\g_i]}$. Thus if $[\g_i,\g_i]$ acts
non-trivially on $V$, then we may assume that $\rank
[\g_{i+1},\g_{i+1}]< \rank [\g_i,\g_i]$. So $V_k:=V_k^{[\g_k,\g_k]}$
for some $k$. Here $L_{0\,L_0,V}=L_{0\,G_k,V_k}$ coincides with the
inefficiency kernel for the action $G_k:V_k$.
\end{Alg}

By Proposition \ref{Prop:1.5.8}, $L_{0\,G,X}=L_{0\,L_0,V}$.

{\it Case 4.} Suppose $X=G/H$, where $H$ is a nonreductive subgroup
of $G$. We find a parabolic subgroup $Q\subset G$ tamely containing
$H$ by using Algorithm 7.1.2 from \cite{Weyl}. Further, we choose a
Levi subgroup $M\subset Q$ and  $g\in G$ such that $M\cap H$ is a
maximal reudctive subgroup of $H$ and $gQg^{-1}$ is an antistandard
parabolic subgroup and $gMg^{-1}$ is its standard Levi subgroup.
Replace $(Q,M,H)$ with $(gQg^{-1},gMg^{-1},gHg^{-1})$. Put
$X':=Q^-/H$. Using Remark 3.2.8 from \cite{Weyl}, we construct an
$M$-isomorphism of $X'$ with an affine homogeneous vector bundle. By
Proposition \ref{Prop:1.5.4}, $\X_{G,G/H}=\X_{M,X'}$. The last
lattice is computed as in case 3.

\bigskip
{\Small Chair of Higher Algebra, Department of Mechanics and
Mathematics, Moscow State University.

\noindent E-mail address: ivanlosev@yandex.ru}
\end{document}